\providecommand{\U}[1]{\protect\rule{.1in}{.1in}}
\newtheorem{theorem}{Theorem}[section]
\newtheorem{corollary}[theorem]{Corollary}
\newtheorem{lemma}[theorem]{Lemma}
\newtheorem{proposition}[theorem]{Proposition}
\newtheorem{remark}[theorem]{Remark}
\begin{document}
\title[Graham Theorem on Bounded Symmetric Domains]{Graham Theorem on Bounded Symmetric Domains}

\begin{abstract}
Graham Theorem on the unit ball $B_{n}$ in $\mathbb{C}^{n}$ states that every
invariant harmonic function $u\in C^{n}(\overline{B}_{n})$ must be
pluriharmonic in $B_{n}$ \cite{graham19831}. This rigidity phenomenon of
Graham have been studied by many authors ( see, for examples, \cite{graham88},
\cite{li02}, \cite{li10}, etc.) In this paper, we prove that Graham theorem
holds on classical bounded symmetric domains. Which include Type I domains,
Type II domains, Type III domains III(n) with even $n$ and some special Type
IV domains.

\end{abstract}
\keywords{Invariant Harmonic, Bounded Symmetric Domains, Pluriharmonic, Laplace-Beltrami Operator}
\author{Ren-Yu Chen and Song-Ying Li}
\address{\noindent Ren-Yu Chen: School of Mathematics, Tianjin University, Tianjin,
300354, People's Republic of China. \ E-mail: chenry@tju.edu.cn \hspace{4cm}
$\ \ \ \ \ \ \ \ \ \ \ \ \ \ \ \ \ \ \ \ \ \ \ \ \ \ \ \ \ \ \ \ $
$\ \ \ \ \ $ \hspace{0.2cm} ${ }$ \quad Song-Ying Li: Department of
Mathematics, University of California, Irvine, CA 92697-3875, USA. \ E-mail: sli@math.uci.edu}
\thanks{The first author was supported in part by the National Science Foundation of
China (Grant Nos: 11401426, 11371276).}
\thanks{The second author was supported in part by Ming-Jiang Scholar Fund when he was
visiting Fujian Normal University }
\subjclass[2010]{Primary 32A50; Secondary 32T15, 35J70, 35R01}
\maketitle

\section{Introduction}

\noindent Let $\left(  M^{n},g\right)  $ be a compact Riemannian with boundary
$\partial M$ with Riemannian metric $g$. Let $\Delta_{g}$ be the
Laplace-Beltrami operator associated to $g$. We consider the boundary value
problem%
\begin{equation}
\left\{
\begin{array}
[c]{cc}%
\Delta_{g}u=0, & \text{in }M,\\
u=\phi, & \text{on }\partial M.
\end{array}
\right.  \label{e1.1}%
\end{equation}
When $\Delta_{g}$ is uniformly elliptic on $M,$ the boundary value problem
$\left(  \ref{e1.1}\right)  $ is well understood (see, for examples, the books
of Evans \cite{evans10} and Gilbarg and Trudinger \cite{gilbarg83}). When
$\Delta_{g}$ is not uniformly elliptic, the regularity of the solution $u$ of
$\left(  \ref{e1.1}\right)  $ becomes much more complicated. Typical examples
we consider here are manifolds $\left(  M^{n},g\right)  $ with bounded
pseudoconvex domains $M$ in $\mathbb{C}^{n}$ and the Bergman metric $g$ of
$M$. In particular, when $M$ is the unit ball $B_{n}$ in $\mathbb{C}^{n},$ it
is well known from the books of Hua \cite{Hua19630} that%
\begin{equation}
\Delta_{g}=\left(  1-\left\vert z\right\vert ^{2}\right)  \sum_{\alpha
,\beta=1}^{n}\left(  \delta_{\alpha\beta}-z_{\alpha}\bar{z}_{\beta}\right)
\frac{\partial^{2}}{\partial z_{\alpha}\partial\bar{z}_{\beta}}. \label{e1.2}%
\end{equation}
For any $\phi\in C\left(  \partial B_{n}\right)  $, the Dirichlet boundary
value problem $\left(  \ref{e1.1}\right)  $ has a unique solution%
\begin{equation}
u\left(  z\right)  =\int_{\partial B_{n}}P\left(  z,w\right)  \phi\left(
w\right)  d\sigma\left(  w\right)  =\int_{\partial B_{n}}\frac{\left(
1-\left\vert z\right\vert ^{2}\right)  ^{n}}{\left\vert 1-\left\langle
z,w\right\rangle \right\vert ^{2n}}\phi\left(  w\right)  d\sigma\left(
w\right)  . \label{e1.3}%
\end{equation}
When $\phi\in C^{\infty}\left(  \partial B_{n}\right)  $, it was proved by
Graham \cite{graham19831} that the solution $u$ given by $\left(
\ref{e1.3}\right)  $ can be expressed as%
\begin{equation}
u\left(  z\right)  =G\left(  z\right)  +H\left(  z\right)  \left(
1-\left\vert z\right\vert ^{2}\right)  ^{n}\log\left(  1-\left\vert
z\right\vert ^{2}\right)  ,\;\;z\in\mathbb{B}_{n}, \label{e1.4}%
\end{equation}
where $G,H\in C^{\infty}\left(  \overline{B_{n}}\right)  $. When $n=1,$ $H\in
C_{0}^{\infty}\left(  B_{n}\right)  $. However, when $n>1,$ in general,
$H\not \equiv 0$ on $\partial B_{n}$. In particular, when $H=0$ on $\partial
B_{n},$ the following striking theorem was proved by Graham \cite{graham19831}:

\begin{theorem}
\label{t1.1}If $u\in C^{n}\left(  \overline{B_{n}}\right)  $ is invariant
harmonic $\left(  \Delta_{g}u=0\right)  $ in $B_{n},$ then $u$ is
pluriharmonic in $B_{n}.$
\end{theorem}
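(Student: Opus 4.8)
The plan is to prove the theorem by showing that the second-order part of the expansion of $u$ near the boundary is forced to vanish on the characteristic directions, and combine this with the structure of the invariant Laplacian to conclude that the relevant derivatives of $u$ satisfy the $\bar\partial$-closed condition characterizing pluriharmonicity. Concretely, I would first recall from Graham's expansion $\left(\ref{e1.4}\right)$ that the hypothesis $u\in C^{n}(\overline{B_{n}})$ forces the logarithmic term $H(z)(1-|z|^{2})^{n}\log(1-|z|^{2})$ to be $C^{n}$ up to the boundary; since $(1-|z|^{2})^{n}\log(1-|z|^{2})$ fails to be $C^{n}$ at $\partial B_{n}$ unless $H$ vanishes there to sufficient order, the first key step is to show that $u\in C^{n}(\overline{B_{n}})$ implies $H\equiv 0$ on $\partial B_{n}$ — in fact $H$ vanishes to order high enough that $u=G\in C^{\infty}(\overline{B_{n}})$ in a collar of the boundary in the sense needed below.

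**Next I would** exploit the invariance of the equation under $\operatorname{Aut}(B_{n})$. Since $u\circ\varphi$ solves $\left(\ref{e1.1}\right)$ whenever $u$ does and $\varphi$ is an automorphism, and since $C^{n}$ regularity is preserved by these (smooth) maps, it suffices to understand the behavior of $u$ at a single boundary point, say $e_{1}=(1,0,\dots,0)$, and then transport the conclusion. Writing $\rho(z)=1-|z|^{2}$ and using $\left(\ref{e1.2}\right)$, near $e_{1}$ the equation $\Delta_{g}u=0$ becomes, after dividing by $\rho$,
\begin{equation}
\sum_{\alpha,\beta=1}^{n}\left(\delta_{\alpha\beta}-z_{\alpha}\bar z_{\beta}\right)\frac{\partial^{2}u}{\partial z_{\alpha}\partial\bar z_{\beta}}=0 .
\end{equation}
I would then differentiate this relation repeatedly in the tangential/complex-tangential directions and evaluate the resulting identities at the boundary, using the $C^{n}$ hypothesis to make sense of all the terms. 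The coefficient $\delta_{\alpha\beta}-z_{\alpha}\bar z_{\beta}$ degenerates precisely in the normal complex direction at $e_{1}$, so iterating this scheme should produce, order by order, vanishing of the full complex Hessian $\partial_{\alpha}\partial_{\bar\beta}u$ on $\partial B_{n}$, and a continuation argument (again via automorphisms, moving the point into the interior) should upgrade this to $\partial_{\alpha}\partial_{\bar\beta}u\equiv 0$ throughout $B_{n}$, which is exactly pluriharmonicity.

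**The main obstacle** I anticipate is the bookkeeping in this induction: making the degenerate equation yield exactly $n$ independent scalar constraints requires carefully choosing which mixed derivatives to take and in what order, and the $C^{n}$ (rather than $C^{\infty}$) hypothesis is sharp, so one cannot afford to differentiate one time too many — the argument must be arranged so that the $k$-th step uses only derivatives of total order $\le n$. A secondary delicate point is passing from "$u$ has no log term" (step one) to genuine $C^{n}$-control of $G$ near the boundary: one must verify that subtracting the log term does not destroy regularity, i.e. that $G$ inherits the $C^{n}$ bound. I would handle the first difficulty by introducing the holomorphic tangential vector fields $L_{j}=\bar z_{1}\partial_{z_{j}}-\bar z_{j}\partial_{z_{1}}$ and the real normal field, expressing the invariant Laplacian in this frame, and reading off the recursion on the boundary jet of $u$; I would handle the second by a direct estimate on the remainder in Graham's expansion. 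Finally, I would assemble these pieces: $C^{n}$ regularity kills the log term, the degenerate equation on the boundary plus automorphism invariance kills the mixed Hessian, and vanishing mixed Hessian on a domain means $u$ is pluriharmonic.
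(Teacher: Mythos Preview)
The paper does not itself prove Theorem~\ref{t1.1}; it cites Graham~\cite{graham19831} for the result and only reproduces Graham's method in Section~3 when proving the analogous Theorem~\ref{t1.3}. That method is quite different from yours: one expands $u|_{\partial B_{n}}=\sum_{p,q}f_{p,q}$ in bigraded spherical harmonics, seeks the invariant-harmonic extension of each $f_{p,q}$ in the separated form $h_{p,q}(|z|^{2})f_{p,q}(z)$, and finds that $h_{p,q}$ solves a hypergeometric ODE. The explicit hypergeometric solution has a term $(1-t)^{n}\log(1-t)$ with nonzero coefficient whenever $pq\neq 0$, so $C^{n}$ regularity of $u$ forces $f_{p,q}=0$ for all $pq\neq 0$, i.e.\ $u$ is pluriharmonic. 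No boundary-jet induction is used.

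Your proposal has two genuine gaps. First, the ``continuation argument via automorphisms, moving the point into the interior'' cannot work as stated: automorphisms of $B_{n}$ send $\partial B_{n}$ to $\partial B_{n}$, so establishing $\partial_{\alpha}\partial_{\bar\beta}u=0$ on the boundary does not transport to interior points. What \emph{would} work (cf.\ Proposition~\ref{p4.1}) is to show the complex Hessian vanishes at a single \emph{interior} point, say $0$, and then use transitivity --- but your boundary scheme does not produce that. Second, the inductive scheme itself is underspecified and likely insufficient: after dividing by $\rho$, the equation at a boundary point gives one scalar relation among the $\partial_{\alpha}\partial_{\bar\beta}u$; taking tangential derivatives yields relations among \emph{tangential derivatives} of the Hessian, not new independent pointwise constraints on the Hessian itself. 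You have not indicated how to extract enough independent relations to kill the full Hessian, and the sharp $C^{n}$ budget leaves no room to over-differentiate. The hypergeometric approach sidesteps both issues by working globally (via the spherical harmonic decomposition) rather than locally at a boundary point.
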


Problem about whether Graham's Phenomenon holds for more general domains $M$
and more general metric $g$ has been studied by several authors. For examples,
Graham and Lee \cite{graham88} studied the problem for $M$ being strictly
pseudoconvex domains in $\mathbb{C}^{n}$ with smooth boundaries and K\"{a}hler
metrics $g$ satisfying special symmetric property. In particular, they gave a
characterization of CR-pluriharmonic functions on $\partial M,$ which is a
fundamental paper in the theory of the pseudo-Hermitian CR geometry. Li and
Simon \cite{li02} proved a Graham type theorem for the polydisc in
$\mathbb{C}^{n}$ with Bergman type metrics. In general, Graham's phenomenon
fails in $B_{n}$ with rotationally symmetric metrics when $n>2,$
counterexample was constructed by Graham and Lee \cite{graham88}. Further
information along this direction can be found in Li and Wei \cite{li10}. For
more results on invariant harmonic functions and backgrounds we refer the
reader to \cite{ahern96}, \cite{graham19832}, \cite{krantz92}, \cite{li00},
\cite{liu04}, \cite{Rudin80}. However, the problem about whether Graham
Theorem holds when $M$ is a classical bounded symmetric domain with Bergman
metric $g$ is widely open. The main purpose of the paper is to investigate
this problem.

For positive integers $m\leq n,$ denoted by $M^{m,n}\left(  \mathbb{C}\right)
$ the set of all $m\times n$ matrices with entries in $\mathbb{C}$. The
classical bounded symmetric domains \cite{Hua19630} are the following four
types:%
\begin{equation}
\mathbf{I}\left(  m,n\right)  =\left\{  \mathbf{z}_{{}}\in M^{m,n}\left(
\mathbb{C}\right)  :I_{m}-\mathbf{z}_{{}}\mathbf{z}_{{}}^{\ast}>0\right\}  ,
\label{e1.5}%
\end{equation}%
\begin{equation}
\mathbf{II}\left(  n\right)  =\left\{  \mathbf{z}_{{}}\in\mathbf{I}\left(
n,n\right)  :\mathbf{z}_{{}}^{t}=\mathbf{z}_{{}}\right\}  , \label{ee1.6}%
\end{equation}%
\begin{equation}
\mathbf{III}\left(  n\right)  =\left\{  \mathbf{z}_{{}}\in\mathbf{I}\left(
n,n\right)  :\mathbf{z}_{{}}^{t}=-\mathbf{z}_{{}}\right\}  \label{e1.6}%
\end{equation}
and%
\begin{equation}
\mathbf{IV}\left(  n\right)  =\left\{  z\in\mathbb{C}^{n}:2\left\vert
z\right\vert ^{2}-\left\vert z_{{}}z^{t}\right\vert ^{2}-1<0\text{ and
}\left\vert z_{{}}z^{t}\right\vert ^{2}<1\right\}  . \label{e1.7}%
\end{equation}
Denoted by $D_{1}\cong D_{2}$ if $D_{1}$ and $D_{2}$ are biholomorphic
equivalent. It is known from Lu \cite{lu1963} and Loos \cite{loos1977} that%
\begin{equation}
\mathbf{IV}\left(  2\right)  \cong\mathbf{IV}\left(  1\right)  \times
\mathbf{IV}\left(  1\right)  ,\text{\ }B_{3}=\mathbf{I}\left(  1,3\right)
\cong\mathbf{III}\left(  3\right)  , \label{e1.8}%
\end{equation}
and%
\begin{equation}
\mathbf{II}\left(  2\right)  \cong\mathbf{IV}\left(  3\right)  ,\;\mathbf{I}%
\left(  2,2\right)  \cong\mathbf{IV}\left(  4\right)  \text{\ and
}\mathbf{III}\left(  4\right)  \cong\mathbf{IV}\left(  6\right)  .
\label{e1.9}%
\end{equation}
Let $D$ be a bounded domain with the Bergman metric $g^{D}$ of $D$. Let
$\Delta_{g^{D}}$ denote the Laplace-Beltrami operator associated to $g^{D}.$
Since the Bergman metric is biholomorphic invariant, we say that a function
$u$ is invariant harmonic in $D$ if%
\begin{equation}
\Delta_{g^{D}}u\left(  \mathbf{z}_{{}}\right)  =0,\;\;\mathbf{z}_{{}}\in D.
\label{e1.10}%
\end{equation}
When $D$ is a bounded symmetric domain, $\Delta_{g^{D}}$ is called Hua
operator. We use the following notations for Hua operators:%
\begin{equation}
\Delta_{1}=\Delta_{g^{\mathbf{I}\left(  m,n\right)  }},\Delta_{2}%
=\Delta_{g^{\mathbf{II}\left(  n\right)  }},\Delta_{3}=\Delta_{g^{\mathbf{III}%
\left(  n\right)  }}\text{ and }\Delta_{4}=\Delta_{g^{\mathbf{IV}\left(
n\right)  }}. \label{e1.11}%
\end{equation}
Denoted by $\mathcal{U}\left(  D\right)  $ the \v{S}ilov boundary or the
characteristic boundary of $D$. For any $\phi\in C\left(  \mathcal{U}\left(
D\right)  \right)  $, it was proved by Hua \cite{Hua19630}, the boundary value
problem $\left(  \ref{e1.1}\right)  $ has the unique solution%
\begin{equation}
u\left(  \mathbf{z}_{{}}\right)  =\int_{\mathcal{U}\left(  D\right)  }%
P^{D}\left(  \mathbf{z},\mathbf{w}\right)  \phi\left(  \mathbf{w}\right)
d\sigma\left(  \mathbf{w}\right)  , \label{e1.12}%
\end{equation}
where $P^{D}\left(  \mathbf{z}_{{}},\mathbf{w}_{{}}\right)  $ is the
Poisson-Szeg\"{o} kernel given by%
\begin{equation}
P^{D}\left(  \mathbf{z}_{{}},\mathbf{w}_{{}}\right)  =\frac{\left(
\det\left(  I_{m}-\mathbf{z}_{{}}\mathbf{z}_{{}}^{\ast}\right)  \right)
^{\kappa\left(  D\right)  }}{\left\vert \det\left(  I_{m}-\mathbf{z}_{{}%
}\mathbf{\mathbf{w}_{{}}^{\ast}}\right)  \right\vert ^{2\kappa\left(
D\right)  }} \label{e1.13}%
\end{equation}
and%
\begin{equation}
\kappa\left(  D\right)  =\left\{
\begin{array}
[c]{ll}%
n, & \text{if }D=\mathbf{I}\left(  m,n\right)  ;\\
\frac{n+1}{2}, & \text{if }D=\mathbf{II}\left(  n\right)  ;\\
\frac{n-1}{2}, & \text{if }D=\mathbf{III}\left(  n\right)  ,\;n\text{ is
even;}\\
\frac{n}{2} & \text{if }D=\mathbf{III}\left(  n\right)  ,\text{\ }n\;\text{is
odd.}%
\end{array}
\right.  \label{e1.14}%
\end{equation}

The main results of the paper are the following theorem.

\begin{theorem}
\label{t1.2}Let $m,n\in\mathbb{N}$ and $m\leq n.$

\begin{enumerate}
\item[(i)] If $u\in C^{n}\left(  \overline{\mathbf{I}\left(  m,n\right)
}\right)  $ is invariant harmonic then $u$ is pluriharmonic;

\item[(ii)] If $n$ is odd and if $u\in C^{\frac{n+1}{2}}\left(  \overline
{\mathbf{II}\left(  n\right)  }\right)  $ is invariant harmonic then $u$ is pluriharmonic;

\item[(iii)] If $n=2k$ is even and if there is a $\alpha>1/2$ such that $u\in
C^{k,\alpha}\left(  \overline{\mathbf{II}\left(  n\right)  }\right)  $ is
invariant harmonic then $u$ is pluriharmonic;

\item[(iv)] If $n$ is even and if $u\in C^{n-1}\left(  \overline
{\mathbf{III}\left(  n\right)  }\right)  $ is invariant harmonic then $u$ is pluriharmonic.
\end{enumerate}
\end{theorem}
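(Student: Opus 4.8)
The plan is to derive cases (i) and (iv) from Graham's Theorem~\ref{t1.1} by restricting $u$ to a carefully chosen totally geodesic ball sub-domain, and to obtain cases (ii) and (iii) by a direct asymptotic analysis of the Poisson--Szeg\"{o} integral near the \v{S}ilov boundary, in the spirit of \cite{graham19831}. In every case I would first note that, since $u\in C^{\kappa}(\overline{D})\subset C(\overline{D})$ satisfies $\Delta_{g^{D}}u=0$ in $D$, the function $u$ is a solution of $(\ref{e1.1})$ with \v{S}ilov data $\phi:=u|_{\mathcal{U}(D)}\in C(\mathcal{U}(D))$; the uniqueness asserted in Hua's theorem (cf.\ $(\ref{e1.12})$) then forces $u(\mathbf{z})=\int_{\mathcal{U}(D)}P^{D}(\mathbf{z},\mathbf{w})\phi(\mathbf{w})\,d\sigma(\mathbf{w})$. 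Since a pluriharmonic function is automatically invariant harmonic, the assertion of Theorem~\ref{t1.2} is equivalent to showing that $\phi$ extends pluriharmonically to $D$, its pluriharmonic extension then coinciding with $u$ by the same uniqueness.

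For $D=\mathbf{I}(m,n)$ and a unit vector $a\in\mathbb{C}^{m}$ I would use the slice $S_{a}=\{\,a\xi : \xi\in\mathbb{C}^{1\times n},\ |\xi|<1\,\}$ of rank-one matrices ($a$ a column vector, $\xi$ a row vector), which is a totally geodesic copy of $B_{n}$ in $\mathbf{I}(m,n)$ (the fixed-point set of the holomorphic isometry $\mathbf{z}\mapsto R\mathbf{z}$, with $R=2aa^{\ast}-I_{m}$ the reflection of $\mathbb{C}^{m}$ fixing $a$). A direct computation with rank-one perturbations of the identity gives, for $\mathbf{z}=a\xi$, the identities $\det(I_{m}-\mathbf{z}\mathbf{z}^{\ast})=1-|\xi|^{2}$ and $\det(I_{m}-\mathbf{z}\mathbf{w}^{\ast})=1-\langle\xi,\eta(\mathbf{w})\rangle$, where $\eta(\mathbf{w})\in\mathbb{C}^{n}$ has unit length whenever $\mathbf{w}\in\mathcal{U}(\mathbf{I}(m,n))$. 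Since $\kappa(\mathbf{I}(m,n))=n$, the kernel $P^{\mathbf{I}(m,n)}$ restricted to $S_{a}$ is, for each $\mathbf{w}$, the ball Poisson--Szeg\"{o} kernel $\xi\mapsto(1-|\xi|^{2})^{n}/|1-\langle\xi,\eta(\mathbf{w})\rangle|^{2n}$ of $(\ref{e1.3})$; hence $\xi\mapsto u(a\xi)$ is a superposition of invariant-harmonic functions of $\xi$, so it is invariant harmonic on $B_{n}$, and it lies in $C^{n}(\overline{B_{n}})$ because $u\in C^{n}(\overline{\mathbf{I}(m,n)})$. Theorem~\ref{t1.1} then makes $\xi\mapsto u(a\xi)$ pluriharmonic; writing $z_{ij}=a_{i}\xi_{j}$ this gives $\sum_{i,k}a_{i}\overline{a_{k}}\,(\partial^{2}u/\partial z_{ip}\partial\bar{z}_{kq})(0)=0$ for all $p,q$, and letting $a$ run over all unit vectors of $\mathbb{C}^{m}$ forces the complex Hessian $(\partial^{2}u/\partial z_{\alpha}\partial\bar{z}_{\beta})$ of $u$ to vanish at $0$. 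For $D=\mathbf{III}(n)$ with $n$ even the same scheme applies with the totally geodesic slice $S=\{\,\zeta\wedge e_{n} : \zeta\in\mathbb{C}^{n-1},\ |\zeta|<1\,\}\cong B_{n-1}$ (the fixed-point set of $\mathbf{z}\mapsto U\mathbf{z}U^{t}$ with $U=\mathrm{diag}(i,\dots,i,-i)$); the key point here is the perfect-square factorizations $\det(I_{n}-(\zeta\wedge e_{n})(\zeta\wedge e_{n})^{\ast})=(1-|\zeta|^{2})^{2}$ and $\det(I_{n}-(\zeta\wedge e_{n})\mathbf{w}^{\ast})=(1-\langle\zeta,\eta(\mathbf{w})\rangle)^{2}$ with $|\eta(\mathbf{w})|=1$ on $\mathcal{U}(\mathbf{III}(n))$, so that raising to the power $\kappa(\mathbf{III}(n))=(n-1)/2$ produces precisely the $B_{n-1}$ Poisson--Szeg\"{o} kernel, for which $\kappa(B_{n-1})=n-1$. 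Thus $\zeta\mapsto u(\zeta\wedge e_{n})\in C^{n-1}(\overline{B_{n-1}})$ is invariant harmonic, hence pluriharmonic by Theorem~\ref{t1.1}; varying $S$ over its orbit under $\mathbf{z}\mapsto U\mathbf{z}U^{t}$, $U$ unitary, again forces the complex Hessian of $u$ to vanish at $0$. (This squaring of the defining determinant is exactly why the hypothesis in (iv) is $C^{n-1}$ rather than merely $C^{(n-1)/2}$.) Finally, for any $\varphi\in\mathrm{Aut}(D)$ the function $u\circ\varphi$ satisfies the same hypotheses, so its complex Hessian vanishes at $0$; since $\varphi$ is holomorphic, the chain rule identifies this matrix with $D\varphi(0)^{t}\,H(\varphi(0))\,\overline{D\varphi(0)}$, where $H=(\partial^{2}u/\partial z_{\alpha}\partial\bar{z}_{\beta})$ and $D\varphi(0)$ is invertible, so $H(\varphi(0))=0$. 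Transitivity of $\mathrm{Aut}(D)$ on $D$ then gives $\partial\bar\partial u\equiv0$, i.e.\ $u$ is pluriharmonic.

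For $\mathbf{II}(n)$ this slicing argument is not available, since $\mathbf{II}(n)$ is of tube type of rank $n$ and its totally geodesic ball sub-domains are only discs, none with $\kappa$-exponent equal to $\kappa(\mathbf{II}(n))=(n+1)/2$. Instead I would analyze the integral $u=\int_{\mathcal{U}(\mathbf{II}(n))}P^{\mathbf{II}(n)}(\cdot,\mathbf{w})\phi(\mathbf{w})\,d\sigma(\mathbf{w})$ directly, following Graham: expand $u$ near $\mathcal{U}(\mathbf{II}(n))$ in powers of $\det(I_{n}-\mathbf{z}\mathbf{z}^{\ast})$ as in $(\ref{e1.4})$, the obstruction being a logarithmic term whose order is governed by $\kappa(\mathbf{II}(n))=(n+1)/2$, and then show that the regularity hypothesis annihilates it. When $n$ is odd, $(n+1)/2\in\mathbb{N}$ and $u\in C^{(n+1)/2}(\overline{\mathbf{II}(n)})$ kills the term exactly as in the ball case; when $n=2k$, the model singularity is $t^{\,k+1/2}\log t$, which lies in no $C^{k,\alpha}$ with $\alpha>1/2$, which is precisely why the hypothesis in (iii) is $C^{k,\alpha}$ with $\alpha>1/2$. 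Once the logarithmic term and the relevant boundary Taylor coefficients of $u$ are shown to vanish, Graham's recursion relating the boundary jets of $u$ to the tangential Cauchy--Riemann equations, transcribed to the matrix variables of $\mathbf{II}(n)$, shows that $\phi$ is CR-pluriharmonic, and the preliminary reduction of the first paragraph finishes the proof.

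The slicing identities of the second paragraph and the group-theoretic globalization are routine; the part I expect to be the main obstacle is the direct analysis for $\mathbf{II}(n)$. Unlike $\partial B_{n}$, the \v{S}ilov boundary of $\mathbf{II}(n)$ is a lower-dimensional singular stratum of the topological boundary along which $\det(I_{n}-\mathbf{z}\mathbf{z}^{\ast})$ degenerates in a genuinely multidimensional fashion, so pinning down the exact order and the coefficient of the logarithmic term in the Poisson--Szeg\"{o} expansion, and then carrying Graham's recursion through in that setting, is where the substantive work lies; together with the perfect-square phenomenon for $\mathbf{III}(n)$, this is also what fixes the precise regularity thresholds appearing in (ii), (iii), and (iv).
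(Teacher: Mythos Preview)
Your treatment of (i) and (iv) is essentially the paper's argument: the same rank-one slice $\xi\mapsto a\xi$ for $\mathbf{I}(m,n)$ and the same ``first row/column'' slice for $\mathbf{III}(n)$, followed by Graham's Theorem~\ref{t1.1} and the transitivity globalization. The one difference is mechanism: the paper shows the slice function is invariant harmonic on the ball by invoking the system $\Delta_{\nu}^{jk}u=0$ (Theorem~\ref{t2.2}), whereas you deduce it from the pretty observation that the Poisson--Szeg\"{o} kernel, restricted to the slice, becomes the ball Poisson--Szeg\"{o} kernel. Both routes are valid; yours avoids Theorem~\ref{t2.2} for Types I and III but uses the integral representation instead.

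For (ii) and (iii), however, there is a genuine gap. You dismiss slicing on $\mathbf{II}(n)$ and propose a direct Graham-style expansion on $\mathbf{II}(n)$ itself, but leave this completely undeveloped (and the model singularity you quote, $t^{k+1/2}\log t$, is not right: in the half-integer case the leading non-smooth term is $(1-t)^{k+1/2}$ times a hypergeometric factor, with no logarithm). The paper's key idea, which you miss, is that slicing \emph{does} work for $\mathbf{II}(n)$ once one has the system of equations: by Theorem~\ref{t2.2}(ii), $\Delta_{2}u=0$ forces $\Delta_{2}^{jk}u=0$ for all $j,k$, and then the quadratic embedding $\lambda\mapsto(\lambda U)^{t}(\lambda U)$ of $B_{n}$ into $\mathbf{II}(n)$ carries $u$ to a function $v$ on $B_{n}$ satisfying not the Bergman Laplacian but the modified operator
\[
\tilde{\Delta}v=\sum_{j,k=1}^{n}\bigl(\delta_{jk}-|\lambda|^{2}\lambda_{j}\bar{\lambda}_{k}\bigr)\frac{\partial^{2}v}{\partial\lambda_{j}\partial\bar{\lambda}_{k}}=0.
\]
The substantive work then reduces to a Graham-type theorem for $\tilde{\Delta}$ on $B_{n}$ (Theorem~\ref{t1.3}), which is a tractable one-variable hypergeometric analysis yielding exactly the thresholds $C^{(n+1)/2}$ and $C^{k,\alpha}$, $\alpha>1/2$. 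Your kernel-restriction device cannot produce this, since along the quadratic slice $\det V(\mathbf{z}(\lambda))=1-|\lambda|^{4}$ and $\det W(\mathbf{z}(\lambda),\mathbf{w})=1-\lambda\mathbf{w}^{\ast}\lambda^{t}$, so the restricted kernel is \emph{not} a ball Poisson--Szeg\"{o} kernel; it is precisely the system $\Delta_{2}^{jk}u=0$ that rescues the slicing strategy. Without that ingredient (and the companion Theorem~\ref{t1.3}), your plan for (ii)--(iii) has no concrete content.
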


\noindent\textbf{Remark} We note that the above smoothness assumptions are
sharp. The condition $C^{k,\alpha}$ with $\alpha>1/2$ in Part (iii) is the
same as ${\frac{n+1}{2}}+\epsilon$ with $\epsilon>0$. We need to add
$\epsilon$ in Part (iii) rather that ${\frac{n+1}{2}}$ in Part (ii) because
${\frac{n+1}{2}}$ is not integer when $n$ is even.

\vspace{0.3cm}

In order to prove Parts (ii) and (iii) of Theorem \ref{t1.2}, one of our key
steps is to prove the following theorem in the unit ball.

\begin{theorem}
\label{t1.3}Let%
\begin{equation}
\tilde{\Delta}:=\sum_{j,k=1}^{n}\left(  \delta_{jk}-\left\vert z\right\vert
^{2}z_{j}\bar{z}_{k}\right)  \frac{\partial^{2}}{\partial z_{j}\partial\bar
{z}_{k}}. \label{e1.22}%
\end{equation}
Then the following two statements hold.

\begin{enumerate}
\item[(i)] If $n$ is odd and $u\in C^{\frac{n+1}{2}}\left(  \overline{B_{n}%
}\right)  $ satisfying $\tilde{\Delta}u=0$ in $B_{n},$ then $u$ is
pluriharmonic in $B_{n};$

\item[(ii)] If $n=2k$ is even and if $u\in C^{k,\alpha}\left(  \partial
B_{n}\right)  $ for some $\alpha>1/2,$ then $u$ is pluriharmonic in $B_{n}.$
\end{enumerate}
\end{theorem}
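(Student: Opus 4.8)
The plan is to mimic Graham's original argument for the Laplace--Beltrami operator on $B_n$, but now applied to the model operator $\tilde{\Delta}$, whose characteristic boundary behaviour produces an expansion involving the factor $(1-|z|^2)^{\kappa}\log(1-|z|^2)$ with $\kappa = \frac{n+1}{2}$ rather than $\kappa=n$. First I would pass to a boundary function: given $u\in C^{\frac{n+1}{2}}(\overline{B_n})$ (resp. $C^{k,\alpha}(\partial B_n)$) with $\tilde{\Delta}u=0$, set $\phi = u|_{\partial B_n}$ and let $v$ be the solution of the Dirichlet problem for $\tilde{\Delta}$ with data $\phi$; since the Dirichlet problem for this elliptic-degenerate operator is uniquely solvable on $\overline{B_n}$ for continuous data, $u=v$. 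The key analytic input is an asymptotic expansion, analogous to \eqref{e1.4}: for smooth $\phi$ one writes
\begin{equation}
v(z)=G(z)+H(z)\,(1-|z|^2)^{\kappa}\log(1-|z|^2),\qquad \kappa=\tfrac{n+1}{2},
\end{equation}
with $G,H$ as smooth as the data allows, $G$ reproducing $\phi$ on the boundary and $H|_{\partial B_n}$ an explicit (up to nonzero constant) obstruction operator applied to $\phi$. The first main step is therefore to derive this expansion and to identify the obstruction, i.e. to show $H|_{\partial B_n}=c\,\mathcal{L}\phi$ for a suitable tangential operator $\mathcal{L}$ and a nonzero constant $c$.

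The second step exploits the hypothesis on smoothness: the term $(1-|z|^2)^{\kappa}\log(1-|z|^2)$ fails to be $C^{\kappa}$ up to the boundary — more precisely it lies in $C^{\kappa-1,\alpha}$ for every $\alpha<1$ but not in $C^{\kappa-1,1}$, and when $\kappa=\frac{n+1}{2}$ is an integer (the odd-$n$ case) it is not $C^{\kappa}$. Hence if $u=v\in C^{\frac{n+1}{2}}(\overline{B_n})$ in the odd case, or $u\in C^{k,\alpha}(\partial B_n)$ with $\alpha>\frac12$ in the even case $n=2k$ (so $\kappa-1 = k-\frac12$... here one matches against the half-integer exponent and the $\alpha>1/2$ threshold is exactly what forces $H|_{\partial B_n}=0$ rather than merely bounding it), the coefficient $H$ must vanish on $\partial B_n$. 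This reduces matters to showing: if $\mathcal{L}\phi=0$ on $\partial B_n$ then the solution $u$ is pluriharmonic. The third step is to analyze the kernel of $\mathcal{L}$ on the sphere: expanding $\phi$ in $U(n)$-spherical harmonics $\phi=\sum_{p,q}\phi_{p,q}$ (bidegree $(p,q)$ in $z,\bar z$), the operator $\mathcal{L}$ acts as multiplication by an explicit eigenvalue $\lambda_{p,q}$, and one shows $\lambda_{p,q}=0$ forces $p=0$ or $q=0$; the corresponding solution of $\tilde{\Delta}u=0$ with boundary data a sum of pure-type harmonics is holomorphic plus antiholomorphic, hence pluriharmonic.

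The main obstacle I anticipate is the first step — obtaining the precise boundary expansion for $\tilde{\Delta}$ and computing the obstruction operator $\mathcal{L}$ together with its eigenvalues $\lambda_{p,q}$. For the genuine Laplace--Beltrami operator \eqref{e1.2} this is classical (Graham), but $\tilde{\Delta}$ in \eqref{e1.22} has the extra $|z|^2$ weight in the degenerate direction, so the indicial equation governing the exponents of $(1-|z|^2)$ is different and must be recomputed; I expect the resolution to be to work in terms of the variable $t=1-|z|^2$ and the radial/tangential decomposition adapted to the $U(n)$-action, plugging a formal series $\sum_j a_j(\omega)t^j + \log t\sum_j b_j(\omega)t^{j+\kappa}$ into $\tilde{\Delta}u=0$, and reading off recursions; the half-integer value $\kappa=\frac{n+1}{2}$ is what makes the two series collide and generates the log term precisely at order $\kappa$. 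Once $\mathcal{L}$ and its spectrum are in hand, the regularity obstruction and the pure-type conclusion follow as in the ball case; tracking the sharp Hölder threshold $\alpha>1/2$ in part (ii) will require a little care with the modulus of continuity of $t^{k-1/2}\log t$.
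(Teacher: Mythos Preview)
Your overall strategy---expand the boundary value in bigraded spherical harmonics $f_{p,q}$, solve a radial ODE for each piece, and use the regularity hypothesis to kill every component with $pq\neq 0$---is exactly what the paper does. But two points of your execution diverge from the paper, and one of them is a genuine error.

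First, the paper does not set up an abstract Graham--Lee style expansion with an undetermined obstruction operator $\mathcal{L}$; it works directly, one harmonic at a time, and seeks the solution in separated form $u=h(|z|^4)f_{p,q}(z)$. The computational trick you are missing is the radial variable $t=|z|^4$ rather than $t=1-|z|^2$: with this substitution the equation $\tilde{\Delta}\bigl(h(|z|^4)f_{p,q}\bigr)=0$ becomes \emph{exactly} the Gauss hypergeometric equation
\[
t(1-t)h''(t)+\Bigl[\tfrac{p+q+n+1}{2}-\bigl(\tfrac{p}{2}+\tfrac{q}{2}+1\bigr)t\Bigr]h'(t)-\tfrac{p}{2}\cdot\tfrac{q}{2}\,h(t)=0,
\]
so that $h(t)=F\bigl(\tfrac{p}{2},\tfrac{q}{2},\tfrac{p+q+n+1}{2};t\bigr)$ up to normalisation. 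What you call the eigenvalue $\lambda_{p,q}$ is then read off from the classical connection formula for $F$ at $t=1$, and its nonvanishing for $pq\neq 0$ is immediate from the Gamma-function expressions. This is considerably more elementary than developing the full indicial/formal-series machinery.

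Second, and this is where your proposal actually goes wrong: your expansion ansatz with a $\log$ term is correct only for odd $n$. The relevant parameter is $c-a-b=\tfrac{n+1}{2}$. When $n$ is odd this is a positive integer $m$ and the connection formula at $t=1$ indeed yields a term $(1-t)^m\log(1-t)$ with nonzero coefficient, exactly as you describe. But when $n=2k$ is even, $c-a-b=k+\tfrac12$ is a \emph{half-integer}, the two Frobenius solutions at $t=1$ do not resonate, and there is \emph{no logarithm} at all: the singular part is $c\,(1-t)^{k+1/2}G_1(t)$ with $G_1$ analytic on $[0,1]$ and $c\neq 0$ when $pq\neq 0$. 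The $\alpha>\tfrac12$ threshold in part~(ii) comes from this fractional power (the $k$-th derivative behaves like $(1-t)^{1/2}$, which is $C^{0,1/2}$ but not $C^{0,\alpha}$ for $\alpha>\tfrac12$), not from any term of the form $t^{k-1/2}\log t$. Your formal series $\sum a_j t^j+\log t\sum b_j t^{j+\kappa}$ with half-integer $\kappa$ would never produce this behaviour, so as written your argument for the even case does not close.
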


\begin{remark}
We point out here that the operator $\tilde{\Delta}$ is not included in
$\Delta_{g}$ with $g$ is rotation symmetric metrics in Graham and Lee
\cite{graham88} or Li and Wei \cite{li10}.
\end{remark}

The paper is organized as follows. In section 2, we study the fundamental
properties of the Poisson-Szeg\"{o} kernels, we will prove that they satisfy a
system of differential equations. We will prove Theorem \ref{t1.3} in Section
3. As applications of results in Section 2, Grahams's theorem and Theorem
\ref{t1.3}, we will prove Theorem \ref{t1.2} in Section 4. Finally, in Section
5, we will prove Graham's phenomenon fails on $\mathbf{IV}\left(  2\right)  $
and will give some remarks on the problem over $\mathbf{III}\left(  3\right)
$ and $\mathbf{IV}\left(  4\right)  $.

\section{System of Differential Equations}

\noindent Let $\Delta_{1},\Delta_{2},\Delta_{3}$ and $\Delta_{4}$ denote Hua
operators, the Laplace-Beltrami operators associated to the Bergman metrics on
the classical bounded symmetric domains $\mathbf{I}\left(  m,n\right)  $,
$\mathbf{II}\left(  n\right)  $, $\mathbf{III}\left(  n\right)  $ and
\ $\mathbf{IV}\left(  n\right)  $, respectively. According to the books of Hua
\cite{Hua19630} and Lu \cite{lu1963}, the following proposition holds.

\begin{proposition}
\label{p2.1}For $\mathbf{z}\in M^{m,n}\left(  \mathbb{C}\right)  $ and
$V\left(  \mathbf{z}\right)  =I_{m}-\mathbf{z}\mathbf{z}^{\ast}$, we let%
\begin{equation}
V_{jk}=\left[  V\left(  \mathbf{z}\right)  \right]  _{jk}=\delta_{jk}%
-\sum_{\ell=1}^{n}z_{j\ell}\bar{z}_{k\ell}. \label{e2.1}%
\end{equation}
Then the Hua operators are given by%
\begin{equation}
\Delta_{1}=\sum_{j,k=1}^{m}V_{jk}\Delta_{1}^{jk},\;\;\Delta_{1}^{jk}%
:=\sum_{\alpha,\beta=1}^{n}\left(  \delta_{\alpha\beta}-\sum_{\ell=1}%
^{m}z_{\ell\alpha}\bar{z}_{\ell\beta}\right)  \frac{\partial^{2}}{\partial
z_{j\alpha}\partial\bar{z}_{k\beta}}; \label{e2.2}%
\end{equation}%
\begin{equation}
\Delta_{2}=\frac{1}{4}\sum_{j,k=1}^{n}V_{jk}\Delta_{2}^{jk},\;\;\Delta
_{2}^{jk}:=\sum_{\alpha,\beta=1}^{n}\frac{V_{\alpha\beta}}{\left(
1-\delta_{j\alpha}/2\right)  \left(  1-\delta_{k\beta}/2\right)  }%
\frac{\partial^{2}}{\partial z_{j\alpha}\partial\bar{z}_{k\beta}};
\label{e2.3}%
\end{equation}%
\begin{equation}
\Delta_{3}=\frac{1}{4}\sum_{j,k=1}^{n}V_{jk}\Delta_{3}^{jk},\;\;\Delta
_{3}^{jk}:=\sum_{\alpha,\beta=1}^{n}V_{\alpha\beta}\left(  1-\delta_{j\alpha
}\right)  \left(  1-\delta_{k\beta}\right)  \frac{\partial^{2}}{\partial
z_{j\alpha}\partial\bar{z}_{k\beta}} \label{e2.4}%
\end{equation}
and
\begin{equation}
\Delta_{4}=\sum_{j,k=1}^{n}\left[  r\left(  z\right)  \left(  \delta
_{jk}-2z_{j}\bar{z}_{k}\right)  +2\left(  \bar{z}_{j}-s\left(  \bar{z}\right)
z_{j}\right)  \left(  z_{k}-s\left(  z\right)  \bar{z}_{k}\right)  \right]
\frac{\partial^{2}}{\partial z_{j}\partial\bar{z}_{k}}, \label{e2.5}%
\end{equation}
where $z\in\mathbb{C}^{n}$ and
\begin{equation}
s\left(  z\right)  =\sum_{j=1}^{n}z_{j}^{2}\text{\ \ and}\;\;r\left(
z\right)  =1-2\left\vert z\right\vert ^{2}+2s\left(  z\right)  . \label{e2.6}%
\end{equation}

\end{proposition}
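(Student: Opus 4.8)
The proposition records explicit formulas for four Laplace--Beltrami operators, so the plan is to compute each one directly from the relevant Bergman kernel. Recall that for a bounded domain $D\subset\mathbb{C}^{N}$ with Bergman kernel $K_{D}(\mathbf{z},\overline{\mathbf{z}})$, the Bergman metric is the K\"ahler metric with K\"ahler potential $\log K_{D}$, i.e.\ $g_{i\bar{j}}=\partial_{z_{i}}\partial_{\bar{z}_{j}}\log K_{D}$; and since a K\"ahler metric carries no first-order terms in holomorphic coordinates, its Laplace--Beltrami operator on functions is, up to a fixed nonzero normalising constant, $\sum_{i,j}\overline{(g^{-1})_{ij}}\,\partial_{z_{i}}\partial_{\bar{z}_{j}}$, where $(g^{-1})$ is the ordinary inverse of the Hermitian matrix $(g_{i\bar{j}})$. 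Because multiplying $\Delta_{g^{D}}$ by a nowhere-vanishing constant does not change which functions are invariant harmonic, the whole statement reduces to: write down $K_{D}$, differentiate $\log K_{D}$ twice, and invert the resulting matrix. The kernels are the classical ones of Hua and Lu: $K_{\mathbf{I}(m,n)}$ is a constant times $\det(I_{m}-\mathbf{z}\mathbf{z}^{\ast})^{-(m+n)}$, $K_{\mathbf{II}(n)}$ a constant times $\det(I_{n}-\mathbf{z}\mathbf{z}^{\ast})^{-(n+1)}$, $K_{\mathbf{III}(n)}$ a constant times $\det(I_{n}-\mathbf{z}\mathbf{z}^{\ast})^{-(n-1)}$ (with $\mathbf{z}$ ranging over symmetric, resp.\ antisymmetric, matrices), and $K_{\mathbf{IV}(n)}$ a constant times $\bigl(1-2|z|^{2}+|s(z)|^{2}\bigr)^{-n}$.

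For Type I I would run the matrix calculus. With $V=I_{m}-\mathbf{z}\mathbf{z}^{\ast}$ and $W=I_{n}-\mathbf{z}^{\ast}\mathbf{z}$, the identities $d\log\det V=\operatorname{tr}(V^{-1}\,dV)$ and $dV^{-1}=-V^{-1}(dV)V^{-1}$, together with the push-through identity $(I_{n}-\mathbf{z}^{\ast}\mathbf{z})^{-1}=I_{n}+\mathbf{z}^{\ast}(I_{m}-\mathbf{z}\mathbf{z}^{\ast})^{-1}\mathbf{z}$, give after a short computation that the Bergman metric is a Kronecker product, $g_{(j\alpha)(k\beta)}=(m+n)(V^{-1})_{kj}(W^{-1})_{\alpha\beta}$. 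Inverting such a product and conjugating simply replaces $V^{-1},W^{-1}$ by $V,W$; since $\delta_{\alpha\beta}-\sum_{\ell}z_{\ell\alpha}\bar{z}_{\ell\beta}$ is the $(\beta,\alpha)$-entry of $W$, the operator that comes out is precisely $\left(\ref{e2.2}\right)$. Types II and III have the same ``power of $\det(I-\mathbf{z}\mathbf{z}^{\ast})$'' potential, but the independent holomorphic coordinates are now only the entries $z_{j\alpha}$ with $j\le\alpha$ (resp.\ $j<\alpha$), subject to $z_{\alpha j}=z_{j\alpha}$ (resp.\ $z_{\alpha j}=-z_{j\alpha}$ and $z_{jj}=0$). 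Expressing $\partial/\partial z_{j\alpha}$ in these coordinates through the ``full-matrix'' derivatives scales off-diagonal positions by a factor $1+\delta_{j\alpha}$ (resp.\ suppresses the diagonal), and propagating this correction through the inversion of the Type I metric is exactly what produces the weights $\bigl(1-\delta_{j\alpha}/2\bigr)^{-1}\bigl(1-\delta_{k\beta}/2\bigr)^{-1}$ of $\left(\ref{e2.3}\right)$ and the factors $(1-\delta_{j\alpha})(1-\delta_{k\beta})$ of $\left(\ref{e2.4}\right)$.

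For Type IV, set $X=X(z,\bar{z})=1-2|z|^{2}+|s(z)|^{2}$, so that $\log K_{\mathbf{IV}(n)}=-n\log X+\mathrm{const}$. From $\partial_{z_{j}}s(z)=2z_{j}$ one gets $\partial_{\bar{z}_{k}}X=2\bigl(s(z)\bar{z}_{k}-z_{k}\bigr)$ and $\partial_{z_{j}}\partial_{\bar{z}_{k}}X=2\bigl(2z_{j}\bar{z}_{k}-\delta_{jk}\bigr)$, hence
\[
g_{j\bar{k}}=\frac{2n}{X}\left[\bigl(\delta_{jk}-2z_{j}\bar{z}_{k}\bigr)+\frac{2}{X}\bigl(\bar{z}_{j}-s(\bar{z})z_{j}\bigr)\bigl(z_{k}-s(z)\bar{z}_{k}\bigr)\right].
\]
Thus $(g_{j\bar{k}})=\tfrac{2n}{X}\bigl(I-2zz^{\ast}+\tfrac{2}{X}\,pp^{\ast}\bigr)$, with $p$ the column vector of components $p_{j}=\bar{z}_{j}-s(\bar{z})z_{j}$, a rank-at-most-two perturbation of a scalar multiple of the identity; its inverse follows from two applications of the Sherman--Morrison formula, the only scalar inputs being $z^{\ast}z=|z|^{2}$, $z^{\ast}p=s(\bar{z})\bigl(1-|z|^{2}\bigr)$ and $p^{\ast}p=|z|^{2}-2|s(z)|^{2}+|s(z)|^{2}|z|^{2}$. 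Substituting these and repeatedly using $X=1-2|z|^{2}+|s(z)|^{2}$ to collapse the resulting rational expressions, one finds that the associated operator is, up to a nonzero constant, exactly $\left(\ref{e2.5}\right)$.

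I expect the actual work to sit in two places. The first is the coordinate bookkeeping for Types II and III, where one must track carefully the diagonal-versus-off-diagonal normalisation of the matrix entries---this is precisely what manufactures the halves and the vanishing-on-the-diagonal factors. The second is the final algebraic collapse in Type IV, where the Sherman--Morrison output is a sum of several rank-one pieces that assemble into the clean form $\left(\ref{e2.5}\right)$ only after systematic use of the domain's defining relation. Everything else---the Type I computation, and the check that each coefficient matrix is Hermitian so that the operators are real---is routine linear algebra once the Bergman kernels are in hand.
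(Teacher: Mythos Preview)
The paper does not prove Proposition~\ref{p2.1} at all: it is stated as background, with the preamble ``According to the books of Hua \cite{Hua19630} and Lu \cite{lu1963}, the following proposition holds,'' and no argument is given. So there is no proof in the paper to compare your proposal against.

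That said, your outline is the standard derivation one finds in those references: write the Bergman kernel as a power of $\det(I-\mathbf{z}\mathbf{z}^{\ast})$ (or of $1-2|z|^{2}+|s(z)|^{2}$ for Type~IV), take $\partial\bar\partial\log K_{D}$ to get the metric, and invert. Your identification of the two genuinely nontrivial spots is accurate: for Types~II and III the whole content is the bookkeeping between the independent coordinates $z_{j\alpha}$ with $j\le\alpha$ (resp.\ $j<\alpha$) and the ``full-matrix'' derivatives, which is exactly what generates the factors $(1-\delta_{j\alpha}/2)^{-1}$ and $(1-\delta_{j\alpha})$; and for Type~IV the inversion of the rank-two perturbation is where the algebra lives. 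One small caution: be careful to match the paper's exact normalisation in \eqref{e2.5}--\eqref{e2.6}, where $r(z)=1-2|z|^{2}+2s(z)$ rather than the real defining function $X=1-2|z|^{2}+|s(z)|^{2}$ you work with; the final form of $\Delta_{4}$ in the literature admits several equivalent presentations, and you should check that your Sherman--Morrison output collapses to this particular one (or note the discrepancy if it does not).
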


The main purpose of this section is to prove the following theorem.

\begin{theorem}
\label{t2.2}Let $D$ be a bounded symmetric domain and let $u\in C^{2}\left(
D\right)  \cap C\left(  \overline{D}\right)  $. Then

\begin{enumerate}
\item[(i)] $\Delta_{1}u=0$ on $\mathbf{I}\left(  m,n\right)  $ if and only if%
\[
\Delta_{1}^{jk}u\left(  \mathbf{z}\right)  =0,\;\text{ for all }1\leq j,k\leq
m.
\]

\item[(ii)] $\Delta_{2}u=0$ on $\mathbf{II}\left(  n\right)  $ if and only if%
\[
\Delta_{2}^{jk}u\left(  \mathbf{z}\right)  =0,\;\;1\leq j,k\leq n.
\]

\item[(iii)] When $n$ is even, $\Delta_{3}u\left(  \mathbf{z}\right)  =0$ on
$\mathbf{III}\left(  n\right)  $ if and only if%
\[
\Delta_{3}^{jk}u\left(  \mathbf{z}\right)  =0,\;\;1\leq j,k\leq n.
\]

\end{enumerate}
\end{theorem}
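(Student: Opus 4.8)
The plan is to prove each direction separately. The ``if'' direction is trivial in all three cases: if every $\Delta_D^{jk}u=0$, then $\Delta_D u = \sum_{j,k} V_{jk}\Delta_D^{jk}u = 0$ (up to the harmless constant factor $1/4$ in Types II and III). So the real content is the ``only if'' direction, and for that I would exploit the transitivity of the automorphism group on $D$: it suffices to prove that $\Delta_D u = 0$ on $D$ forces $\Delta_D^{jk}u(\mathbf{0}) = 0$ at the origin for all $j,k$, because for an arbitrary point $\mathbf{z}_0 \in D$ one composes $u$ with an automorphism carrying $\mathbf{0}$ to $\mathbf{z}_0$, uses that $\Delta_D$ is biholomorphically invariant, and checks that the second-order operators $\Delta_D^{jk}$ transform among themselves in a way that lets one conclude (here the block structure of the isotropy group — permutations and unitary rotations of the matrix entries — is what one needs).

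Concretely, at $\mathbf{z} = \mathbf{0}$ we have $V_{jk} = \delta_{jk}$ and $V_{\alpha\beta} = \delta_{\alpha\beta}$, so $\Delta_1 u(\mathbf{0}) = \sum_{j=1}^m \sum_{\alpha=1}^n \partial^2 u/\partial z_{j\alpha}\partial\bar z_{j\alpha}(\mathbf{0})$, while $\Delta_1^{jk}u(\mathbf{0}) = \sum_{\alpha=1}^n \partial^2 u/\partial z_{j\alpha}\partial\bar z_{k\beta}(\mathbf{0})\big|_{\beta=\alpha}$ — i.e. $\Delta_1 u(\mathbf{0})$ is the ``trace'' $\sum_j \Delta_1^{jj}u(\mathbf{0})$ of the matrix $(\Delta_1^{jk}u(\mathbf{0}))_{j,k}$. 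The strategy is then to run the automorphism group over the point $\mathbf{0}$: for each fixed unitary $U \in U(m)$ acting by $\mathbf{z}\mapsto U\mathbf{z}$ (and similarly $V\in U(n)$ acting on the right, and in Types II, III the symmetry/antisymmetry-preserving subgroups), the function $u\circ U$ is still annihilated by $\Delta_1$, hence $\sum_j \Delta_1^{jj}(u\circ U)(\mathbf{0}) = 0$; expanding the chain rule, this says $\operatorname{tr}\big(U^* A U\big) = 0$ where $A = (A_{jk})$, $A_{jk} := \Delta_1^{jk}u(\mathbf{0})$ viewed as an $m\times m$ Hermitian-type matrix, and since this holds for all $U \in U(m)$ and $\operatorname{tr}(U^*AU) = \operatorname{tr}(A)$ is constant, one must instead use that $A_{jk}$ depends on $\mathbf{z}$ and differentiate/translate. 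The cleaner route: apply the \emph{whole} automorphism group so that the map $\mathbf{z}_0 \mapsto \big(\Delta_D^{jk}u(\mathbf{z}_0)\big)$ pulled back to $\mathbf{0}$ becomes, for each $\mathbf{z}_0$, a matrix whose trace against every admissible $U$ vanishes — and the admissible $U$'s, together with the coordinate permutations built into the block decomposition, span enough of the Hermitian matrices that trace-against-all-of-them forcing zero gives the full matrix equal to zero.

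The key technical lemma, and the step I expect to be the main obstacle, is exactly this linear-algebra/representation-theory claim: that the isotropy subgroup $K$ of $\mathrm{Aut}(D)$ at $\mathbf{0}$ acts on the space of ``Hua matrices'' $\big(\Delta_D^{jk}u(\mathbf{0})\big)$ in such a way that invariance of the trace $\sum_j \Delta_D^{jj}$ under all of $K$ — applied not just at $\mathbf{0}$ but at every point after translating by $\mathrm{Aut}(D)$ — forces every entry $\Delta_D^{jk}u \equiv 0$. For Type I this is the statement that the $U(m)\times U(n)$ action, combined with looking at all points of $D$, is enough; the subtlety in Types II and III (and the reason Type III needs $n$ even, matching the jump in $\kappa(D)$ in \eqref{e1.14}) is that the symmetric/antisymmetric constraint shrinks the isotropy group, and one must check the orbit of a generic Hermitian matrix under the smaller group still fails to lie in any proper trace-zero subspace; for antisymmetric $\mathbf{z}$ with $n$ odd the relevant action degenerates (a symplectic-type obstruction), which is why that case is excluded. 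I would organize the proof as: (1) reduce to $\mathbf{z} = \mathbf{0}$ via automorphism-invariance of $\Delta_D$ and covariance of the $\Delta_D^{jk}$; (2) record the chain-rule formula for how $\Delta_D^{jk}(u\circ\varphi)(\mathbf{0})$ relates to $\Delta_D^{jk}u(\varphi(\mathbf{0}))$ for $\varphi \in \mathrm{Aut}(D)$; (3) prove the linear-algebra lemma that the orbit of the Hua matrix under the isotropy group spans (modulo the one-dimensional trace direction one already controls) all directions, case by case for the three domain families; (4) conclude.
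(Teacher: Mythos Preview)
Your approach is fundamentally different from the paper's and has a genuine gap at its core. The paper does \emph{not} argue via the isotropy representation at all; instead it proves directly, by explicit matrix computation (Proposition~\ref{p2.3} and Lemmas~\ref{l2.4}--\ref{l2.7}), that the Poisson--Szeg\H{o} kernel itself satisfies $\Delta_D^{jk}P^D(\,\cdot\,,\mathbf{w})=0$ for every $j,k$ and every $\mathbf{w}\in\mathcal U(D)$, and then invokes the integral representation $u(\mathbf{z})=\int_{\mathcal U(D)}P^D(\mathbf{z},\mathbf{w})\,u(\mathbf{w})\,d\sigma(\mathbf{w})$ to conclude $\Delta_D^{jk}u=0$. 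The hypothesis $u\in C(\overline D)$ is what makes that representation available, and it is used in an essential way. The parity restriction in Type~III also has a completely concrete origin in this computation: for $\mathbf{w}\in\mathcal U(\mathbf{III}(n))$ one has $\mathbf{w}^*\mathbf{w}=I_n$ precisely when $n$ is even (an odd-size skew-symmetric matrix is singular), and this is exactly what kills the extra term $F(\mathbf{z},\mathbf{w})$ in Lemma~\ref{l2.6}(ii); your ``symplectic-type obstruction'' story does not connect to this.

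The gap in your plan is that you never use $u\in C(\overline D)$, and without it the implication you are after is \emph{false} on any irreducible domain of rank $\geq 2$: the single scalar equation $\Delta_D u=0$ has a vastly larger solution space in $C^2(D)$ than the overdetermined Hua system $\{\Delta_D^{jk}u=0\}$, whose bounded solutions are exactly the \v{S}ilov-boundary Poisson integrals. Your own calculation already exposes why the group-theoretic route stalls: at $\mathbf 0$ the isotropy group acts on the Hua matrix $A=(\Delta_D^{jk}u(\mathbf 0))$ by unitary conjugation, and $\operatorname{tr}(U^*AU)=\operatorname{tr}(A)$ gives back only the one scalar you started with. Transporting by $\varphi\in\operatorname{Aut}(D)$ to other basepoints and pulling back yields nothing new either, because by invariance $\Delta_D(u\circ\varphi)(\mathbf 0)=0$ is again a single trace condition --- equivalently, at a general point you only ever see $\sum_{j,k}V_{jk}(\mathbf z)\,\Delta_D^{jk}u(\mathbf z)=0$, which is the original equation. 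Covariance of the system under $\operatorname{Aut}(D)$ means no independent linear relations are generated this way. There is no purely local or representation-theoretic path from $\Delta_D u=0$ to the full Hua system; the Poisson formula is doing the real work, and your outline needs to be rebuilt around it.
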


Part (i) of Theorem \ref{t2.2} was proved by Hua \cite{Hua19630} using the
technique of Lie group. We will divide the proof of the rest of the above
theorem into several lemmas. Define%
\begin{equation}
W\left(  \mathbf{z},\mathbf{w}\right)  =I_{m}-\mathbf{zw}^{\ast}%
\text{\ and}\;V\left(  \mathbf{z}\right)  =W\left(  \mathbf{z},\mathbf{z}%
\right)  . \label{e2.7}%
\end{equation}
By (\ref{e1.13}), the Poisson-Szeg\"{o} kernel on $D$ can be written as%
\begin{equation}
P^{D}\left(  \mathbf{z},\mathbf{w}\right)  =\frac{\left(  \det V\left(
\mathbf{z}\right)  \right)  ^{\kappa\left(  D\right)  }}{\left\vert \det
W\left(  \mathbf{z},\mathbf{w}\right)  \right\vert ^{2\kappa\left(  D\right)
}}. \label{e2.8}%
\end{equation}

\begin{proposition}
\label{p2.3}With the notations above, $P=P^{D}\left(  \mathbf{z}%
,\mathbf{w}\right)  $ and $\kappa=\kappa\left(  D\right)  $, one has%
\begin{equation}
\frac{1}{\kappa^{2}P}\frac{\partial^{2}P}{\partial z_{j\alpha}\partial\bar
{z}_{k\beta}}=\frac{1}{\kappa}\frac{\partial^{2}\log\det V\left(
\mathbf{z}\right)  }{\partial z_{j\alpha}\partial\bar{z}_{k\beta}}+\left(
b_{j\alpha}\overline{b_{k\beta}}+c_{j\alpha}\overline{c_{k\beta}}-b_{j\alpha
}\overline{c_{k\beta}}-c_{j\alpha}\overline{b_{k\beta}}\right)  , \label{e2.9}%
\end{equation}
where%
\begin{equation}
b_{j\alpha}:=\frac{\partial\log\det V\left(  \mathbf{z}\right)  }{\partial
z_{j\alpha}}\text{ and }c_{j\alpha}=c_{j\alpha}\left(  \mathbf{z}%
,\mathbf{w}\right)  :=\frac{\partial\log\det W\left(  \mathbf{z}%
,\mathbf{w}\right)  }{\partial z_{j\alpha}}. \label{e2.10}%
\end{equation}

\end{proposition}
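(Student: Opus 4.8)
The plan is to reduce everything to two elementary facts about the logarithmic derivatives of determinants, together with the product/quotient structure of $P^D$. First I would write $\log P = \kappa \log\det V(\mathbf z) - \kappa\log\det W(\mathbf z,\mathbf w) - \kappa\log\overline{\det W(\mathbf z,\mathbf w)}$, using $|\det W|^2 = \det W\,\overline{\det W}$ and the fact that $\det V(\mathbf z) = \det(I_m - \mathbf z\mathbf z^\ast) > 0$ on $D$, so the logarithm is legitimate. The key observation is that $\overline{\det W(\mathbf z,\mathbf w)}$ is anti-holomorphic in $\mathbf z$, so $\partial/\partial z_{j\alpha}$ kills it; hence that term contributes nothing to the mixed second derivative. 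Thus, abbreviating $f := \log\det V(\mathbf z)$ and $h := \log\det W(\mathbf z,\mathbf w)$, and writing $b_{j\alpha} = f_{z_{j\alpha}}$, $c_{j\alpha} = h_{z_{j\alpha}}$ as in \eqref{e2.10}, we have
\begin{equation}
\frac{\partial \log P}{\partial z_{j\alpha}} = \kappa\left(b_{j\alpha} - c_{j\alpha}\right).
\end{equation}

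Next I would differentiate in $\bar z_{k\beta}$. On one hand, $\partial^2 \log P/\partial z_{j\alpha}\partial\bar z_{k\beta} = \kappa\bigl(f_{z_{j\alpha}\bar z_{k\beta}} - h_{z_{j\alpha}\bar z_{k\beta}}\bigr)$. On the other hand, $h = \log\det W(\mathbf z,\mathbf w)$ is holomorphic in $\mathbf z$ (for fixed $\mathbf w$), so $h_{z_{j\alpha}\bar z_{k\beta}} = 0$; therefore $\partial^2\log P/\partial z_{j\alpha}\partial\bar z_{k\beta} = \kappa\, f_{z_{j\alpha}\bar z_{k\beta}} = \kappa\,\partial^2\log\det V(\mathbf z)/\partial z_{j\alpha}\partial\bar z_{k\beta}$. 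Now I would compare this with the left-hand side of \eqref{e2.9} by using the general identity, valid for any positive function $P$,
\begin{equation}
\frac{1}{P}\frac{\partial^2 P}{\partial z_{j\alpha}\partial\bar z_{k\beta}} = \frac{\partial^2\log P}{\partial z_{j\alpha}\partial\bar z_{k\beta}} + \frac{\partial\log P}{\partial z_{j\alpha}}\,\overline{\left(\frac{\partial\log P}{\partial z_{k\beta}}\right)},
\end{equation}
which follows from $\partial_{\bar z_{k\beta}}(P^{-1}\partial_{z_{j\alpha}}P) = P^{-1}\partial_{z_{j\alpha}\bar z_{k\beta}}P - (P^{-1}\partial_{z_{j\alpha}}P)(P^{-1}\partial_{\bar z_{k\beta}}P)$ and the reality of $P$, which gives $\partial_{\bar z_{k\beta}}\log P = \overline{\partial_{z_{k\beta}}\log P}$.

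Substituting the first-order formula $\partial_{z_{j\alpha}}\log P = \kappa(b_{j\alpha}-c_{j\alpha})$ into this identity yields
\begin{equation}
\frac{1}{P}\frac{\partial^2 P}{\partial z_{j\alpha}\partial\bar z_{k\beta}} = \kappa\frac{\partial^2\log\det V(\mathbf z)}{\partial z_{j\alpha}\partial\bar z_{k\beta}} + \kappa^2\bigl(b_{j\alpha}-c_{j\alpha}\bigr)\overline{\bigl(b_{k\beta}-c_{k\beta}\bigr)},
\end{equation}
and dividing through by $\kappa^2$ and expanding the product $\bigl(b_{j\alpha}-c_{j\alpha}\bigr)\overline{\bigl(b_{k\beta}-c_{k\beta}\bigr)} = b_{j\alpha}\overline{b_{k\beta}} + c_{j\alpha}\overline{c_{k\beta}} - b_{j\alpha}\overline{c_{k\beta}} - c_{j\alpha}\overline{b_{k\beta}}$ gives exactly \eqref{e2.9}. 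There is no real obstacle here; the only point requiring a little care is the bookkeeping of which factors are holomorphic versus anti-holomorphic in $\mathbf z$ — specifically that $\det W(\mathbf z,\mathbf w)$ is holomorphic in $\mathbf z$ while its conjugate is anti-holomorphic — and the verification that $P$ is positive and real so that $\log P$ is well-defined and $\partial_{\bar z}\log P = \overline{\partial_z \log P}$. Both are immediate from \eqref{e2.8}, so the proof is a short computation once these are noted.
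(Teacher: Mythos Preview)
Your proof is correct and follows essentially the same approach as the paper: take logarithms in \eqref{e2.8}, use the holomorphicity of $\det W(\mathbf z,\mathbf w)$ in $\mathbf z$ (and anti-holomorphicity of its conjugate) to reduce $\partial_{z_{j\alpha}}\partial_{\bar z_{k\beta}}\log P$ to $\kappa\,\partial_{z_{j\alpha}}\partial_{\bar z_{k\beta}}\log\det V$, and then apply the identity $P^{-1}P_{z\bar z} = (\log P)_{z\bar z} + (\log P)_z(\log P)_{\bar z}$ together with the first-order formula $\partial_{z_{j\alpha}}\log P = \kappa(b_{j\alpha}-c_{j\alpha})$. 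You have simply written out explicitly the ``simple computation'' that the paper leaves to the reader.
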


\begin{proof}
Notice that $\det\overline{W\left(  \mathbf{z},\mathbf{w}\right)  }=\det
W\left(  \mathbf{w},\mathbf{z}\right)  $ and $\left(  \ref{e2.8}\right)  $,
one has%
\begin{equation}
\log P^{D}\left(  \mathbf{z},\mathbf{w}\right)  =\kappa\left(  \log\det
V\left(  \mathbf{z}\right)  -\log\det W\left(  \mathbf{z},\mathbf{w}\right)
-\log\det W\left(  \mathbf{\mathbf{w},\mathbf{z}}\right)  \right)
\label{e2.11}%
\end{equation}
and then%
\[
\kappa\frac{\partial^{2}\log\det V\left(  \mathbf{z}\right)  }{\partial
z_{j\alpha}\partial\bar{z}_{k\beta}}=\frac{\partial^{2}\log P^{D}\left(
\mathbf{z},\mathbf{w}\right)  }{\partial z_{j\alpha}\partial\bar{z}_{k\beta}%
}=\frac{1}{P}\frac{\partial^{2}P}{\partial z_{j\alpha}\partial\bar{z}_{k\beta
}}-\frac{\partial\log P}{\partial z_{j\alpha}}\frac{\partial\log P}%
{\partial\bar{z}_{k\beta}}.
\]
Therefore%
\[
\frac{1}{P}\frac{\partial^{2}P}{\partial z_{j\alpha}\partial\bar{z}_{k\beta}%
}=\kappa\frac{\partial^{2}\log\det V\left(  \mathbf{z}\right)  }{\partial
z_{j\alpha}\partial\bar{z}_{k\beta}}+\frac{\partial\log P}{\partial
z_{j\alpha}}\frac{\partial\log P}{\partial\bar{z}_{k\beta}}.
\]
A simple computation gives the proof of the proposition.
\end{proof}

Let $M\left(  \cdot\right)  =\left[  M_{jk}\left(  \cdot\right)  \right]  $ be
an $n\times n$ matrix and $M^{-1}\left(  \cdot\right)  =\left[  M^{jk}\left(
\cdot\right)  \right]  $. Here $j$ represents row index while $k$ represents
column index. Then%
\begin{equation}
\frac{\partial M\left(  \mathbf{z}\right)  }{\partial z_{j\alpha}}=\left[
\frac{\partial M_{pq}\left(  \mathbf{z}\right)  }{\partial z_{j\alpha}%
}\right]  . \label{e2.13}%
\end{equation}
Denoted by $\mathbf{E}_{jk}$ the $n\times n$ matrix with $\left(  j,k\right)
$-entry $1$, other entries $0.$

\begin{lemma}
\label{l2.4}

\begin{enumerate}
\item[(i)] For $\mathbf{z}\in\mathbf{II}\left(  n\right)  $ and $\mathbf{w}%
_{{}}\in\overline{\mathbf{II}\left(  n\right)  }$, one has%
\begin{equation}
c_{j\alpha}\left(  \mathbf{z},\mathbf{w}\right)  =\frac{\partial\log\det
W\left(  \mathbf{z},\mathbf{w}\right)  }{\partial z_{j\alpha}}=-\left(
2-\delta_{j\alpha}\right)  \left[  \mathbf{w}^{\ast}W^{-1}\left(
\mathbf{z},\mathbf{w}\right)  \right]  _{j\alpha} \label{e2.14}%
\end{equation}
and%
\begin{equation}
\overline{c_{k\beta}}\left(  \mathbf{z},\mathbf{w}\right)  =-\left(
2-\delta_{k\beta}\right)  \left[  W^{-1}\left(  \mathbf{w},\mathbf{z}\right)
\mathbf{w}\right]  _{k\beta}. \label{e2.15}%
\end{equation}

\item[(ii)] For $\mathbf{z}\in\mathbf{III}\left(  n\right)  $ and
$\mathbf{w}\in\mathbf{\overline{\mathbf{III}\left(  n\right)  }}$, one has%
\begin{equation}
c_{j\alpha}\left(  \mathbf{z},\mathbf{w}\right)  =\frac{\partial\log\det
W\left(  \mathbf{z},\mathbf{w}\right)  }{\partial z_{j\alpha}}=2\left[
\mathbf{w}^{\ast}W^{-1}\left(  \mathbf{z},\mathbf{w}\right)  \right]
_{j\alpha} \label{e2.16}%
\end{equation}
and%
\begin{equation}
\overline{c_{k\beta}}\left(  \mathbf{z},\mathbf{w}\right)  =-2\left[
W^{-1}\left(  \mathbf{w},\mathbf{z}\right)  \mathbf{w}\right]  _{k\beta}.
\label{e2.17}%
\end{equation}

\end{enumerate}
\end{lemma}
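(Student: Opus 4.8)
The goal is to compute, for the domains $\mathbf{II}(n)$ and $\mathbf{III}(n)$, the logarithmic derivative $c_{j\alpha}(\mathbf{z},\mathbf{w})=\partial_{z_{j\alpha}}\log\det W(\mathbf{z},\mathbf{w})$ where $W(\mathbf{z},\mathbf{w})=I_n-\mathbf{z}\mathbf{w}^\ast$, taking into account that the coordinates $z_{j\alpha}$ are \emph{not} independent: for $\mathbf{II}(n)$ we have $z_{j\alpha}=z_{\alpha j}$ (symmetric), and for $\mathbf{III}(n)$ we have $z_{j\alpha}=-z_{\alpha j}$ (skew-symmetric, so in particular $z_{jj}=0$). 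The factor $(2-\delta_{j\alpha})$ in part (i) and the clean factor $2$ in part (ii) are exactly the bookkeeping artifacts of this dependence.

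The plan is to start from the standard matrix-calculus identity $\partial_{t}\log\det W = \operatorname{tr}(W^{-1}\partial_t W)$ valid when the entries are independent. First I would treat $\mathbf{z}$ as a general (unconstrained) matrix variable and record that $\partial W/\partial z_{pq} = -\mathbf{E}_{pq}\mathbf{w}^\ast$, so that $\partial_{z_{pq}}\log\det W = -\operatorname{tr}(W^{-1}\mathbf{E}_{pq}\mathbf{w}^\ast) = -[\mathbf{w}^\ast W^{-1}]_{qp}$. (Here one uses $\operatorname{tr}(A\mathbf{E}_{pq}B)=[BA]_{qp}$.) Then I would impose the symmetry constraint via the chain rule: on $\mathbf{II}(n)$, $\log\det W$ is a function of the independent variables $\{z_{pq}:p\le q\}$, and the ``total'' derivative with respect to $z_{j\alpha}$ (with $j\neq\alpha$) picks up contributions from both the $(j,\alpha)$ and the $(\alpha,j)$ slots of the unconstrained function, giving $-[\mathbf{w}^\ast W^{-1}]_{\alpha j}-[\mathbf{w}^\ast W^{-1}]_{j\alpha}$; for $j=\alpha$ there is only one slot. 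Because $W^{-1}$ is itself symmetric when $\mathbf{z},\mathbf{w}$ are symmetric (since $W^t = I-\overline{\mathbf{w}}\mathbf{z}^t = I - \mathbf{w}^\ast{}^t\mathbf{z}^t$... one checks $W(\mathbf{z},\mathbf{w})^t$ relates to $W$ appropriately, using $\mathbf{z}^t=\mathbf{z}$ and $(\mathbf{w}^\ast)^t=\overline{\mathbf{w}}$, hence $(\mathbf{w}^\ast)^t = \overline{\mathbf{w}^t}=\overline{\mathbf{w}}$), the two terms coincide, and we get $-(2-\delta_{j\alpha})[\mathbf{w}^\ast W^{-1}]_{j\alpha}$, which is (\ref{e2.14}). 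Formula (\ref{e2.15}) then follows by conjugating (\ref{e2.14}) and using $\overline{W(\mathbf{z},\mathbf{w})}=W(\mathbf{w},\mathbf{z})$ together with $\overline{[\mathbf{w}^\ast W^{-1}]_{j\alpha}} = [\overline{W^{-1}}\,\overline{\mathbf{w}^\ast}]_{\alpha j}=[W(\mathbf{w},\mathbf{z})^{-1}\mathbf{w}^t]_{\alpha j}$, and again invoking symmetry of $\mathbf{w}$ to rewrite $\mathbf{w}^t=\mathbf{w}$ and transpose the scalar index pair. The skew case (ii) is identical in structure: the constraint is $z_{pq}=-z_{qp}$, so the chain rule gives $-[\mathbf{w}^\ast W^{-1}]_{\alpha j}+[\mathbf{w}^\ast W^{-1}]_{j\alpha}$ (note the sign flip and the absence of a diagonal term since $z_{jj}\equiv 0$); now $W^{-1}$ is skew-symmetric, so the two terms add rather than cancel, producing $2[\mathbf{w}^\ast W^{-1}]_{j\alpha}$ with a plus sign, which is (\ref{e2.16}), and (\ref{e2.17}) follows by the same conjugation argument, the extra overall minus sign coming from skew-symmetry of $\mathbf{w}$.

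Concretely, the steps in order are: (1) establish the unconstrained identity $\partial_{z_{pq}}\log\det W=-[\mathbf{w}^\ast W^{-1}]_{qp}$; (2) verify that $W(\mathbf{z},\mathbf{w})$ is symmetric (resp. that $W^{-1}$ is symmetric) when $\mathbf{z},\mathbf{w}$ are symmetric, and skew-symmetric-compatible in the Type III case --- more precisely that $W(\mathbf{z},\mathbf{w})^t = W(\mathbf{w},\mathbf{z})$ appropriately so that $W^{-1}$ inherits the relevant transpose symmetry; (3) apply the chain rule for the dependent coordinates, carefully tracking the diagonal term $\delta_{j\alpha}$ in Type II and its absence in Type III, and the sign; (4) conjugate to obtain the barred formulas, using $\overline{W(\mathbf{z},\mathbf{w})}=W(\mathbf{w},\mathbf{z})$.

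The main obstacle is purely bookkeeping: getting the transpose/conjugate index gymnastics exactly right so that the factors $(2-\delta_{j\alpha})$ in Type II and $2$ in Type III emerge with the correct signs, and making sure the symmetry of $W^{-1}$ is correctly stated (for Type II it is literally $W^{-1}$ symmetric; for Type III one must be careful that $W=I-\mathbf{z}\mathbf{w}^\ast$ with $\mathbf{z}^t=-\mathbf{z}$, $\mathbf{w}^t=-\mathbf{w}$ gives $W^t = I - \overline{\mathbf{w}}\,\mathbf{z}^t = I - \overline{\mathbf{w}}(-\mathbf{z})$, and one wants to relate this to $W(\mathbf{w},\mathbf{z})=I-\mathbf{w}\mathbf{z}^\ast$; in fact $\overline{W(\mathbf{z},\mathbf{w})^t}$ is the clean object). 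No deep input is needed beyond Jacobi's formula for $\partial\log\det$ and elementary manipulation of the constraint, so once the conventions are pinned down the computation is short.
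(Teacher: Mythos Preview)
Your approach is essentially the paper's: compute $c_{j\alpha}=\operatorname{tr}\bigl(W^{-1}\,\partial W/\partial z_{j\alpha}\bigr)$, with $\partial\mathbf{z}/\partial z_{j\alpha}$ equal to $(1-\tfrac{1}{2}\delta_{j\alpha})(\mathbf{E}_{j\alpha}+\mathbf{E}_{\alpha j})$ in Type~II and $\mathbf{E}_{j\alpha}-\mathbf{E}_{\alpha j}$ in Type~III, then use a symmetry to merge the two trace terms. One correction, though: it is \emph{not} $W^{-1}$ that is symmetric (resp.\ skew) --- indeed $W^t=I-\overline{\mathbf w}\,\mathbf z\neq W$ in general --- but rather the product $\mathbf w^\ast W^{-1}(\mathbf z,\mathbf w)$; this follows from the identity $\overline{\mathbf w}(I-\mathbf z\overline{\mathbf w})^{-1}=(I-\overline{\mathbf w}\mathbf z)^{-1}\overline{\mathbf w}$ combined with $\mathbf z^t=\pm\mathbf z$, $\mathbf w^t=\pm\mathbf w$, and it is exactly what is needed to make $[\mathbf w^\ast W^{-1}]_{\alpha j}=\pm[\mathbf w^\ast W^{-1}]_{j\alpha}$ so your two chain-rule contributions combine. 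With that fix your sketch goes through and matches the paper.
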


\begin{proof}
On $\mathbf{II}\left(  n\right)  $, $\mathbf{z}^{t}=\mathbf{z}$ and
$\mathbf{w}^{t}=\mathbf{w},$ one can easily check that%
\begin{equation}
\left(  \mathbf{w}^{\ast}W^{-1}\left(  \mathbf{z},\mathbf{w}_{{}}\right)
\right)  ^{t}=\mathbf{w}^{\ast}W^{-1}\left(  \mathbf{z},\mathbf{w}\right)
.\label{e2.18}%
\end{equation}
Thus
\begin{align*}
c_{j\alpha}\left(  \mathbf{z},\mathbf{w}\right)   &  =\operatorname*{tr}%
\left(  W^{-1}\left(  \mathbf{z},\mathbf{w}\right)  \frac{\partial W\left(
\mathbf{z},\mathbf{w}\right)  }{\partial z_{j\alpha}}\right)  \\
&  =-\operatorname*{tr}\left(  W^{-1}\left(  \mathbf{z},\mathbf{w}\right)
\left(  1-\frac{\delta_{j\alpha}}{2}\right)  \left(  \mathbf{E}_{j\alpha
}+E_{\alpha j}\right)  \mathbf{w}^{\ast}\right)  \\
&  =-\left(  1-\frac{\delta_{j\alpha}}{2}\right)  \operatorname*{tr}\left(
\mathbf{E}_{j\alpha}\mathbf{w}^{\ast}W^{-1}\left(  \mathbf{z},\mathbf{w}%
\right)  +\mathbf{w}^{\ast}W^{-1}\left(  \mathbf{z},\mathbf{w}\right)
\mathbf{E}_{\alpha j}\right)  \\
&  =-\left(  1-\frac{\delta_{j\alpha}}{2}\right)  \operatorname*{tr}\left(
\mathbf{E}_{j\alpha}\mathbf{w}^{\ast}W^{-1}\left(  \mathbf{z},\mathbf{w}%
\right)  +\mathbf{E}_{\alpha j}^{t}\left(  \mathbf{w}^{\ast}W^{-1}\left(
\mathbf{z},\mathbf{w}\right)  \right)  ^{t}\right)  \\
&  =-\left(  2-\delta_{j\alpha}\right)  \operatorname*{tr}\left(
\mathbf{E}_{j\alpha}\mathbf{w}^{\ast}W^{-1}\left(  \mathbf{z},\mathbf{w}%
\right)  \right)  \\
&  =-\left(  2-\delta_{j\alpha}\right)  \left[  \mathbf{w}^{\ast}W^{-1}\left(
\mathbf{z},\mathbf{w}\right)  \right]  _{j\alpha}.
\end{align*}
By $\left(  \ref{e2.18}\right)  $, one has $\left(  \ref{e2.15}\right)  $
holds and Part (i) is proved.

On $\mathbf{III}\left(  n\right)  $, it is easily to see that $\left(
\mathbf{w}^{\ast}W^{-1}\left(  \mathbf{z},\mathbf{w}\right)  \right)  ^{t}$
and $\left(  W^{-1}\left(  \mathbf{\mathbf{w}_{{}}},\mathbf{\mathbf{z}_{{}}%
}\right)  \mathbf{\mathbf{w}}\right)  ^{t}$ are anti-symmetric and%
\begin{align*}
c_{j\alpha}\left(  \mathbf{z},\mathbf{w}\right)   &  =\operatorname*{tr}%
\left(  W^{-1}\left(  \mathbf{z},\mathbf{w}\right)  \frac{\partial W\left(
\mathbf{z},\mathbf{w}\right)  }{\partial z_{j\alpha}}\right)  \\
&  =-\operatorname*{tr}\left(  W^{-1}\left(  \mathbf{z},\mathbf{w}\right)
\left(  \mathbf{E}_{j\alpha}-\mathbf{E}_{\alpha j}\right)  \mathbf{w}^{\ast
}\right)  \\
&  =-2\operatorname*{tr}\left(  \mathbf{E}_{j\alpha}\mathbf{w}^{\ast}%
W^{-1}\left(  \mathbf{z},\mathbf{w}\right)  \right)  \\
&  =-2\left[  \mathbf{w}^{\ast}W^{-1}\left(  \mathbf{z},\mathbf{w}\right)
\right]  _{\alpha j}\\
&  =2\left[  \mathbf{w}^{\ast}W^{-1}\left(  \mathbf{z},\mathbf{w}\right)
\right]  _{j\alpha}%
\end{align*}
and $\overline{c_{k\beta}}\left(  \mathbf{z},\mathbf{w}\right)  =-2\left[
W^{-1}\left(  \mathbf{w},\mathbf{z}\right)  \mathbf{w}\right]  _{k\beta}.$
Therefore, Part (ii) is proved and so is the lemma.
\end{proof}

\begin{lemma}
\label{l2.5}

\begin{enumerate}
\item[(i)] For $\mathbf{z}\in\mathbf{II}\left(  n\right)  $, one has%
\begin{equation}
A_{2}^{jk}\left(  \mathbf{z}_{{}}\right)  :=\frac{2}{n+1}\Delta_{2}^{jk}%
\log\det V=-4V^{kj} \label{e2.19}%
\end{equation}
and%
\begin{equation}
B_{2}^{jk}\left(  \mathbf{z}_{{}}\right)  :=\sum_{\alpha,\beta}\frac
{V_{\alpha\beta}}{\left(  1-\delta_{j\alpha}/2\right)  \left(  1-\delta
_{k\beta}/2\right)  }b_{j\alpha}\overline{b_{k\beta}}=4\left[  V^{-1}\left(
\mathbf{z}^{\ast}\right)  -I_{n}\right]  _{jk}. \label{e2.20}%
\end{equation}

\item[(ii)] For $\mathbf{z}_{{}}\in\mathbf{III}\left(  n\right)  $, one has%
\begin{equation}
A_{3}^{jk}\left(  \mathbf{z}_{{}}\right)  :=\frac{1}{\kappa}\Delta_{3}%
^{jk}\log\det V\left(  \mathbf{z}_{{}}\right)  =-\frac{2}{\kappa}\left(
n-1\right)  V^{kj}, \label{e2.21}%
\end{equation}
where $\kappa=\kappa\left(  \mathbf{III}\left(  n\right)  \right)  $ and%
\begin{equation}
B_{3}^{jk}\left(  \mathbf{z}_{{}}\right)  :=\sum_{\alpha,\beta}V_{\alpha\beta
}\left(  1-\delta_{j\alpha}\right)  \left(  1-\delta_{k\beta}\right)
b_{j\alpha}\overline{b_{k\beta}}=4\left[  V^{-1}\left(  \mathbf{z}_{{}}^{\ast
}\right)  -I_{n}\right]  _{jk}. \label{e2.22}%
\end{equation}

\end{enumerate}
\end{lemma}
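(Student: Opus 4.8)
The plan is to reduce every identity in both parts to the first-order formulas of Lemma~\ref{l2.4}, taken on the diagonal $\mathbf{w}=\mathbf{z}$ (note that $W(\mathbf{z},\mathbf{z})=V(\mathbf{z})$ and $b_{j\alpha}=c_{j\alpha}(\mathbf{z},\mathbf{z})$), together with the elementary identities $\partial\log\det V=\operatorname{tr}(V^{-1}\partial V)$ and $\partial V^{-1}=-V^{-1}(\partial V)V^{-1}$. Specializing Lemma~\ref{l2.4} gives, on $\mathbf{II}(n)$,
\[
b_{j\alpha}=-(2-\delta_{j\alpha})[\mathbf{z}^{\ast}V^{-1}]_{j\alpha},\qquad \overline{b_{k\beta}}=-(2-\delta_{k\beta})[V^{-1}\mathbf{z}]_{k\beta},
\]
and, on $\mathbf{III}(n)$, $b_{j\alpha}=2[\mathbf{z}^{\ast}V^{-1}]_{j\alpha}$, $\overline{b_{k\beta}}=-2[V^{-1}\mathbf{z}]_{k\beta}$; moreover, as already observed in the proof of Lemma~\ref{l2.4}, on $\mathbf{III}(n)$ the matrices $\mathbf{z}^{\ast}V^{-1}$ and $V^{-1}\mathbf{z}$ are anti-symmetric, hence have vanishing diagonal.

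For the identities $B_{2}^{jk}$, $B_{3}^{jk}$ I would substitute these formulas into the definitions. On $\mathbf{II}(n)$ the numerator factors $(2-\delta_{j\alpha})(2-\delta_{k\beta})$ cancel the denominators $(1-\delta_{j\alpha}/2)(1-\delta_{k\beta}/2)$ up to the constant $4$, while on $\mathbf{III}(n)$ the weights $(1-\delta_{j\alpha})(1-\delta_{k\beta})$ may simply be deleted, since the entries they would remove vanish by the zero-diagonal remark; in both cases one is left with $\pm 4\sum_{\alpha,\beta}[\mathbf{z}^{\ast}V^{-1}]_{j\alpha}V_{\alpha\beta}[V^{-1}\mathbf{z}]_{k\beta}$. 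Now $\sum_{\alpha}[\mathbf{z}^{\ast}V^{-1}]_{j\alpha}V_{\alpha\beta}=[\mathbf{z}^{\ast}]_{j\beta}$ and $\mathbf{z}\mathbf{z}^{\ast}=I_{n}-V$, so the remaining sum telescopes; on $\mathbf{III}(n)$ the anti-symmetry of $\mathbf{z}$ contributes an extra sign that exactly offsets the sign already present in $\overline{b_{k\beta}}$, and in both cases the answer equals $4(V^{kj}-\delta_{jk})$. Finally $V(\mathbf{z}^{\ast})=\overline{V(\mathbf{z})}=V(\mathbf{z})^{t}$ for both types, so $V^{kj}-\delta_{jk}=[V^{-1}(\mathbf{z}^{\ast})-I_{n}]_{jk}$, which is exactly $(\ref{e2.20})$ and $(\ref{e2.22})$.

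For the identities $A_{2}^{jk}$, $A_{3}^{jk}$ I would compute the mixed second derivative of $\log\det V$ by the product rule,
\[
\frac{\partial^{2}\log\det V}{\partial z_{j\alpha}\partial\bar{z}_{k\beta}}=-\operatorname{tr}\!\left(V^{-1}\frac{\partial V}{\partial\bar{z}_{k\beta}}V^{-1}\frac{\partial V}{\partial z_{j\alpha}}\right)+\operatorname{tr}\!\left(V^{-1}\frac{\partial^{2}V}{\partial z_{j\alpha}\partial\bar{z}_{k\beta}}\right),
\]
express every derivative of $V=I_{n}-\mathbf{z}\mathbf{z}^{\ast}$ through elementary matrices $\mathbf{E}_{ab}$ (with the signs and $(1-\delta/2)$-factors dictated by the symmetry type of the domain), and then contract against $V_{\alpha\beta}$ with the weights appearing in $\Delta_{2}^{jk}$, resp.\ $\Delta_{3}^{jk}$. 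The only tools needed are $\mathbf{E}_{ab}M\mathbf{E}_{cd}=M_{bc}\mathbf{E}_{ad}$, $\operatorname{tr}(V^{-1}\mathbf{E}_{ab})=V^{ba}$, the contractions $\sum_{\beta}V_{\alpha\beta}V^{\beta j}=\delta_{\alpha j}$ and $\operatorname{tr}(V^{-1}V)=n$, and the identity $\mathbf{z}^{\ast}V^{-1}\mathbf{z}=(I_{n}-\mathbf{z}^{\ast}\mathbf{z})^{-1}-I_{n}=\overline{V^{-1}}-I_{n}$ (valid for both types because $I_{n}-\mathbf{z}^{\ast}\mathbf{z}=\overline{V}$). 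A large cancellation then takes place: the mixed derivative collapses, on $\mathbf{III}(n)$, to $2(V^{k\alpha}V^{\beta j}-V^{kj}V^{\beta\alpha})$ (so the weights $(1-\delta_{j\alpha})(1-\delta_{k\beta})$ are inert), and on $\mathbf{II}(n)$ the weighted summand reduces to $-2V_{\alpha\beta}(V^{kj}V^{\beta\alpha}+V^{k\alpha}V^{\beta j})$; contracting gives $\Delta_{2}^{jk}\log\det V=-2(n+1)V^{kj}$ and $\Delta_{3}^{jk}\log\det V=-2(n-1)V^{kj}$, and since $\kappa(\mathbf{II}(n))=(n+1)/2$ this is precisely $(\ref{e2.19})$ and $(\ref{e2.21})$.

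The actual difficulty is purely computational: one must carry the symmetry factors $\delta_{j\alpha},\delta_{k\beta}$ correctly through four elementary-matrix products and the ensuing contractions, and check by hand that all $\operatorname{tr}(V)$- and $\delta_{jk}$-terms cancel. On $\mathbf{III}(n)$ one must additionally verify that reinstating or deleting the weights $(1-\delta_{j\alpha})(1-\delta_{k\beta})$ changes nothing — for $B_{3}^{jk}$ this follows from the vanishing diagonals of $\mathbf{z}^{\ast}V^{-1}$ and $V^{-1}\mathbf{z}$, and for $A_{3}^{jk}$ from the fact that the surviving combination $V^{k\alpha}V^{\beta j}-V^{kj}V^{\beta\alpha}$ is zero whenever $\alpha=j$ or $\beta=k$.
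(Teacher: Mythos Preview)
Your proposal is correct and follows essentially the same route as the paper: the $B$-identities are obtained exactly as you describe, by specializing Lemma~\ref{l2.4} at $\mathbf{w}=\mathbf{z}$, cancelling the symmetry weights, and collapsing the matrix product via $\mathbf{z}^{\ast}V^{-1}\mathbf{z}=V^{-1}(\mathbf{z}^{\ast})-I_{n}$. For the $A$-identities the paper simply cites Lu~\cite{lu1963} for the closed formulas for $\partial^{2}\log\det V/\partial z_{j\alpha}\partial\bar z_{k\beta}$ (namely $-2(1-\tfrac{\delta_{j\alpha}}{2})(1-\tfrac{\delta_{k\beta}}{2})(V^{\beta j}V^{k\alpha}+V^{\beta\alpha}V^{kj})$ on $\mathbf{II}(n)$ and $2(V^{\beta j}V^{k\alpha}-V^{\beta\alpha}V^{kj})$ on $\mathbf{III}(n)$) rather than deriving them from $\partial V^{-1}=-V^{-1}(\partial V)V^{-1}$ as you propose, and then contracts exactly as you do; your observation that the weights $(1-\delta_{j\alpha})(1-\delta_{k\beta})$ are inert on $\mathbf{III}(n)$ is in fact a slight streamlining of the paper's term-by-term bookkeeping.
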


\begin{proof}
On $\mathbf{II}\left(  n\right)  $, it is known from Lu \cite{lu1963}%
\[
\frac{\partial^{2}\log\det V\left(  \mathbf{z}_{{}}\right)  }{\partial
z_{j\alpha}\partial\bar{z}_{k\beta}}=-2\left(  1-\frac{\delta_{j\alpha}}%
{2}\right)  \left(  1-\frac{\delta_{\beta k}}{2}\right)  \left(  V^{\beta
j}V^{k\alpha}+V^{\beta\alpha}V^{kj}\right)  .
\]
This implies that%
\begin{align*}
A_{2}^{jk}\left(  \mathbf{z}_{{}}\right)   &  =\frac{2}{n+1}\Delta_{2}%
^{jk}\log\det V\left(  \mathbf{z}_{{}}\right) \\
&  =\frac{2}{n+1}\sum_{\alpha,\beta=1}^{n}\frac{V_{\alpha\beta}}{\left(
1-\delta_{j\alpha}/2\right)  \left(  1-\delta_{k\beta}/2\right)  }%
\frac{\partial^{2}\log\det V\left(  \mathbf{z}_{{}}\right)  }{\partial
z_{j\alpha}\partial\bar{z}_{k\beta}}\\
&  =-\frac{4}{n+1}\sum_{\alpha,\beta=1}^{n}V_{\alpha\beta}\left(  V^{\beta
j}V^{k\alpha}+V^{\beta\alpha}V^{kj}\right) \\
&  =-\frac{4}{n+1}\left(  V^{kj}+nV^{kj}\right) \\
&  =-4V^{kj}.
\end{align*}

With the notations $b_{j\alpha}\left(  \mathbf{z}_{{}}\right)  =c_{j\alpha
}\left(  \mathbf{z},\mathbf{z}\right)  $, $V\left(  \mathbf{z}\right)
=W\left(  \mathbf{z},\mathbf{z}\right)  $ and the fact that $V^{-1}\left(
\mathbf{z}\right)  \mathbf{z}$ is symmetric, Part (i) of Lemma \ref{l2.4}
implies%
\begin{align*}
B_{2}^{jk}\left(  \mathbf{z}\right)   &  =\sum_{\alpha,\beta}\frac
{V_{\alpha\beta}}{\left(  1-\delta_{j\alpha}/2\right)  \left(  1-\delta
_{k\beta}/2\right)  }c_{j\alpha}\left(  \mathbf{z}_{{}},\mathbf{z}_{{}%
}\right)  \overline{c_{k\beta}\left(  \mathbf{z}_{{}},\mathbf{z}_{{}}\right)
}\\
&  =4\sum_{\alpha,\beta}\frac{V_{\alpha\beta}}{\left(  1-\frac{\delta
_{j\alpha}}{2}\right)  \left(  1-\frac{\delta_{k\beta}}{2}\right)  }\left(
1-\frac{\delta_{j\alpha}}{2}\right)  \left[  \mathbf{z}_{{}}^{\ast}%
V^{-1}\left(  \mathbf{z}_{{}}\right)  \right]  _{j\alpha}\left(
1-\frac{\delta_{k\beta}}{2}\right)  \left[  V^{-1}\left(  \mathbf{z}_{{}%
}\right)  \mathbf{z}_{{}}\right]  _{k\beta}\\
&  =4\sum_{\alpha,\beta}V_{\alpha\beta}\left[  \mathbf{z}_{{}}^{\ast}%
V^{-1}\left(  \mathbf{z}_{{}}\right)  \right]  _{j\alpha}\left[  V^{-1}\left(
\mathbf{z}_{{}}\right)  \mathbf{z}_{{}}\right]  _{\beta k}\\
&  =4\left[  \mathbf{z}_{{}}^{\ast}V^{-1}\left(  \mathbf{z}_{{}}\right)
V\left(  \mathbf{z}\right)  V^{-1}\left(  \mathbf{z}_{{}}\right)
\mathbf{z}_{{}}\right]  _{jk}\\
&  =4\left[  \mathbf{z}_{{}}^{\ast}V^{-1}\left(  \mathbf{z}_{{}}\right)
\mathbf{z}_{{}}\right]  _{jk}\\
&  =4\left[  V^{-1}\left(  \mathbf{z}_{{}}^{\ast}\right)  -I_{n}\right]
_{jk}.
\end{align*}

On $\mathbf{III}\left(  n\right)  $, according to Lu \cite{lu1963}, one has%
\[
\frac{\partial^{2}\log\det V\left(  \mathbf{z}_{{}}\right)  }{\partial
z_{j\alpha}\partial\bar{z}_{k\beta}}=2\left(  V^{\beta j}V^{k\alpha}%
-V^{\beta\alpha}V^{kj}\right)  .
\]
This implies%
\begin{align*}
A_{3}^{jk}\left(  \mathbf{z}_{{}}\right)   &  =\frac{1}{\kappa}\Delta_{3}%
^{jk}\log\det V\left(  \mathbf{z}_{{}}\right)  \\
&  =\frac{1}{\kappa}\sum_{\alpha,\beta=1}^{n}\left(  1-\delta_{j\alpha
}\right)  \left(  1-\delta_{k\beta}\right)  V_{\alpha\beta}2\left(  V^{\beta
j}V^{k\alpha}-V^{\beta\alpha}V^{kj}\right)  \\
&  =\frac{2}{\kappa}\sum_{\alpha=1}^{n}\left(  1-\delta_{j\alpha}\right)
\left[  V^{k\alpha}\delta_{j\alpha}-V^{kj}-V_{\alpha k}V^{kj}V^{k\alpha
}+V_{\alpha k}V^{k\alpha}V^{kj}\right]  \\
&  =-\frac{2}{\kappa}\left(  n-1\right)  V^{kj}.
\end{align*}
Notice that $\mathbf{z}_{{}}^{\ast}V^{-1}\left(  \mathbf{z}_{{}}\right)  $ is
anti-symmetric, one has%
\[
\left(  1-\delta_{j\alpha}\right)  \left[  \mathbf{z}_{{}}^{\ast}V^{-1}\left(
\mathbf{z}_{{}}\right)  \right]  _{j\alpha}=\left[  \mathbf{z}_{{}}^{\ast
}V^{-1}\left(  \mathbf{z}_{{}}\right)  \right]  _{j\alpha}.
\]
Part (ii)$\ $of Lemma \ref{l2.4} implies%
\begin{align*}
B_{3}^{jk}\left(  \mathbf{z}\right)   &  =\sum_{\alpha,\beta}V_{\alpha\beta
}\left(  1-\delta_{j\alpha}\right)  \left(  1-\delta_{k\beta}\right)
c_{j\alpha}\left(  \mathbf{z}_{{}},\mathbf{z}_{{}}\right)  \overline
{c_{k\beta}\left(  \mathbf{z}_{{}},\mathbf{z}_{{}}\right)  }\\
&  =-4\sum_{\alpha,\beta}V_{\alpha\beta}\left(  1-\delta_{j\alpha}\right)
\left(  1-\delta_{k\beta}\right)  \left[  \mathbf{z}_{{}}^{\ast}V^{-1}\left(
\mathbf{z}_{{}}\right)  \right]  _{j\alpha}\left[  V^{-1}\left(
\mathbf{z}_{{}}\right)  \mathbf{z}_{{}}\right]  _{k\beta}\\
&  =4\left[  \mathbf{z}_{{}}^{\ast}V^{-1}\left(  \mathbf{z}_{{}}\right)
V\left(  \mathbf{z}\right)  V^{-1}\left(  \mathbf{z}_{{}}\right)
\mathbf{z}_{{}}\right]  _{jk}\\
&  =4\left[  \mathbf{z}^{\ast}V^{-1}\left(  \mathbf{z}_{{}}\right)
\mathbf{z}_{{}}\right]  _{jk}\\
&  =4\left[  V^{-1}\left(  \mathbf{z}_{{}}^{\ast}\right)  -I_{n}\right]
_{jk}.
\end{align*}
Therefore, the proof of the lemma is complete.
\end{proof}

\begin{lemma}
\label{l2.6}Let%
\begin{equation}
C_{2}^{jk}\left(  \mathbf{z}_{{}},\mathbf{w}_{{}}\right)  :=\sum_{\alpha
,\beta}\frac{V_{\alpha\beta}c_{j\alpha}\left(  \mathbf{z}_{{}},\mathbf{w}_{{}%
}\right)  \overline{c_{k\beta}}\left(  \mathbf{z}_{{}},\mathbf{w}_{{}}\right)
}{\left(  1-\delta_{j\alpha}/2\right)  \left(  1-\delta_{k\beta}/2\right)  }
\label{ee2.23}%
\end{equation}
and
\begin{equation}
C_{3}^{jk}\left(  \mathbf{z}_{{}},\mathbf{w}_{{}}\right)  :=\sum_{\alpha
,\beta}V_{\alpha\beta}\left(  1-\delta_{j\alpha}\right)  \left(
1-\delta_{k\beta}\right)  c_{j\alpha}\left(  \mathbf{z}_{{}},\mathbf{w}_{{}%
}\right)  \overline{c_{k\beta}}\left(  \mathbf{z}_{{}},\mathbf{w}_{{}}\right)
. \label{e2.23}%
\end{equation}
Then

\begin{enumerate}
\item[(i)] For $\mathbf{w}\in\mathcal{U}_{{}}\left(  \mathbf{II}\left(
n\right)  \right)  $ and $\mathbf{z}_{{}}\in\mathbf{II}\left(  n\right)  $,
one has%
\begin{equation}
C_{2}^{jk}\left(  \mathbf{z},\mathbf{w}\right)  =4\left[  W^{-1}\left(
\mathbf{z}_{{}}^{\ast},\mathbf{w}_{{}}^{\ast}\right)  +W^{-1}\left(
\mathbf{w}_{{}}^{\ast},\mathbf{z}_{{}}^{\ast}\right)  -I_{n}\right]  _{jk}.
\label{e2.24}%
\end{equation}

\item[(ii)] For $\mathbf{w}_{{}}\in\mathcal{U}_{{}}\left(  \mathbf{III}\left(
n\right)  \right)  $ and $\mathbf{z}_{{}}\in\mathbf{III}\left(  n\right)  $,
one has
\begin{equation}
C_{3}^{jk}\left(  \mathbf{z}_{{}},\mathbf{w}_{{}}\right)  =4\left[
W^{-1}\left(  \mathbf{z}_{{}}^{\ast},\mathbf{w}_{{}}^{\ast}\right)
+W^{-1}\left(  \mathbf{w}_{{}}^{\ast},\mathbf{z}_{{}}^{\ast}\right)
-I_{n}-F\left(  \mathbf{z}_{{}}\mathbf{,\mathbf{w}_{{}}}\right)  \right]
_{jk}, \label{e2.25}%
\end{equation}
where%
\begin{equation}
F\left(  \mathbf{z}_{{}},\mathbf{w}_{{}}\right)  :=W^{-1}\left(
\mathbf{w}_{{}}^{\ast},\mathbf{z}_{{}}^{\ast}\right)  \left(  I_{n}%
-\mathbf{w}_{{}}^{\ast}\mathbf{w}_{{}}\right)  W^{-1}\left(  \mathbf{z}_{{}%
}^{\ast},\mathbf{w}_{{}}^{\ast}\right)  . \label{e2.26}%
\end{equation}
In particular, when $n$ is even and $\mathbf{w}_{{}}\in\mathcal{U}_{{}}\left(
\mathbf{III}\left(  n\right)  \right)  $, one has $I_{n}-\mathbf{w}_{{}}%
^{\ast}\mathbf{w}_{{}}=0$ and $F\left(  \mathbf{z}_{{}},\mathbf{w}_{{}%
}\right)  =0.$\qquad
\end{enumerate}
\end{lemma}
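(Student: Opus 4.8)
The plan is to compute $C_2^{jk}$ and $C_3^{jk}$ directly by substituting the explicit formulas for the $c_{j\alpha}$ from Lemma \ref{l2.4}, exactly as was done for $B_2^{jk}$ and $B_3^{jk}$ in the proof of Lemma \ref{l2.5}, the only new feature being that now $\mathbf{w}$ need not equal $\mathbf{z}$, so the ``sandwich'' $W^{-1}(\mathbf{z},\mathbf{w})\,V(\mathbf{z})\,W^{-1}(\mathbf{w},\mathbf{z})$ no longer collapses to a single inverse. First, for Part (i) on $\mathbf{II}(n)$: insert \eqref{e2.14} and \eqref{e2.15} into \eqref{ee2.23}. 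The factors $(2-\delta_{j\alpha})$ and $(2-\delta_{k\beta})$ combine with the denominators $(1-\delta_{j\alpha}/2)(1-\delta_{k\beta}/2)$ to give a clean factor of $4$, leaving
\[
C_2^{jk}(\mathbf{z},\mathbf{w}) = 4\sum_{\alpha,\beta} V_{\alpha\beta}\,\bigl[\mathbf{w}^\ast W^{-1}(\mathbf{z},\mathbf{w})\bigr]_{j\alpha}\bigl[W^{-1}(\mathbf{w},\mathbf{z})\mathbf{w}\bigr]_{k\beta},
\]
which is $4\bigl[\mathbf{w}^\ast W^{-1}(\mathbf{z},\mathbf{w})\,V(\mathbf{z})\,W^{-1}(\mathbf{w},\mathbf{z})\mathbf{w}\bigr]_{jk}$ once one rewrites the $\beta$-index as a transpose using the symmetry \eqref{e2.18}. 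The main computational step is then a matrix identity: writing $V(\mathbf{z}) = I_m - \mathbf{z}\mathbf{z}^\ast$ and $W(\mathbf{z},\mathbf{w}) = I_m - \mathbf{z}\mathbf{w}^\ast$, one shows $\mathbf{w}^\ast W^{-1}(\mathbf{z},\mathbf{w}) V(\mathbf{z}) W^{-1}(\mathbf{w},\mathbf{z})\mathbf{w} = W^{-1}(\mathbf{z}^\ast,\mathbf{w}^\ast) + W^{-1}(\mathbf{w}^\ast,\mathbf{z}^\ast) - I_n$; this is the analogue, with $\mathbf{w}\neq\mathbf{z}$, of the identity $\mathbf{z}^\ast V^{-1}\mathbf{z} = V^{-1}(\mathbf{z}^\ast) - I_n$ used in Lemma \ref{l2.5}, and it is proved by the same kind of manipulation (multiply out, use $I - \mathbf{z}\mathbf{w}^\ast$ invertibility, and the identity $\mathbf{w}^\ast(I-\mathbf{z}\mathbf{w}^\ast)^{-1} = (I - \mathbf{w}^\ast\mathbf{z})^{-1}\mathbf{w}^\ast$ which is the ``push-through'' identity for resolvents).

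For Part (ii) on $\mathbf{III}(n)$ the computation is parallel but with an extra term. Inserting \eqref{e2.16}–\eqref{e2.17} into \eqref{e2.23}, the factors $(1-\delta_{j\alpha})$ may be dropped against the anti-symmetry of $\mathbf{w}^\ast W^{-1}(\mathbf{z},\mathbf{w})$ (just as the analogous reduction in Lemma \ref{l2.5}), and one arrives at
\[
C_3^{jk}(\mathbf{z},\mathbf{w}) = 4\bigl[\mathbf{w}^\ast W^{-1}(\mathbf{z},\mathbf{w})\,V(\mathbf{z})\,W^{-1}(\mathbf{w},\mathbf{z})\,\mathbf{w}\bigr]_{jk}
\]
— the same expression as in the symmetric case, but now the matrix identity one needs is slightly different because on $\mathbf{III}(n)$ the relevant transpose identity carries a sign. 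Here the bookkeeping produces, in addition to $W^{-1}(\mathbf{z}^\ast,\mathbf{w}^\ast) + W^{-1}(\mathbf{w}^\ast,\mathbf{z}^\ast) - I_n$, a correction term; tracking it carefully, using $V(\mathbf{w}) = I - \mathbf{w}\mathbf{w}^\ast$ and hence $\mathbf{w}^\ast\mathbf{w} = I - V(\mathbf{w}^\ast)$-type rearrangements, yields exactly $-F(\mathbf{z},\mathbf{w}) = -W^{-1}(\mathbf{w}^\ast,\mathbf{z}^\ast)(I_n - \mathbf{w}^\ast\mathbf{w})W^{-1}(\mathbf{z}^\ast,\mathbf{w}^\ast)$.

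The final assertion — that $F(\mathbf{z},\mathbf{w}) = 0$ when $n$ is even and $\mathbf{w}\in\mathcal{U}(\mathbf{III}(n))$ — is immediate once one knows the \v{S}ilov boundary of $\mathbf{III}(n)$: for even $n$ it consists of the unitary (equivalently, with $\mathbf{w}^t = -\mathbf{w}$, the maximal-rank) anti-symmetric matrices, for which $\mathbf{w}^\ast\mathbf{w} = I_n$, so the middle factor $I_n - \mathbf{w}^\ast\mathbf{w}$ vanishes identically. I expect the main obstacle to be the matrix-identity step establishing $\mathbf{w}^\ast W^{-1}(\mathbf{z},\mathbf{w}) V(\mathbf{z}) W^{-1}(\mathbf{w},\mathbf{z})\mathbf{w}$ in closed form with the correct correction term in the anti-symmetric case — in particular getting the signs right so that the $\mathbf{III}(n)$ computation produces precisely $-F$ and not some other rearrangement of it; everything else is a direct substitution mirroring the already-completed proof of Lemma \ref{l2.5}.
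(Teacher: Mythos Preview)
Your overall strategy matches the paper's: substitute the formulas from Lemma~\ref{l2.4}, collapse the weight factors to produce a clean $4$, recognise the resulting double sum as the $(j,k)$ entry of $\mathbf{w}^{\ast}W^{-1}(\mathbf{z},\mathbf{w})\,V(\mathbf{z})\,W^{-1}(\mathbf{w},\mathbf{z})\,\mathbf{w}$, and then simplify that matrix product. The push-through identity you cite is exactly what the paper uses (in transposed form, via \eqref{e2.27}).

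There is, however, a genuine gap in your explanation of \emph{why} Part~(i) has no correction term while Part~(ii) does. After the push-through step, \emph{both} the symmetric and the anti-symmetric computations land on the same expression
\[
W^{-1}(\mathbf{w}^{\ast},\mathbf{z}^{\ast})\bigl(\mathbf{w}^{\ast}\mathbf{w}-\mathbf{w}^{\ast}\mathbf{z}\mathbf{z}^{\ast}\mathbf{w}\bigr)W^{-1}(\mathbf{z}^{\ast},\mathbf{w}^{\ast}).
\]
The sign carried by the anti-symmetric transpose identities does \emph{not} produce the extra term $F$; those signs cancel in pairs and you arrive at the same sandwich in both cases. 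What actually distinguishes the two parts is the \v{S}ilov boundary hypothesis: for $\mathbf{w}\in\mathcal{U}(\mathbf{II}(n))$ one has $\mathbf{w}\mathbf{w}^{\ast}=I_{n}$ (symmetric unitary), so $\mathbf{w}^{\ast}\mathbf{w}=I_{n}$ and the middle factor becomes $I_{n}-\mathbf{w}^{\ast}\mathbf{z}\mathbf{z}^{\ast}\mathbf{w}$, which factors cleanly. In Part~(ii) the paper deliberately does \emph{not} invoke $\mathbf{w}^{\ast}\mathbf{w}=I_{n}$, because on $\mathcal{U}(\mathbf{III}(n))$ with $n$ odd an anti-symmetric matrix is singular and this fails; keeping $\mathbf{w}^{\ast}\mathbf{w}$ as is produces exactly the $-F(\mathbf{z},\mathbf{w})$ correction. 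Your Part~(i) sketch never invokes $\mathbf{w}\mathbf{w}^{\ast}=I_{n}$, and the push-through identity alone will leave you with the same $F$-type remainder as in Part~(ii); you must use the \v{S}ilov condition to kill it. Once you make that correction, your argument and the paper's coincide.
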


\begin{proof}
On $\mathbf{II}\left(  n\right)  $, notice that $\mathbf{w}_{{}}\mathbf{w}%
_{{}}^{\ast}=I_{n},$%
\begin{equation}
\mathbf{w}_{{}}^{\ast}W^{-1}\left(  \mathbf{z}_{{}},\mathbf{w}_{{}}\right)
=W^{-1}\left(  \mathbf{z}_{{}},\mathbf{w}_{{}}\right)  ^{t}\mathbf{w}_{{}%
}^{\ast}\text{ and }W^{-1}\left(  \mathbf{w}_{{}},\mathbf{z}_{{}}\right)
\mathbf{w}_{{}}=\mathbf{w}_{{}}W^{-1}\left(  \mathbf{w}_{{}},\mathbf{z}_{{}%
}\right)  ^{t},\label{e2.27}%
\end{equation}
one has%
\begin{align*}
&  \mathbf{w}_{{}}^{\ast}W^{-1}\left(  \mathbf{z}_{{}},\mathbf{w}_{{}}\right)
V\left(  \mathbf{z}_{{}}\right)  W^{-1}\left(  \mathbf{w}_{{}},\mathbf{z}_{{}%
}\right)  \mathbf{w}_{{}}\\
&  =W^{-1}\left(  \mathbf{z}_{{}},\mathbf{w}_{{}}\right)  ^{t}\mathbf{w}_{{}%
}^{\ast}\left(  I_{n}-\mathbf{z}_{{}}\mathbf{z}_{{}}^{\ast}\right)
\mathbf{w}_{{}}W^{-1}\left(  \mathbf{w}_{{}},\mathbf{z}_{{}}\right)  ^{t}\\
&  =W^{-1}\left(  \mathbf{w}_{{}}^{\ast},\mathbf{z}_{{}}^{\ast}\right)
\left(  I_{n}-\mathbf{w}_{{}}^{\ast}\mathbf{z}_{{}}\mathbf{z}_{{}}^{\ast
}\mathbf{w}_{{}}\right)  W^{-1}\left(  \mathbf{z}_{{}}^{\ast},\mathbf{w}_{{}%
}^{\ast}\right)  \\
&  =W^{-1}\left(  \mathbf{w}_{{}}^{\ast},\mathbf{z}_{{}}^{\ast}\right)
\left(  W\left(  \mathbf{\mathbf{w}_{{}}^{\ast},\mathbf{z}_{{}}^{\ast}%
}\right)  +\left(  I_{n}-W\left(  \mathbf{w}_{{}}^{\ast},\mathbf{z}_{{}}%
^{\ast}\right)  \right)  W\left(  \mathbf{z}_{{}}^{\ast},\mathbf{w}_{{}}%
^{\ast}\right)  \right)  W^{-1}\left(  \mathbf{z}_{{}}^{\ast},\mathbf{w}_{{}%
}^{\ast}\right)  \\
&  =\left(  I_{n}+\left(  W^{-1}\left(  \mathbf{w}_{{}}^{\ast},\mathbf{z}_{{}%
}^{\ast}\right)  -I_{n}\right)  W\left(  \mathbf{z}_{{}}^{\ast},\mathbf{w}%
_{{}}^{\ast}\right)  \right)  W^{-1}\left(  \mathbf{z}_{{}}^{\ast}%
,\mathbf{w}_{{}}^{\ast}\right)  \\
&  =W^{-1}\left(  \mathbf{z}_{{}}^{\ast},\mathbf{w}_{{}}^{\ast}\right)
+W^{-1}\left(  \mathbf{w}_{{}}^{\ast},\mathbf{z}_{{}}^{\ast}\right)  -I_{n}.
\end{align*}
By Part (i) of Lemma \ref{l2.4} and the identity above, one has%
\begin{align*}
C_{2}^{jk}\left(  \mathbf{z}_{{}},\mathbf{w}_{{}}\right)   &  =\sum
_{\alpha,\beta=1}^{n}\frac{V_{\alpha\beta}}{\left(  1-\delta_{j\alpha
}/2\right)  \left(  1-\delta_{k\beta}/2\right)  }c_{j\alpha}\left(
\mathbf{z}_{{}},\mathbf{w}_{{}}\right)  \overline{c_{k\beta}\left(
\mathbf{z}_{{}},\mathbf{w}_{{}}\right)  }\\
&  =4\sum_{\alpha,\beta}V_{\alpha\beta}\left[  \mathbf{w}_{{}}^{\ast}%
W^{-1}\left(  \mathbf{z}_{{}},\mathbf{w}_{{}}\right)  \right]  _{j\alpha
}\left[  W^{-1}\left(  \mathbf{w}_{{}},\mathbf{z}_{{}}\right)  \mathbf{w}_{{}%
}\right]  _{k\beta}\\
&  =4\left[  \mathbf{w}_{{}}^{\ast}W^{-1}\left(  \mathbf{z}_{{}}%
,\mathbf{w}_{{}}\right)  V\left(  \mathbf{z}_{{}}\right)  W^{-1}\left(
\mathbf{w}_{{}},\mathbf{z}_{{}}\right)  \mathbf{w}_{{}}\right]  _{jk}\\
&  =4\left[  W^{-1}\left(  \mathbf{z}_{{}}^{\ast},\mathbf{w}_{{}}^{\ast
}\right)  +W^{-1}\left(  \mathbf{w}^{\ast},\mathbf{z}_{{}}^{\ast}\right)
-I_{n}\right]  _{jk}.
\end{align*}
Therefore, Part (i) is proved.\medskip

On $\mathbf{III}\left(  n\right)  $, $\mathbf{w}_{{}}^{\ast}W^{-1}\left(
\mathbf{z}_{{}},\mathbf{w}_{{}}\right)  $ and $W^{-1}\left(  \mathbf{w}_{{}%
},\mathbf{z}_{{}}\right)  \mathbf{w}_{{}}$ are anti-symmetric, by Part (ii) of
Lemma \ref{l2.4}, one has%
\begin{align*}
C_{3}^{jk}\left(  \mathbf{z}_{{}},\mathbf{w}_{{}}\right)   &  =\sum
_{\alpha,\beta=1}^{n}\left(  1-\delta_{j\alpha}\right)  \left(  1-\delta
_{k\beta}\right)  V_{\alpha\beta}c_{j\alpha}\left(  \mathbf{z}_{{}}%
,\mathbf{w}_{{}}\right)  \overline{c_{k\beta}\left(  \mathbf{z}_{{}%
},\mathbf{w}_{{}}\right)  }\\
&  =-4\sum_{\alpha,\beta=1}^{n}\left(  1-\delta_{j\alpha}\right)  \left(
1-\delta_{k\beta}\right)  V_{\alpha\beta}\left[  \mathbf{w}_{{}}^{\ast}%
W^{-1}\left(  \mathbf{z}_{{}},\mathbf{w}_{{}}\right)  \right]  _{j\alpha
}\left[  W^{-1}\left(  \mathbf{w}_{{}},\mathbf{z}_{{}}\right)  \mathbf{w}_{{}%
}\right]  _{k\beta}\\
&  =4\left[  \mathbf{w}^{\ast}W^{-1}\left(  \mathbf{z}_{{}},\mathbf{w}_{{}%
}\right)  V\left(  \mathbf{z}_{{}}\right)  W^{-1}\left(  \mathbf{w}_{{}%
},\mathbf{z}_{{}}\right)  \mathbf{w}_{{}}\right]  _{jk}\\
&  =4\left[  W^{-1}\left(  \mathbf{z}_{{}},\mathbf{w}_{{}}\right)
^{t}\mathbf{w}_{{}}^{\ast}V\left(  \mathbf{z}_{{}}\right)  \mathbf{w}_{{}%
}W^{-1}\left(  \mathbf{w}_{{}},\mathbf{z}_{{}}\right)  ^{t}\right]  _{jk}%
\end{align*}
and%
\begin{align*}
&  W^{-1}\left(  \mathbf{z}_{{}},\mathbf{w}_{{}}\right)  ^{t}\mathbf{w}_{{}%
}^{\ast}V\left(  \mathbf{z}_{{}}\right)  \mathbf{w}_{{}}W^{-1}\left(
\mathbf{w}_{{}},\mathbf{z}_{{}}\right)  ^{t}\\
&  =W^{-1}\left(  \mathbf{w}_{{}}^{\ast},\mathbf{z}_{{}}^{\ast}\right)
\left(  \mathbf{w}_{{}}^{\ast}\mathbf{w}_{{}}-\mathbf{w}_{{}}^{\ast}%
\mathbf{z}_{{}}\mathbf{z}_{{}}^{\ast}\mathbf{w}_{{}}\right)  W^{-1}\left(
\mathbf{z}_{{}}^{\ast},\mathbf{w}_{{}}^{\ast}\right) \\
&  =W^{-1}\left(  \mathbf{w}_{{}}^{\ast},\mathbf{z}_{{}}^{\ast}\right)
\left(  \mathbf{w}_{{}}^{\ast}\mathbf{w}_{{}}-I_{n}+I_{n}-\mathbf{w}_{{}%
}^{\ast}\mathbf{z}_{{}}+\mathbf{w}_{{}}^{\ast}\mathbf{z}_{{}}-\mathbf{w}_{{}%
}^{\ast}\mathbf{z}_{{}}\mathbf{z}_{{}}^{\ast}\mathbf{w}_{{}}\right)
W^{-1}\left(  \mathbf{z}_{{}}^{\ast},\mathbf{w}_{{}}^{\ast}\right) \\
&  =W^{-1}\left(  \mathbf{\mathbf{z}_{{}}^{\ast}},\mathbf{w}^{\ast}\right)
+W^{-1}\left(  \mathbf{w}_{{}}^{\ast},\mathbf{z}_{{}}^{\ast}\right)
\mathbf{w}_{{}}^{\ast}\mathbf{z}_{{}}-W^{-1}\left(  \mathbf{w}_{{}}^{\ast
},\mathbf{z}_{{}}^{\ast}\right)  \left(  I_{n}-\mathbf{w}_{{}}^{\ast
}\mathbf{w}_{{}}\right)  W^{-1}\left(  \mathbf{z}_{{}}^{\ast},\mathbf{w}_{{}%
}^{\ast}\right) \\
&  =W^{-1}\left(  \mathbf{z}_{{}}^{\ast},\mathbf{w}_{{}}^{\ast}\right)
+W^{-1}\left(  \mathbf{w}_{{}}^{\ast},\mathbf{z}_{{}}^{\ast}\right)
-I_{n}-W^{-1}\left(  \mathbf{w}_{{}}^{\ast},\mathbf{z}_{{}}^{\ast}\right)
\left(  I_{n}-\mathbf{w}_{{}}^{\ast}\mathbf{w}_{{}}\right)  W^{-1}\left(
\mathbf{z}_{{}}^{\ast},\mathbf{w}_{{}}^{\ast}\right)  .
\end{align*}
This gives $\left(  \ref{e2.25}\right)  $ and $\left(  \ref{e2.26}\right)  .$
Therefore, the proof of the lemma is complete.
\end{proof}

\begin{lemma}
\label{l2.7}The following statements hold.

\begin{enumerate}
\item[(i)] For $\mathbf{z}_{{}}\in\mathbf{II}\left(  n\right)  $ and
$\mathbf{w}_{{}}\in\overline{\mathbf{II}\left(  n\right)  }$, one has
\begin{equation}
D_{2}^{jk}:=\sum_{\alpha,\beta}\frac{V_{\alpha\beta}b_{j\alpha}\overline
{c_{k\beta}}}{\left(  1-\delta_{j\alpha}/2\right)  \left(  1-\delta_{k\beta
}/2\right)  }=4\left[  W^{-1}\left(  \mathbf{z}_{{}}^{\ast},\mathbf{w}_{{}%
}^{\ast}\right)  -I_{n}\right]  _{jk} \label{e2.28}%
\end{equation}
and%
\begin{equation}
E_{2}^{jk}:=\sum_{\alpha,\beta}\frac{V_{\alpha\beta}c_{j\alpha}\overline
{b_{k\beta}}}{\left(  1-\delta_{j\alpha}/2\right)  \left(  1-\delta_{k\beta
}/2\right)  }=4\left[  W^{-1}\left(  \mathbf{w}_{{}}^{\ast},\mathbf{z}_{{}%
}^{\ast}\right)  -I_{n}\right]  _{jk}. \label{e2.29}%
\end{equation}

\item[(ii)] For $\mathbf{\mathbf{z}_{{}}\in III}\left(  n\right)  $ and
$\mathbf{\mathbf{w}_{{}}\in\overline{\mathbf{III}\left(  n\right)  }}$, one
has
\begin{equation}
D_{3}^{jk}:=\sum_{\alpha,\beta}V_{\alpha\beta}\left(  1-\delta_{j\alpha
}\right)  \left(  1-\delta_{k\beta}\right)  b_{j\alpha}\overline{c_{k\beta}%
}=4\left[  W^{-1}\left(  \mathbf{z}_{{}}^{\ast},\mathbf{w}_{{}}^{\ast}\right)
-I_{n}\right]  _{jk}\;\; \label{e2.30}%
\end{equation}
and%
\begin{equation}
E_{3}^{jk}:=\sum_{\alpha,\beta}V_{\alpha\beta}\left(  1-\delta_{j\alpha
}\right)  \left(  1-\delta_{k\beta}\right)  c_{j\alpha}\overline{b_{k\beta}%
}=4\left[  W^{-1}\left(  \mathbf{w}_{{}}^{\ast},\mathbf{z}_{{}}^{\ast}\right)
-I_{n}\right]  _{jk}. \label{e2.31}%
\end{equation}

\end{enumerate}
\end{lemma}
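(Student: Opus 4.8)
The plan is to repeat, with one ``diagonal'' factor in place of an off-diagonal one, the computations carried out for $B_{\ell}^{jk}$ and $C_{\ell}^{jk}$ in Lemmas \ref{l2.5} and \ref{l2.6}. Since $b_{j\alpha}(\mathbf{z})=c_{j\alpha}(\mathbf{z},\mathbf{z})$, setting $\mathbf{w}=\mathbf{z}$ in Lemma \ref{l2.4} gives, on $\mathbf{II}(n)$,
\[
b_{j\alpha}(\mathbf{z})=-(2-\delta_{j\alpha})\left[\mathbf{z}^{\ast}V^{-1}(\mathbf{z})\right]_{j\alpha},\qquad \overline{b_{k\beta}}(\mathbf{z})=-(2-\delta_{k\beta})\left[V^{-1}(\mathbf{z})\mathbf{z}\right]_{k\beta},
\]
and likewise $b_{j\alpha}(\mathbf{z})=2\left[\mathbf{z}^{\ast}V^{-1}(\mathbf{z})\right]_{j\alpha}$, $\overline{b_{k\beta}}(\mathbf{z})=-2\left[V^{-1}(\mathbf{z})\mathbf{z}\right]_{k\beta}$ on $\mathbf{III}(n)$, while $c_{j\alpha}(\mathbf{z},\mathbf{w})$ and $\overline{c_{k\beta}}(\mathbf{z},\mathbf{w})$ are given by \eqref{e2.14}--\eqref{e2.15} and \eqref{e2.16}--\eqref{e2.17}. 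First I would substitute these into the four definitions \eqref{e2.28}--\eqref{e2.31}. On $\mathbf{II}(n)$ the weight $(2-\delta_{j\alpha})(2-\delta_{k\beta})/[(1-\delta_{j\alpha}/2)(1-\delta_{k\beta}/2)]$ equals $4$; on $\mathbf{III}(n)$ the weights $(1-\delta_{j\alpha})(1-\delta_{k\beta})$ disappear because each of the matrices $\mathbf{z}^{\ast}V^{-1}(\mathbf{z})$, $V^{-1}(\mathbf{z})\mathbf{z}$, $\mathbf{w}^{\ast}W^{-1}(\mathbf{z},\mathbf{w})$, $W^{-1}(\mathbf{w},\mathbf{z})\mathbf{w}$ is anti-symmetric there, so its diagonal vanishes (these are exactly the facts recorded in the proofs of Lemmas \ref{l2.5}--\ref{l2.6}, specialized to $\mathbf{w}=\mathbf{z}$ where needed). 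In every case this reduces $D_{\ell}^{jk}$ and $E_{\ell}^{jk}$ to $4$ times a sum of the form $\sum_{\alpha,\beta}[\,\cdot\,]_{j\alpha}\,V_{\alpha\beta}(\mathbf{z})\,[\,\cdot\,]_{k\beta}$.

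Next I would collapse that double sum. Using the symmetry (on $\mathbf{II}(n)$) or anti-symmetry (on $\mathbf{III}(n)$) of $W^{-1}(\mathbf{w},\mathbf{z})\mathbf{w}$ for the $D$-terms and of $V^{-1}(\mathbf{z})\mathbf{z}$ for the $E$-terms, to rewrite $[\,\cdot\,]_{k\beta}$ as $\pm[\,\cdot\,]_{\beta k}$, and then inserting $V^{-1}(\mathbf{z})V(\mathbf{z})=I_{n}$, the sums telescope to
\[
D_{\ell}^{jk}=4\left[\mathbf{z}^{\ast}W^{-1}(\mathbf{w},\mathbf{z})\mathbf{w}\right]_{jk},\qquad E_{\ell}^{jk}=4\left[\mathbf{w}^{\ast}W^{-1}(\mathbf{z},\mathbf{w})\mathbf{z}\right]_{jk},\qquad \ell=2,3.
\]
Finally, from the identity $\mathbf{z}^{\ast}W(\mathbf{w},\mathbf{z})=W(\mathbf{z}^{\ast},\mathbf{w}^{\ast})\mathbf{z}^{\ast}$ (valid for arbitrary matrices, since both sides equal $\mathbf{z}^{\ast}-\mathbf{z}^{\ast}\mathbf{w}\mathbf{z}^{\ast}$) together with $\mathbf{z}^{\ast}\mathbf{w}=I_{n}-W(\mathbf{z}^{\ast},\mathbf{w}^{\ast})$, one gets
\[
\mathbf{z}^{\ast}W^{-1}(\mathbf{w},\mathbf{z})\mathbf{w}=W^{-1}(\mathbf{z}^{\ast},\mathbf{w}^{\ast})\mathbf{z}^{\ast}\mathbf{w}=W^{-1}(\mathbf{z}^{\ast},\mathbf{w}^{\ast})-I_{n},
\]
hence $D_{\ell}^{jk}=4[W^{-1}(\mathbf{z}^{\ast},\mathbf{w}^{\ast})-I_{n}]_{jk}$, and symmetrically $E_{\ell}^{jk}=4[W^{-1}(\mathbf{w}^{\ast},\mathbf{z}^{\ast})-I_{n}]_{jk}$. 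This is precisely \eqref{e2.28}--\eqref{e2.31}.

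The only genuinely delicate point is the bookkeeping of the $\delta$-weights together with the transposes, exactly as in Lemmas \ref{l2.5}--\ref{l2.6}; there is no analytic difficulty. It is worth recording why no correction term of the type $F(\mathbf{z},\mathbf{w})$ from \eqref{e2.26} (which obstructed Lemma \ref{l2.6}(ii) away from the \v{S}ilov boundary) appears here: because $D_{\ell}^{jk}$ and $E_{\ell}^{jk}$ involve only a single off-diagonal factor $c$ rather than the product $c_{j\alpha}\overline{c_{k\beta}}$, the quantity left after telescoping is $\mathbf{z}^{\ast}\mathbf{w}$ (resp.\ $\mathbf{w}^{\ast}\mathbf{z}$), and this equals $I_{n}-W(\mathbf{z}^{\ast},\mathbf{w}^{\ast})$ identically, with no use of the relation $I_{n}-\mathbf{w}^{\ast}\mathbf{w}=0$. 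Consequently the stated formulas hold for all $\mathbf{w}\in\overline{\mathbf{II}(n)}$ and $\mathbf{w}\in\overline{\mathbf{III}(n)}$, not merely for $\mathbf{w}$ in the \v{S}ilov boundary.
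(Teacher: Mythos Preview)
Your proposal is correct and follows essentially the same route as the paper's proof. Both substitute the formulas from Lemma~\ref{l2.4} (with $\mathbf{w}=\mathbf{z}$ for the $b$-factors), eliminate the $\delta$-weights via the factor-of-$2$ cancellation on $\mathbf{II}(n)$ and the vanishing diagonals on $\mathbf{III}(n)$, collapse the double sum to a single matrix product using the (anti-)symmetry of $W^{-1}(\mathbf{w},\mathbf{z})\mathbf{w}$ or $V^{-1}(\mathbf{z})\mathbf{z}$ together with $V^{-1}V=I_n$, and finish with $\mathbf{z}^{\ast}\mathbf{w}=I_n-W(\mathbf{z}^{\ast},\mathbf{w}^{\ast})$. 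The only cosmetic difference is that the paper records the intermediate expression as $\mathbf{z}^{\ast}\mathbf{w}\,W^{-1}(\mathbf{z}^{\ast},\mathbf{w}^{\ast})$ (via an explicit transpose), whereas you write $\mathbf{z}^{\ast}W^{-1}(\mathbf{w},\mathbf{z})\mathbf{w}$ and then invoke the pull-through identity $\mathbf{z}^{\ast}W(\mathbf{w},\mathbf{z})=W(\mathbf{z}^{\ast},\mathbf{w}^{\ast})\mathbf{z}^{\ast}$; these are the same step. Your closing remark explaining why no $F(\mathbf{z},\mathbf{w})$ correction arises here (only one $c$-factor, so no $\mathbf{w}^{\ast}\mathbf{w}$ appears) is a useful addition not made explicit in the paper.
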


\begin{proof}
\bigskip On $\mathbf{II}\left(  n\right)  $, by Part (i) of Lemma \ref{l2.4},
one has%
\begin{align*}
D_{2}^{jk}  &  =4\sum_{\alpha,\beta=1}^{n}V_{\alpha\beta}\left[
\mathbf{z}_{{}}^{\ast}V^{-1}\left(  \mathbf{z}_{{}}\right)  \right]
_{j\alpha}\left[  W^{-1}\left(  \mathbf{w}_{{}},\mathbf{z}_{{}}\right)
\mathbf{w}_{{}}\right]  _{k\beta}\\
&  =4\left[  \mathbf{z}_{{}}^{\ast}\left(  W^{-1}\left(  \mathbf{w}_{{}%
},\mathbf{z}_{{}}\right)  \mathbf{w}_{{}}\right)  ^{t}\right]  _{jk}\\
&  =4\left[  \mathbf{z}_{{}}^{\ast}\mathbf{w}_{{}}W^{-1}\left(  \mathbf{z}%
_{{}}^{\ast},\mathbf{w}_{{}}^{\ast}\right)  \right]  _{jk}\\
&  =4\left[  W^{-1}\left(  \mathbf{z}_{{}}^{\ast},\mathbf{w}_{{}}^{\ast
}\right)  -I_{n}\right]  _{jk}%
\end{align*}
and%
\begin{align*}
E_{2}^{jk}  &  =4\sum_{\alpha,\beta=1}^{n}V_{\alpha\beta}\left[
\mathbf{w}_{{}}^{\ast}W^{-1}\left(  \mathbf{z}_{{}},\mathbf{w}_{{}}\right)
\right]  _{j\alpha}\left[  V^{-1}\left(  \mathbf{z}_{{}}\right)
\mathbf{z}_{{}}\right]  _{k\beta}\\
&  =4\left[  \mathbf{w}^{\ast}W^{-1}\left(  \mathbf{z},\mathbf{w}\right)
V\left(  \mathbf{z}\right)  V^{-1}\left(  \mathbf{z}\right)  \mathbf{z}%
\right]  _{jk}\\
&  =4\left[  \mathbf{w}^{\ast}W^{-1}\left(  \mathbf{z},\mathbf{w}\right)
\mathbf{z}\right]  _{jk}\\
&  =4\left[  W^{-1}\left(  \mathbf{w}_{{}}^{\ast},\mathbf{z}_{{}}^{\ast
}\right)  -I_{n}\right]  _{jk},
\end{align*}
Part (i) is proved.

On $\mathbf{III}\left(  n\right)  $, $\mathbf{z}_{{}}^{\ast}V^{-1}\left(
\mathbf{z}_{{}}\right)  $ and $W^{-1}\left(  \mathbf{w}_{{}},\mathbf{z}_{{}%
}\right)  \mathbf{w}_{{}}$ are anti-symmetric, one has%
\begin{align*}
D_{3}^{jk}  &  =-4\sum_{\alpha,\beta=1}^{n}\left(  1-\delta_{j\alpha}\right)
\left(  1-\delta_{k\beta}\right)  V_{\alpha\beta}\left[  \mathbf{z}_{{}}%
^{\ast}V^{-1}\left(  \mathbf{z}_{{}}\right)  \right]  _{j\alpha}\left[
W^{-1}\left(  \mathbf{w}_{{}},\mathbf{z}_{{}}\right)  \mathbf{w}_{{}}\right]
_{k\beta}\\
&  =4\left[  \mathbf{z}_{{}}^{\ast}V^{-1}\left(  \mathbf{z}_{{}}\right)
V\left(  \mathbf{z}\right)  \mathbf{w}_{{}}W^{-1}\left(  \mathbf{z}_{{}}%
^{\ast},\mathbf{w}_{{}}^{\ast}\right)  \right]  _{jk}\\
&  =4\left[  \mathbf{z}_{{}}^{\ast}\mathbf{w}_{{}}W^{-1}\left(  \mathbf{z}%
_{{}}^{\ast},\mathbf{w}_{{}}^{\ast}\right)  \right]  _{jk}\\
&  =4\left[  W^{-1}\left(  \mathbf{z}_{{}}^{\ast},\mathbf{w}_{{}}^{\ast
}\right)  -I_{n}\right]  _{jk}%
\end{align*}
and%
\begin{align*}
E_{3}^{jk}  &  =-4\sum_{\alpha,\beta=1}^{n}\left(  1-\delta_{j\alpha}\right)
\left(  1-\delta_{k\beta}\right)  V_{\alpha\beta}\left[  \mathbf{w}_{{}}%
^{\ast}W^{-1}\left(  \mathbf{z}_{{}},\mathbf{w}_{{}}\right)  \right]
_{j\alpha}\left[  V^{-1}\left(  \mathbf{z}_{{}}\right)  \mathbf{z}_{{}%
}\right]  _{k\beta}\\
&  =4\left[  \mathbf{w}_{{}}^{\ast}W^{-1}\left(  \mathbf{z}_{{}}%
,\mathbf{w}_{{}}\right)  V\left(  \mathbf{z}_{{}}\right)  V^{-1}\left(
\mathbf{z}_{{}}\right)  \mathbf{z}_{{}}\right]  _{jk}\\
&  =4\left[  \mathbf{w}_{{}}^{\ast}W^{-1}\left(  \mathbf{z}_{{}}%
,\mathbf{w}_{{}}\right)  \mathbf{z}_{{}}\right]  _{jk}\\
&  =4\left[  \mathbf{w}_{{}}^{\ast}\mathbf{z}_{{}}W^{-1}\left(  \mathbf{w}%
_{{}}^{\ast},\mathbf{z}_{{}}^{\ast}\right)  \right]  _{jk}\\
&  =4\left[  W^{-1}\left(  \mathbf{w}_{{}}^{\ast},\mathbf{z}_{{}}^{\ast
}\right)  -I_{n}\right]  _{jk}.
\end{align*}
Therefore, the proof of the lemma is complete.
\end{proof}

\noindent\textbf{The proof of Theorem \ref{t2.2}}.

\begin{proof}
Part (i) of Theorem\textbf{ }\ref{t2.2} was proved by Hua \cite{Hua19630}. We
start to prove Part (ii).

On $\mathbf{II}\left(  n\right)  $, by Propositions \ref{p2.1} and \ref{p2.3},
Lemmas \ref{l2.5}, \ref{l2.6} and \ref{l2.7}, for $\mathbf{z}_{{}}%
\in\mathbf{II}\left(  n\right)  $ and $\mathbf{w}_{{}}\in\mathcal{U}\left(
\mathbf{II}\left(  n\right)  \right)  $, one has%
\begin{align*}
&  \Delta_{2}^{jk}P^{\mathbf{II}\left(  n\right)  }\left(  \mathbf{z}_{{}%
},\mathbf{w}_{{}}\right) \\
&  =A_{2}^{jk}+B_{2}^{jk}+C_{2}^{jk}+D_{2}^{jk}+E_{2}^{jk}\\
&  =-4V^{kj}+4\left[  V^{-1}\left(  \mathbf{z}_{{}}^{\ast}\right)
-I_{n}\right]  _{jk}+4\left[  W^{-1}\left(  \mathbf{z}_{{}}^{\ast}%
,\mathbf{w}_{{}}^{\ast}\right)  +W^{-1}\left(  \mathbf{w}_{{}}^{\ast
},\mathbf{z}_{{}}^{\ast}\right)  -I_{n}\right]  _{jk}\\
&  -4\left[  W^{-1}\left(  \mathbf{z}_{{}}^{\ast},\mathbf{w}_{{}}^{\ast
}\right)  -I_{n}\right]  _{jk}-4\left[  W^{-1}\left(  \mathbf{w}_{{}}^{\ast
},\mathbf{z}_{{}}^{\ast}\right)  -I_{n}\right]  _{jk}\\
&  =0.
\end{align*}

On $\mathbf{III}\left(  n\right)  $ and $n$ is even, by Proposition \ref{p2.1}
and \ref{p2.3}, Lemmas \ref{l2.5}, \ref{l2.6} and \ref{l2.7}, for
$\mathbf{z}_{{}}\in\mathbf{III}\left(  n\right)  $ and $\mathbf{w}_{{}}%
\in\mathcal{U}\left(  \mathbf{III}\left(  n\right)  \right)  $, one has%
\begin{align*}
&  \Delta_{3}^{jk}P^{\mathbf{III}\left(  n\right)  }\left(  \mathbf{z}_{{}%
},\mathbf{w}_{{}}\right) \\
&  =A_{3}^{jk}+B_{3}^{jk}+C_{3}^{jk}+D_{3}^{jk}+E_{3}^{jk}\\
&  =-4V^{kj}+4\left[  V^{-1}\left(  \mathbf{z}_{{}}^{\ast}\right)
-I_{n}\right]  _{jk}+4\left[  W^{-1}\left(  \mathbf{z}_{{}}^{\ast}%
,\mathbf{w}_{{}}^{\ast}\right)  +W^{-1}\left(  \mathbf{w}_{{}}^{\ast
},\mathbf{z}_{{}}^{\ast}\right)  -I_{n}\right]  _{jk}\\
&  -4\left[  W^{-1}\left(  \mathbf{z}_{{}}^{\ast},\mathbf{w}_{{}}^{\ast
}\right)  -I_{n}\right]  _{jk}-4\left[  W^{-1}\left(  \mathbf{w}_{{}}^{\ast
},\mathbf{z}_{{}}^{\ast}\right)  -I_{n}\right]  _{jk}\\
&  =0.
\end{align*}
Therefore, the proof of the theorem is complete by the Poisson integral
formula $\left(  \ref{e1.12}\right)  $ for $u.$
\end{proof}

\section{Proof of Theorem \ref{t1.3}}

In this section, we will prove Theorem \ref{t1.3} by using the idea based on
the argument in Graham \cite{graham19831}.

Denoted by $\tilde{\Delta}$ the modified Laplace-Beltrami operator in the unit
ball $B^{n}\subset\mathbb{C}^{n}:$%
\begin{equation}
\tilde{\Delta}:=\sum_{j,k=1}^{n}\left(  \delta_{jk}-\left\vert z\right\vert
^{2}z_{j}\bar{z}_{k}\right)  \frac{\partial^{2}}{\partial z_{j}\partial\bar
{z}_{k}}. \label{e3.1}%
\end{equation}
This is a new operator which is not included in the cases of the
Laplace-Beltrami operators studied in Graham and Lee \cite{graham88}.

\begin{theorem}
\label{t3.1}Let $n>1$, $p,q\in\mathbb{N\cup}\left\{  0\right\}  .$ Let
$f_{p,q}\left(  z\right)  =\sum_{\left\vert \alpha\right\vert =p,\left\vert
\beta\right\vert =q}a_{\alpha\bar{\beta}}z^{\alpha}\bar{z}^{\beta}$ be
harmonic in $B_{n}$ and $u\in C^{2}\left(  B_{n}\right)  $ such that
\[
\left\{
\begin{array}
[c]{cc}%
\tilde{\Delta}u=0, & \text{in }B_{n};\\
u=f_{p,q}, & \text{on }\partial B_{n}.
\end{array}
\right.
\]
Then

\begin{enumerate}
\item[(i)] If $n$ is odd and $u\in C^{\frac{n+1}{2}}\left(  \overline{B_{n}%
}\right)  $ then $pq=0;$

\item[(ii)] If $n$ is even and $u\in C^{\frac{n}{2},\alpha}\left(
\overline{B_{n}}\right)  $ for some $\alpha>1/2$ then $pq=0.$
\end{enumerate}
\end{theorem}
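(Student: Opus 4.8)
The strategy is to solve the boundary value problem $\tilde\Delta u=0$, $u=f_{p,q}$ on $\partial B_n$ by expanding $u$ as a series in powers of $(1-|z|^2)$ with coefficients that are bihomogeneous harmonic polynomials, exactly as in Graham's original ball argument, and then to show that the series terminates (or fails to be smooth enough) unless $pq=0$. Write $t=1-|z|^2$ and look for a formal solution $u=\sum_{m\ge 0} c_m\, t^m\, g_m(z)$ where each $g_m$ is a combination of monomials $z^{\alpha}\bar z^{\beta}$ with $|\alpha|=p-j$, $|\beta|=q-j$ for appropriate shifts (since $\tilde\Delta$, like the invariant Laplacian, lowers bidegree in a controlled way). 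Substituting into $\tilde\Delta u=0$ and using $\partial_j\partial_{\bar k}(t^m g)$ together with the explicit symbol $\delta_{jk}-|z|^2 z_j\bar z_k$ produces a recursion of the form $P(m)\,c_{m}=(\text{lower terms})$, where $P$ is an explicit polynomial in $m$ (and in $p,q,n$); the indicial roots of $P$ are the key data.

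First I would compute $\tilde\Delta$ applied to $t^m z^{\alpha}\bar z^{\beta}$. Using $\partial_j t=-\bar z_j$, $\partial_{\bar k}t=-z_k$, one gets $\tilde\Delta(t^m h)=t^m\tilde\Delta h - m\,t^{m-1}\bigl[\text{first-order and zero-order terms in }h\bigr]+m(m-1)t^{m-2}(\cdots)$, and the crucial point is that the operator $\sum(\delta_{jk}-|z|^2z_j\bar z_k)\partial_j\partial_{\bar k}$ acting on a bihomogeneous $h$ of bidegree $(a,b)$ on $|z|^2=1$ reduces to a combination of $h$ itself and of the harmonic pieces, with the coefficient of $t^{m-1}$ controlled by a linear expression in $m,a,b,n$. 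Comparing powers of $t$ gives $c_{m}\,R(m)=c_{m-1}\,S(m)$ with $R(m)$ an explicit quadratic (or product of two linear factors) in $m$; the relevant indicial equation will have a root at $m=$ something like $\tfrac{n+1}{2}$ (odd $n$) or $\tfrac n2$ (even $n$), reflecting the $(1-|z|^2)^{\kappa}\log(1-|z|^2)$ phenomenon. When $pq\ne0$ the recursion does not terminate before hitting that root, forcing a $t^{\kappa}\log t$ term (or, in the even case, a $t^{n/2}$ term whose presence is incompatible with $C^{n/2,\alpha}$, $\alpha>1/2$) in the expansion of $u$, contradicting the assumed regularity; when $pq=0$ the polynomial $f_{p,q}$ is already pluriharmonic and $u=f_{p,q}$ solves the equation trivially, so the series is finite.

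Concretely the steps are: (1) set up the ansatz $u=\sum_m c_m t^m g_m$ and derive the recursion by a direct substitution, identifying the indicial polynomial $R(m)$; (2) locate the nonnegative integer root $m_0$ of $R$ — this will be $\tfrac{n+1}{2}$ for odd $n$ and $\tfrac n2$ for even $n$ — and observe that the recursion can proceed freely only for $m<m_0$; (3) show that if $pq\ne0$ then the coefficients $c_0,\dots,c_{m_0-1}$ are nonzero and the equation at level $m=m_0$ cannot be satisfied by a pure power series, so a $t^{m_0}\log t$ term must be inserted (Graham's mechanism), giving $u=G+H\,t^{\kappa}\log t$ with $H\not\equiv0$ near $\partial B_n$; (4) deduce that such $u$ fails to be $C^{(n+1)/2}$ (odd $n$) or fails to be $C^{n/2,\alpha}$ with $\alpha>1/2$ (even $n$, since $t^{n/2}\log t$ has a Hölder modulus of continuity at exponent exactly $1/2$ for the top derivative, never better). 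The main obstacle — and the step deserving the most care — is (3)–(4): one must verify that the coefficient $H|_{\partial B_n}$ is genuinely nonzero whenever $pq\neq 0$ (ruling out accidental cancellations among the lower coefficients), and one must pin down the sharp regularity threshold of the model singularity $t^{\kappa}\log t$ (respectively $t^{n/2}\log t$) precisely enough to contradict the hypothesis. Everything else is bookkeeping with the explicit symbol of $\tilde\Delta$ and binomial identities, much as in \cite{graham19831}.
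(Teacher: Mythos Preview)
Your overall plan is in the right spirit but contains two concrete errors that would derail the argument.

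\textbf{The ansatz is wrong.} You propose coefficients $g_m$ of \emph{shifting} bidegree $(p-j,q-j)$, on the grounds that $\tilde\Delta$ ``lowers bidegree in a controlled way.'' It does not. A direct computation (using $\sum_{j,k}z_j\bar z_k\partial_j\partial_{\bar k}f_{p,q}=R\bar R f_{p,q}=pq\,f_{p,q}$ and $\Delta f_{p,q}=0$) shows that
\[
\sum_{j,k}\bigl(\delta_{jk}-|z|^2 z_j\bar z_k\bigr)\partial_j\partial_{\bar k}f_{p,q}=-|z|^2\,pq\,f_{p,q},
\]
so $\tilde\Delta$ maps $h(r)f_{p,q}$ to (radial)\,$\times f_{p,q}$ with no bidegree change. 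The correct ansatz is therefore a \emph{single} radial profile, $u=h(|z|^4)\,f_{p,q}$, and the paper uses exactly this. With $t=|z|^4$ one lands on the hypergeometric equation
\[
t(1-t)h''+\Bigl[\tfrac{p+q+n+1}{2}-\bigl(\tfrac{p+q}{2}+1\bigr)t\Bigr]h'-\tfrac{pq}{4}\,h=0,
\]
so $h$ is (a normalization of) $F\bigl(\tfrac p2,\tfrac q2,\tfrac{p+q+n+1}{2};t\bigr)$, and the whole problem reduces to the standard connection formulae at $t=1$. There is no recursion to manage and no ``shifting'' family of $g_m$'s.

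\textbf{The even-$n$ singularity is misidentified.} The parameter governing the behaviour at $t=1$ is $c-a-b=\tfrac{n+1}{2}$. When $n$ is odd this is the integer $\tfrac{n+1}{2}$, and one gets $(1-t)^{(n+1)/2}\log(1-t)$; your prediction there is right. But when $n=2k$ is even, $c-a-b=k+\tfrac12$ is a half-integer, and the second solution carries a \emph{fractional power} $(1-t)^{k+1/2}$ with \emph{no logarithm}. Concretely the paper shows
\[
F\Bigl(\tfrac p2,\tfrac q2,\tfrac{p+q+2k+1}{2};t\Bigr)=H(t)+c\,(1-t)^{k+1/2}G_1(t),\qquad c\neq0,\ G_1(1)\neq0,
\]
and it is the term $(1-|z|^4)^{k+1/2}\sim(1-|z|^2)^{k+1/2}$ that lies in $C^{k,1/2}$ but in no $C^{k,\alpha}$ with $\alpha>\tfrac12$. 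Your claimed model singularity $t^{n/2}\log t=t^k\log t$ is wrong on its face (its $k$-th derivative behaves like $\log t$, so it is not even $C^k$, contradicting your own ``H\"older exponent exactly $1/2$'' remark), and it would give the wrong threshold.

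Once you replace the shifting-bidegree series by the radial ansatz and recognise the hypergeometric equation, the nonvanishing of the singular coefficient (your step~(3)) and the precise regularity threshold (your step~(4)) follow from the classical formulae for $F(a,b,c;t)$ near $t=1$; this is exactly what the paper does via its Lemmas~\ref{l3.2} and~\ref{l3.3}.
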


\begin{proof}
Following the argument of Graham \cite{graham19831}, we consider $h\left(
t\right)  $ on $\left[  0,1\right]  $ such that $h\left(  1\right)  =1$ and%
\[
\tilde{\Delta}\left(  h\left(  \left\vert z\right\vert ^{4}\right)
f_{p,q}\left(  z\right)  \right)  =0,\;\;z\in B_{n}.
\]
Notice that%

\begin{align*}
\frac{\partial^{2}\left(  f_{p,q}\left(  z\right)  h\left(  \left\vert
z\right\vert ^{4}\right)  \right)  }{\partial z_{j}\partial\bar{z}_{k}}  &
=f_{p,q}\left(  z\right)  \left[  2h^{\prime}\left(  \left\vert z\right\vert
^{4}\right)  \left(  z_{k}\bar{z}_{j}+\left\vert z\right\vert ^{2}\delta
_{jk}\right)  +4\left\vert z\right\vert ^{4}\bar{z}_{j}z_{k}h^{\prime\prime
}\left(  \left\vert z\right\vert ^{4}\right)  \right] \\
&  +h\left(  \left\vert z\right\vert ^{4}\right)  \frac{\partial^{2}f_{p,q}%
}{\partial z_{j}\partial\bar{z}_{k}}+2\left\vert z\right\vert ^{2}h^{\prime
}\left(  \left\vert z\right\vert ^{4}\right)  \left(  z_{k}\frac{\partial
f_{p,q}}{\partial z_{j}}+\bar{z}_{j}\frac{\partial f_{p,q}}{\partial z_{k}%
}\right)  ,
\end{align*}%
\begin{align*}
\lefteqn{\sum_{j,k=1}^{n}\left(  \delta_{jk}-\left\vert z\right\vert ^{2}%
z_{j}\bar{z}_{k}\right)  \left[  2h^{\prime}\left(  \left\vert z\right\vert
^{4}\right)  \left(  z_{k}\bar{z}_{j}+\left\vert z\right\vert ^{2}\delta
_{jk}\right)  +4\left\vert z\right\vert ^{4}\bar{z}_{j}z_{k}h^{\prime\prime
}\left(  \left\vert z\right\vert ^{4}\right)  \right]  }\\
&  =4h^{\prime\prime}\left(  \left\vert z\right\vert ^{4}\right)  \left\vert
z\right\vert ^{6}\left(  1-\left\vert z\right\vert ^{4}\right)  +2h^{\prime
}\left(  \left\vert z\right\vert ^{4}\right)  \left(  \left\vert z\right\vert
^{2}-\left\vert z\right\vert ^{6}+\left\vert z\right\vert ^{2}\left(
n-\left\vert z\right\vert ^{4}\right)  \right)
\end{align*}
and%
\begin{align*}
&  \sum_{j,k=1}^{n}\left(  \delta_{jk}-\left\vert z\right\vert ^{2}z_{j}%
\bar{z}_{k}\right)  \left(  2\left\vert z\right\vert ^{2}h^{\prime}\left(
\left\vert z\right\vert ^{4}\right)  \left(  z_{k}\frac{\partial f_{p,q}%
}{\partial z_{j}}+\bar{z}_{j}\frac{\partial f_{p,q}}{\partial z_{k}}\right)
+h\left(  \left\vert z\right\vert ^{4}\right)  \frac{\partial^{2}f_{p,q}%
}{\partial z_{j}\partial\bar{z}_{k}}\right) \\
&  =2\left\vert z\right\vert ^{2}h^{\prime}\left(  \left\vert z\right\vert
^{4}\right)  \left(  p+q\right)  \left(  1-\left\vert z\right\vert
^{4}\right)  f_{p,q}\left(  z\right)  -\left\vert z\right\vert ^{2}h\left(
\left\vert z\right\vert ^{4}\right)  pqf_{p,q}(z).
\end{align*}
Therefore,%
\begin{align*}
&  0=\tilde{\Delta}\left(  h\left(  \left\vert z\right\vert ^{4}\right)
\right)  f_{p,q}\left(  z\right) \\
&  =4h^{\prime\prime}\left(  \left\vert z\right\vert ^{4}\right)  \left\vert
z\right\vert ^{6}\left(  1-\left\vert z\right\vert ^{4}\right)  f_{p,q}\left(
z\right)  +2h^{\prime}\left(  \left\vert z\right\vert ^{4}\right)  \left(
\left\vert z\right\vert ^{2}-\left\vert z\right\vert ^{6}+\left\vert
z\right\vert ^{2}\left(  n-\left\vert z\right\vert ^{4}\right)  \right)
f_{p,q}\left(  z\right) \\
&  +2\left\vert z\right\vert ^{2}h^{\prime}\left(  \left\vert z\right\vert
^{4}\right)  \left(  p+q\right)  \left(  1-\left\vert z\right\vert
^{4}\right)  f_{p,q}-\left\vert z\right\vert ^{2}h\left(  \left\vert
z\right\vert ^{4}\right)  pqf_{p,q}\\
&  =4h^{\prime\prime}\left(  \left\vert z\right\vert ^{4}\right)  \left\vert
z\right\vert ^{6}\left(  1-\left\vert z\right\vert ^{4}\right)  f_{p,q}\left(
z\right) \\
&  +2h^{\prime}\left(  \left\vert z\right\vert ^{4}\right)  \left(  \left\vert
z\right\vert ^{2}\left(  n+1+\left(  p+q\right)  \right)  -\left\vert
z\right\vert ^{6}\left(  p+q+2\right)  \right)  f_{p,q}\left(  z\right) \\
&  -\left\vert z\right\vert ^{2}h\left(  \left\vert z\right\vert ^{4}\right)
pqf_{p,q}(z).
\end{align*}
With $t=|z|^{4}$, $h\left(  t\right)  $ satisfies the equation:%
\[
t\left(  1-t\right)  h^{\prime\prime}\left(  t\right)  +h^{\prime}\left(
t\right)  \left[  \frac{p}{2}+\frac{q}{2}+\frac{n+1}{2}-\left(  \frac{p}%
{2}+\frac{q}{2}+1\right)  t\right]  -\frac{p}{2}\frac{q}{2}h\left(  t\right)
=0.
\]
By the standard hypergeometric function theory \cite{graham19831} and
\cite{Slater66}, the smooth solution at $t=0$ must be
\begin{equation}
h\left(  t\right)  =\frac{F\left(  \frac{p}{2},\frac{q}{2},\frac{p+q+n+1}%
{2};t\right)  }{F\left(  \frac{p}{2},\frac{q}{2},\frac{p+q+n+1}{2};1\right)  }
\label{e3.4}%
\end{equation}
where%
\begin{equation}
F\left(  a,b,c;t\right)  =\sum_{n=0}^{+\infty}\frac{\left(  a\right)
_{n}\left(  b\right)  _{n}}{\left(  c\right)  _{n}}t^{n} \label{ee3.5}%
\end{equation}
and%
\begin{equation}
\left(  \alpha\right)  _{n}=\alpha\left(  \alpha+1\right)  \cdots\left(
\alpha+n-1\right)  . \label{ee3.6}%
\end{equation}

Assuming that $p,q>0,$ we will study the behavior of $h\left(  t\right)  $
near $t=1$ according to the value of $n.$

By the definition of $F\left(  a,b,c;t\right)  $ given by $\left(
\ref{ee3.5}\right)  $, it is easy to verify that%
\begin{equation}
\frac{d}{dt}F\left(  a,b,c;t\right)  =\frac{ab}{c}F\left(
a+1,b+1,c+1;t\right)  \;\text{if }abc\neq0. \label{ee3.7}%
\end{equation}

$\left(  \ref{ee3.7}\right)  $ and the following lemma about hypergeometric
function can be found in \cite{Slater66}.

\begin{lemma}
\label{l3.2}For $a,b,s>0$ with $a>s$ and $b>s,$ one has%
\begin{equation}
\lim_{t\rightarrow1^{-}}\frac{F\left(  a,b,a+b;t\right)  }{\log\frac{1}{1-t}%
}=\frac{\Gamma\left(  a+b\right)  }{\Gamma\left(  a\right)  \Gamma\left(
b\right)  }; \label{ee3.8}%
\end{equation}
Euler's identity:%
\begin{equation}
F\left(  a,b,a+b-s;t\right)  =\left(  1-t\right)  ^{-s}F\left(
b-s,a-s,a+b-s;t\right)  \label{ee3.9}%
\end{equation}
and%
\[
\lim_{t\rightarrow1^{-}}\left(  1-t\right)  ^{s}F\left(  a,b,a+b-s;t\right)
=\frac{\Gamma\left(  a+b-s\right)  \Gamma\left(  s\right)  }{\Gamma\left(
a\right)  \Gamma\left(  b\right)  }.
\]

\end{lemma}

To complete the proof of Theorem \ref{t3.1}, we need the following lemma.

\begin{lemma}
\label{l3.3}For $k\in\mathbb{N}$ and $p,q>0,$ one has

\begin{enumerate}
\item[(i)] There exist $H\in C^{k}\left(  \left[  0,1\right]  \right)  $ and
$G\in C^{k}\left(  \left[  0,1\right]  \right)  $ with $G\left(  1\right)
\neq0$ such that%
\begin{equation}
F\left(  \frac{p}{2},\frac{q}{2},\frac{2k+p+q}{2};t\right)  =H\left(
t\right)  +G\left(  t\right)  \left(  1-t\right)  ^{k}\log\left(  1-t\right)
. \label{ee3.11}%
\end{equation}

\item[(ii)] There exist $H\in C^{k+1}\left(  \left[  0,1\right]  \right)  $, a
constant $c\neq0$ such that%
\begin{equation}
F\left(  \frac{p}{2},\frac{q}{2},\frac{2k+p+q+1}{2};t\right)  =H\left(
t\right)  +c\left(  1-t\right)  ^{k+\frac{1}{2}}G_{1}\left(  t\right)  ,
\label{ee3.12}%
\end{equation}
where%
\begin{equation}
G_{1}\left(  t\right)  :=F\left(  k+\frac{q+1}{2},k+\frac{p+1}{2}%
,\frac{4k+p+q+3}{2};t\right)  . \label{ee3.13}%
\end{equation}
Moreover,%
\begin{equation}
\left(  1-t\right)  ^{k+\frac{1}{2}}G_{1}\left(  t\right)  \notin
C^{k+1}\left(  \left[  0,1\right]  \right)  . \label{ee3.14}%
\end{equation}

\end{enumerate}
\end{lemma}

\begin{proof}
Part (i) can be found in Graham \cite{graham19831}. The proof of Part (ii) may
be found through reading materials in \cite{Slater66}. For convenience for
readers, we sketch a proof here. By $\left(  \ref{ee3.7}\right)  $, one has%
\begin{align*}
F_{\ell}\left(  t\right)   &  :=\frac{d^{\ell}}{dt^{\ell}}F\left(  \frac{p}%
{2},\frac{q}{2},\frac{2k+p+q+1}{2};t\right) \\
&  =c_{\ell}F\left(  \ell+\frac{p}{2},\ell+\frac{q}{2},\ell+\frac{2k+p+q+1}%
{2};t\right)  ,
\end{align*}
where
\[
c_{0}=1\;\text{ and }c_{\ell}=\frac{\left(  \frac{p}{2}\right)  _{\ell}\left(
\frac{q}{2}\right)  _{\ell}}{\left(  \frac{2k+p+q+1}{2}\right)  _{\ell}}.
\]
Notice that $G_{1}\left(  t\right)  \in C\left(  \left[  0,1\right]  \right)
$. By $\left(  \ref{ee3.7}\right)  $ and $\left(  \ref{ee3.9}\right)  $, there
exists $\tilde{c}\neq0$ such that%
\[
\left(  1-t\right)  ^{\frac{1}{2}}\frac{d}{dt}G_{1}\left(  t\right)
=\tilde{c}F\left(  k+1+\frac{p}{2},k+1+\frac{q}{2},2k+\frac{p+q+5}{2}\right)
\in C\left(  \left[  0,1\right]  \right)  .
\]
Let%
\[
H_{1}\left(  t\right)  =c_{k}+\frac{c_{k+1}}{2}\int_{0}^{t}\left(  1-s\right)
^{\frac{1}{2}}\frac{dG_{1}\left(  s\right)  }{ds}ds.
\]
Then $H_{1}\left(  t\right)  \in C^{1}\left(  \left[  0,1\right]  \right)  .$
By $\left(  \ref{ee3.9}\right)  $ again,%
\[
F_{k+1}\left(  t\right)  =c_{k+1}\left(  1-t\right)  ^{-\frac{1}{2}}%
G_{1}\left(  t\right)  .
\]
The definition of $F_{\ell}$ implies%
\begin{align*}
F_{k}\left(  t\right)   &  =c_{k}+\int_{0}^{t}F_{k+1}\left(  s\right)
ds=c_{k}+c_{k+1}\int_{0}^{t}\left(  1-s\right)  ^{-\frac{1}{2}}G_{1}\left(
s\right)  ds\\
&  =H_{1}\left(  t\right)  -\frac{c_{k+1}}{2}\left(  1-t\right)  ^{\frac{1}%
{2}}G_{1}\left(  t\right)  .
\end{align*}
Let%
\[
H_{2}\left(  t\right)  =c_{k-1}+\int_{0}^{t}\left[  H_{1}\left(  s\right)
-\frac{1}{3}\left(  1-s\right)  ^{\frac{3}{2}}\frac{d}{ds}G_{1}\left(
s\right)  \right]  ds.
\]
Then $H_{2}\left(  t\right)  \in C^{2}\left(  \left[  0,1\right]  \right)  $
and%
\[
F_{k-1}=H_{2}\left(  t\right)  +\frac{c_{k+1}}{3}\left(  1-t\right)
^{\frac{3}{2}}G_{1}\left(  t\right)  .
\]
By induction, there exist $H\in C^{k+1}\left(  \left[  0,1\right]  \right)  $
and a constant $c\neq0$ such that%
\[
F\left(  \frac{p}{2},\frac{q}{2},\frac{2k+p+q+1}{2};t\right)  =H\left(
t\right)  +c\left(  1-t\right)  ^{k+\frac{1}{2}}G_{1}\left(  t\right)  .
\]
And%
\begin{align*}
\frac{d}{dt}\left(  \left(  1-t\right)  ^{\frac{1}{2}}G_{1}\left(  t\right)
\right)   &  =\frac{1}{2}\left(  1-t\right)  ^{-\frac{1}{2}}G_{1}\left(
t\right)  +\left(  1-t\right)  ^{\frac{1}{2}}\frac{dG_{1}}{dt}\left(  t\right)
\\
&  \notin C\left(  \left[  0,1\right]  \right)  .
\end{align*}
This implies that $\left(  1-t\right)  ^{k+\frac{1}{2}}G_{1}\left(  t\right)
\notin C^{k+1}\left(  \left[  0,1\right]  \right)  $ and the lemma is proved.
\end{proof}

Now we continue the proof of Theorem \ref{t3.1}. By $\left(  \ref{e3.4}%
\right)  ,$
\[
u\left(  z\right)  =h\left(  \left\vert z\right\vert ^{4}\right)
f_{p,q}\left(  z\right)  =\frac{F\left(  \frac{p}{2},\frac{q}{2}%
,\frac{p+q+n+1}{2};\left\vert z\right\vert ^{t}\right)  }{F\left(  \frac{p}%
{2},\frac{q}{2},\frac{p+q+n+1}{2};1\right)  }f_{p,q}\left(  z\right)  .
\]
We have the following two cases:

\begin{enumerate}
\item[(i)] When $n$ is odd and $pq\neq0$, by Part (i) of Lemma \ref{l3.2} with
$k=\frac{n+1}{2},$%
\[
h\left(  t\right)  =H\left(  t\right)  +G\left(  t\right)  \left(  1-t\right)
^{\frac{n+1}{2}}\log\left(  1-t\right)
\]
with $H,G\in C^{\infty}([0,1])$ and $G(1)\neq0$. Since $u\in C^{\frac{n+2}{2}%
}\left(  \overline{B_{n}}\right)  $ implies $h\left(  t\right)  \in
C^{\frac{n+1}{2}}\left(  \left[  0,1\right]  \right)  $. This is a
contradiction, which implies that $pq=0$. This proves the Part (i) of Theorem
\ref{t3.1}.

\item[(ii)] When $n=2k$ is even and $pq\neq0,$ by Part (ii) of Lemma
\ref{l3.2}, for any $\alpha>1/2,$ we know that $h\in C^{\infty}\left(
[0,1)\right)  \bigcap C^{k}\left(  \left[  0,1\right]  \right)  $ and%
\[
\left(  1-t\right)  ^{1-\alpha}\frac{d^{k+1}}{dt^{k+1}}h\left(  t\right)
\]
is unbounded on $\left[  0,1\right]  $. The assumption of Part (ii) of Theorem
\ref{t3.1} implies $h\in C^{k,\alpha}\left(  \left[  0,1\right]  \right)
\bigcap C^{\infty}\left(  [0,1)\right)  $ for some $\alpha>\frac{1}{2}$. This
is a contradiction, which implies that $pq=0$. This proves Part (ii) of
Theorem \ref{t3.1}.
\end{enumerate}

Therefore, the proof of Theorem \ref{t3.1} is complete.
\end{proof}

\noindent\textbf{Proof of Theorem \ref{t1.3}}

\begin{proof}
By the spherical harmonic expansions for $u$ on $\partial B_{n},$%
\[
u\left(  z\right)  =\sum_{p,q=0}^{+\infty}f_{p,q}\left(  z\right)
,\;\;z\in\partial B_{n}%
\]
where $f_{p,q}$ is a spherical harmonic function in $B_{n}$ of homogenous
degrees $\left(  p,q\right)  $. Then%
\[
u\left(  z\right)  =\sum_{p,q=0}^{+\infty}h_{p,q}\left(  \left\vert
z\right\vert ^{4}\right)  f_{p,q}\left(  z\right)  .
\]
By Theorem \ref{t3.1} and the assumption of Theorem \ref{t1.3}, one has that
$f_{p,q}=0$ if $pq\neq0$. This implies $u$ is pluriharmonic in $B_{n},$ and
the proof of Theorem \ref{t1.3} is complete.
\end{proof}

\section{Proof of Theorem \ref{t1.2}}

\noindent For a bounded domain $D\subset\mathbb{C}^{N}$, we use
$\operatorname{Aut}\left(  D\right)  $ to denote the automorphism group on
$D$. We say that $D$ is transitive or homogeneous if any two points $z,w\in D$
there is a $\phi\in\operatorname{Aut}(D)$ such that $\phi(z)=w$. $D$ is
symmetric if for any $z\in D$, there is $S_{z}\in\operatorname{Aut}(D)$ such
that $z$ is an isolated fixed point for $(S_{z})^{2}$.

\begin{proposition}
\label{p4.1}Let $D$ be a transitive domain in $\mathbb{C}^{N}$. Let
$\mathcal{A}(D)$ be a subset of $C^{2}(D)$ such that for any $\phi
\in\operatorname{Aut}(D)$ and $u\in\mathcal{A}(D)$ one has $u\circ\phi
\in\mathcal{A}(D)$. If there is a point $z_{0}\in D$ such that
\[
{\frac{\partial^{2}u(z_{0})}{\partial z_{j}\partial\overline{z}_{k}}}%
=0,\quad1\leq j,k\leq N,\ u\in\mathcal{A}(D),
\]
then $\mathcal{A}(D)$ is a subset of pluriharmonic functions on $D$.
\end{proposition}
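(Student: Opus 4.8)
The plan is to prove that every $u\in\mathcal A(D)$ has identically vanishing complex Hessian on $D$, since this is precisely the statement that $u$ is pluriharmonic. We already know this Hessian vanishes at the single point $z_0$; the job is to propagate this to every point of $D$ using transitivity together with the invariance property of $\mathcal A(D)$.

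First I would fix $u\in\mathcal A(D)$ and an arbitrary $w\in D$. By transitivity choose $\phi\in\operatorname{Aut}(D)$ with $\phi(z_0)=w$, and set $v:=u\circ\phi$. The hypothesis on $\mathcal A(D)$ gives $v\in\mathcal A(D)$, hence $\dfrac{\partial^2 v}{\partial z_j\partial\bar z_k}(z_0)=0$ for all $1\le j,k\le N$. Next I would invoke the transformation law of the complex Hessian under a holomorphic map: writing $\phi=(\phi_1,\dots,\phi_N)$ with each $\phi_a$ holomorphic, every second-order chain-rule term containing $\partial\phi_a/\partial\bar z_j$ or $\partial\bar\phi_b/\partial z_k$ drops out, leaving
\[
\frac{\partial^2 v}{\partial z_j\partial\bar z_k}(z_0)=\sum_{a,b=1}^{N}\frac{\partial^2 u}{\partial w_a\partial\bar w_b}(w)\,\frac{\partial\phi_a}{\partial z_j}(z_0)\,\overline{\frac{\partial\phi_b}{\partial z_k}(z_0)} .
\]
In matrix form, with $H_u(w)=\bigl[\partial^2 u/\partial w_a\partial\bar w_b(w)\bigr]$, $H_v(z_0)$ the analogous matrix for $v$, and $J=\bigl[\partial\phi_a/\partial z_j(z_0)\bigr]$ the holomorphic Jacobian of $\phi$ at $z_0$, this reads $H_v(z_0)=J^{t}\,H_u(w)\,\overline{J}$.

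Since every element of $\operatorname{Aut}(D)$ is a biholomorphism of the bounded domain $D$, its Jacobian $J$ is invertible; therefore $H_v(z_0)=0$ forces $H_u(w)=(J^{t})^{-1}H_v(z_0)\,\overline{J}^{\,-1}=0$. As $w$ ranges over all of $D$, the complex Hessian of $u$ vanishes identically on $D$, so $u$ is pluriharmonic; and as $u$ was an arbitrary member of $\mathcal A(D)$, the proposition follows.

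The argument is essentially bookkeeping, and I do not expect a serious obstacle: the only substantive inputs are the transformation rule for $\partial\bar\partial$ under holomorphic maps and the fact that automorphisms of a bounded domain have everywhere-invertible Jacobian. The one point deserving a line of care is writing out the second-order chain rule for $u\circ\phi$ and checking that the anti-holomorphic cross terms vanish because $\phi$ is holomorphic and $u\in C^2$; once that is in place, invertibility of $J$ closes the proof immediately.
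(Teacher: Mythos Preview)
Your proof is correct and follows essentially the same route as the paper: pick $\phi\in\operatorname{Aut}(D)$ sending $z_0$ to an arbitrary $w$, use the invariance of $\mathcal A(D)$ to get $H_{u\circ\phi}(z_0)=0$, apply the holomorphic chain rule to write this as $J^{t}H_u(w)\overline{J}$ (equivalently $\phi'(z_0)H_u(w)\phi'(z_0)^{*}$ in the paper's indexing), and invert the Jacobian. The only cosmetic difference is the convention for indexing the Jacobian matrix.
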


\begin{proof}
Let $u\in\mathcal{A}(D)$ be an arbitrary element. Then for any $w\in D$, since
$D$ is transitive, there is a $\phi\in\operatorname{Aut}(D)$ such that
\[
\phi(z_{0})=w.
\]
Since $\mathcal{A}(D)$ is invariant under automorphism, one has that
$u\circ\phi\in\mathcal{A}(D)$ and
\[
{\frac{\partial^{2}u\circ\phi}{\partial z_{j}\partial\overline{z}_{k}}}%
(z_{0})=0,\quad1\leq j,k\leq N.
\]
Let $H_{u}$ be the complex Hessian matrix of $u$ and let $\phi^{\prime
}(z)=\left[  {\dfrac{\partial\phi_{k}}{\partial z_{j}}}\right]  $ be the
Jacobian matrix with index $j$ represents the row and $k$ represents the
column. Then
\begin{equation}
H_{u\circ\phi}(z_{0})=\phi^{\prime}(z_{0})H_{u}(\phi(z_{0}))\phi^{\prime
}(z_{0})^{\ast}. \label{e4.1}%
\end{equation}
Therefore
\[
H_{u}(w)=\phi^{\prime}(z_{0})^{-1}H_{u\circ\phi}(z_{0})(\phi^{\prime}%
(z_{0})^{\ast})^{-1}=0.
\]
This proves that $u$ is pluriharmonic in $D$.
\end{proof}

\begin{lemma}
\label{l4.2}If $A$ is an $n\times n$ matrix over $\mathbb{C}$ such that
\begin{equation}
\langle\xi,\xi A\rangle=0,\quad\text{for all }\xi\in\partial B_{n},
\label{e4.2}%
\end{equation}
then $A=0$.
\end{lemma}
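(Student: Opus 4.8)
The plan is to read $\xi\mapsto\langle\xi,\xi A\rangle$ as a sesquilinear form in the single variable $\xi$ and to annihilate it by polarization. The first step is a homogeneity observation: for every $\lambda\in\mathbb{C}$ one has $\langle\lambda\xi,(\lambda\xi)A\rangle=|\lambda|^{2}\langle\xi,\xi A\rangle$, so the hypothesis $\left( \ref{e4.2}\right)$, which is a priori imposed only on $\partial B_{n}$, in fact forces $\langle\xi,\xi A\rangle=0$ for every $\xi\in\mathbb{C}^{n}$: an arbitrary nonzero $\xi$ can be scaled onto the unit sphere, and $\xi=0$ is trivial.

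Next I would expand in coordinates. Writing $\xi=(\xi_{1},\dots,\xi_{n})$ as a row vector, $(\xi A)_{k}=\sum_{j}\xi_{j}A_{jk}$, and hence $\langle\xi,\xi A\rangle=\sum_{j,k}\overline{A_{jk}}\,\xi_{k}\overline{\xi_{j}}$. This expression is the restriction to the diagonal, $Q(\xi):=B(\xi,\xi)$, of the sesquilinear form $B(\xi,\eta):=\sum_{j,k}\overline{A_{jk}}\,\xi_{k}\overline{\eta_{j}}$ (linear in the first argument, conjugate-linear in the second). Thus the lemma reduces to the elementary fact that a sesquilinear form on $\mathbb{C}^{n}$ which vanishes on the diagonal is identically zero. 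This follows from the polarization identity
\[
B(\xi,\eta)=\tfrac{1}{4}\sum_{m=0}^{3}i^{m}\,Q\bigl(\xi+i^{m}\eta\bigr),
\]
whose right-hand side vanishes by the previous step. Taking $\xi=e_{j}$ and $\eta=e_{k}$, where $e_{1},\dots,e_{n}$ denote the standard basis vectors, gives $\overline{A_{jk}}=0$ for all $j,k$, that is, $A=0$. If one prefers to avoid the general identity, one can argue entrywise: $\xi=e_{j}$ recovers $A_{jj}=0$, and then $\xi=e_{j}+e_{k}$ and $\xi=e_{j}+ie_{k}$ yield $A_{jk}+A_{kj}=0$ and $A_{jk}-A_{kj}=0$, forcing $A_{jk}=0$.

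I do not expect a genuine obstacle in this argument. The only point requiring a moment of care is that the relevant form is sesquilinear rather than bilinear, so the polarization must run over the four fourth roots of unity $1,i,-1,-i$ and not merely over $\pm1$; and it is precisely the homogeneity remark in the first step that upgrades the hypothesis on the sphere to a statement valid on all of $\mathbb{C}^{n}$, which is what makes polarization applicable.
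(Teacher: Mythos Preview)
Your proof is correct and essentially matches the paper's. The paper applies the hypothesis directly to the unit vectors $\mathbf{e}_{k}$, $\tfrac{1}{\sqrt{2}}(\mathbf{e}_{k}+\mathbf{e}_{j})$, and $\tfrac{1}{\sqrt{2}}(\mathbf{e}_{k}+i\mathbf{e}_{j})$ to kill the diagonal and off-diagonal entries in turn; your entrywise alternative is exactly this argument after your homogeneity step replaces the normalizing $\tfrac{1}{\sqrt{2}}$, and your polarization version is simply the abstract restatement of the same idea.
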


\begin{proof}
\noindent Applying (\ref{e4.2}) to $\xi=\mathbf{e}_{k}$, one has $\left[
A\right]  _{kk}=0$ for all $1\leq k\leq n$. Then applying the identity
(\ref{e4.2}) to $\xi={\frac{1}{\sqrt{2}}}(\mathbf{e}_{k}+\mathbf{e}_{j})$ and
to $\xi={\frac{1}{\sqrt{2}}}(\mathbf{e}_{k}+\sqrt{-1}\mathbf{e}_{j})$,
respectively, one has
\[
\left[  A\right]  _{jk}+\left[  A\right]  _{kj}=0\quad\text{and }\left[
A\right]  _{jk}-\left[  A\right]  _{kj}=0\;\;\text{for }k\neq j.
\]
This implies $A=0.$
\end{proof}

\begin{theorem}
\label{t4.3}Let $m\leq n$ and $u\in C^{n}\left(  \overline{\mathbf{I}\left(
m,n\right)  }\right)  $ be invariant harmonic in $\mathbf{I}(m,n)$. Then
\begin{equation}
{\frac{\partial^{2}u}{\partial z_{j\alpha}\partial\overline{z}_{k\beta}}%
}(0)=0,\quad1\leq j,k\leq m,\ 1\leq\alpha,\beta\leq n. \label{e4.3}%
\end{equation}

\end{theorem}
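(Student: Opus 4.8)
The plan is to exploit Part (i) of Theorem \ref{t2.2}, which reduces the single equation $\Delta_1 u=0$ to the system $\Delta_1^{jk}u=0$ for all $1\le j,k\le m$, and then to localize at the origin. Fix $j,k$ with $1\le j,k\le m$ and consider the operator
\[
\Delta_1^{jk}=\sum_{\alpha,\beta=1}^{n}\Bigl(\delta_{\alpha\beta}-\sum_{\ell=1}^{m}z_{\ell\alpha}\bar z_{\ell\beta}\Bigr)\frac{\partial^2}{\partial z_{j\alpha}\partial\bar z_{k\beta}}.
\]
The idea is that near $\mathbf z=0$ the coefficient matrix is a small perturbation of $I_n$, and more precisely, if we restrict attention to how $u$ depends on the single row-block of variables $(z_{j1},\dots,z_{jn})$ and $(z_{k1},\dots,z_{kn})$ — freezing the other entries at $0$ — the operator $\Delta_1^{jj}$ looks like $\tilde\Delta$ on a ball $B_n$ while $\Delta_1^{jk}$ for $j\ne k$ is a first-order-in-each-block operator whose principal part resembles the mixed Laplacian. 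So the first step is to set up the restriction/slicing: show that for each pair $(j,k)$ there is a totally geodesic (or at least holomorphically embedded) copy of an appropriate lower-dimensional domain inside $\mathbf I(m,n)$ passing through $0$, on which the restricted function $\tilde u$ still satisfies an equation of the type handled by Graham's original theorem (Theorem \ref{t1.1}) or by Theorem \ref{t1.3}. The smoothness class $C^{n}(\overline{\mathbf I(m,n)})$ is exactly what is needed so that the restriction lies in $C^{n}$ of the slice, matching the sharp hypothesis of Graham's ball theorem.

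Concretely, I would first handle the diagonal case $j=k$. Restrict $u$ to the set where all rows except the $j$th vanish; this set is biholomorphic to the unit ball $B_n\subset\mathbb C^n$, and on it $\Delta_1^{jj}$ becomes precisely the operator $\sum_{\alpha,\beta}(\delta_{\alpha\beta}-\bar z_{j\alpha}z_{j\beta})\partial^2/\partial z_{j\alpha}\partial\bar z_{j\beta}$, the invariant Laplacian of the ball (up to reindexing), so the restriction $v$ of $u$ is invariant harmonic on $B_n$ and lies in $C^n(\overline{B_n})$. By Graham's Theorem \ref{t1.1}, $v$ is pluriharmonic, hence its complex Hessian vanishes identically; in particular $\partial^2 u/\partial z_{j\alpha}\partial\bar z_{j\beta}(0)=0$ for all $\alpha,\beta$. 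For the off-diagonal case $j\ne k$, I would restrict to a two-row slice (rows $j$ and $k$ only, the rest zero), which is biholomorphic to $\mathbf I(2,n)$, and then further exploit that the \emph{mixed} second derivatives $\partial^2 u/\partial z_{j\alpha}\partial\bar z_{k\beta}$ at $0$ are controlled by testing the equation $\Delta_1^{jk}u=0$ at the origin after composing with automorphisms. Here is where Lemma \ref{l4.2} enters: after establishing that a certain bilinear form built from the Hessian block $[\partial^2 u/\partial z_{j\alpha}\partial\bar z_{k\beta}(0)]_{\alpha\beta}$ vanishes against all $\xi\in\partial B_n$ — which should follow by applying $\Delta_1^{jk}$ to $u$ at automorphic images of $0$ and using the diagonal vanishing already obtained — Lemma \ref{l4.2} forces that block to be zero.

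The step I expect to be the main obstacle is the off-diagonal case: making rigorous the passage from "$\Delta_1^{jk}u=0$ holds on all of $\mathbf I(m,n)$" to "the mixed Hessian block vanishes at $0$." The diagonal case is a clean reduction to Graham, but the off-diagonal operators $\Delta_1^{jk}$ ($j\ne k$) are not Laplace–Beltrami operators of any metric on a slice, so one cannot directly quote a Graham-type theorem; instead one must use the invariance of the full solution under $\operatorname{Aut}(\mathbf I(m,n))$ — in the spirit of Proposition \ref{p4.1} — to generate, at the single point $0$, enough linear relations among the entries of the mixed Hessian block. The key technical point will be to compute $\Delta_1^{jk}$ applied to $u\circ\phi$ at $0$ for a suitable family of automorphisms $\phi$ fixing $0$ (the isotropy group, essentially the unitary rotations $\mathbf z\mapsto U\mathbf z V$), extract the identity $\langle\xi,\xi A\rangle=0$ for the relevant matrix $A$ and all $\xi\in\partial B_n$, and then invoke Lemma \ref{l4.2}. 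Once \eqref{e4.3} is established, Theorem \ref{t1.2}(i) follows by combining \eqref{e4.3} with Proposition \ref{p4.1} applied to $\mathcal A(\mathbf I(m,n))$ the class of $C^n$ invariant harmonic functions.
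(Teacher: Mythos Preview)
Your plan is workable in outline but takes a longer path than the paper, and the off-diagonal step, as you suspected, is where it wobbles. The paper avoids the diagonal/off-diagonal split entirely by choosing a single parametrized family of slices: for each fixed unit vector $\xi\in\partial B_m$ it embeds $B_n$ into $\mathbf I(m,n)$ via the rank-one map $\lambda\mapsto\mathbf z(\lambda)=\xi^{t}\lambda$, sets $g(\lambda)=u(\mathbf z(\lambda))$, and checks directly that
\[
\sum_{i,j=1}^{n}(\delta_{ij}-\lambda_i\bar\lambda_j)\,\frac{\partial^{2} g}{\partial\lambda_i\partial\bar\lambda_j}
=\sum_{k,\ell=1}^{m}\xi_k\overline{\xi_\ell}\,(\Delta_1^{k\ell}u)(\mathbf z(\lambda))=0,
\]
so $g\in C^{n}(\overline{B_n})$ is invariant harmonic on $B_n$ and Graham's Theorem~\ref{t1.1} gives $\partial^{2} g/\partial\lambda_i\partial\bar\lambda_j(0)=0$. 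By the chain rule this reads $\sum_{k,\ell}\xi_k\overline{\xi_\ell}\,\partial^{2} u(0)/\partial z_{ki}\partial\bar z_{\ell j}=0$ for every $\xi\in\partial B_m$, and Lemma~\ref{l4.2}, applied to the $m\times m$ matrix with entries $\partial^{2} u(0)/\partial z_{ki}\partial\bar z_{\ell j}$ (row indices $k,\ell$, with $i,j$ fixed), finishes. Your row-$j$ slice is simply the special case $\xi=e_j$ of this construction, and your proposed isotropy argument is exactly what varying $\xi$ over $\partial B_m$ accomplishes; packaging the two into one family removes the ``main obstacle.''

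Two points in your sketch would need repair if you pursued it as written. First, Lemma~\ref{l4.2} must be applied on $\partial B_m$, not $\partial B_n$: the Hessian block whose vanishing you want is $m\times m$, indexed by the row labels $j,k$, with the column labels $\alpha,\beta$ held fixed. Second, evaluating $\Delta_1^{jk}u=0$ at the origin yields only $\sum_\alpha\partial^{2} u(0)/\partial z_{j\alpha}\partial\bar z_{k\alpha}=0$, a single trace condition, so that route alone cannot recover the full off-diagonal block; one really needs the pluriharmonicity of a sliced function (i.e.\ Graham's theorem on a ball through $0$), not merely the system at a point. The detour through $\mathbf I(2,n)$ is unnecessary, and Theorem~\ref{t1.3} plays no role here: on these rank-one slices the pulled-back operator is the ordinary invariant Laplacian of $B_n$, so it is Theorem~\ref{t1.1}, not the $\tilde\Delta$ result, that is invoked.
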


\begin{proof}
\noindent For any $\lambda=(\lambda_{1},\cdots,\lambda_{n})\in B_{n}$ and
$\xi=(\xi_{1},\cdots,\xi_{m})\in\partial B_{m}$ is fixed, we let
\begin{equation}
\mathbf{z}_{{}}=\mathbf{z}_{{}}(\lambda):=\xi^{t}\lambda. \label{e4.4}%
\end{equation}
Then
\begin{equation}
\sum_{p=1}^{m}z\left(  \lambda\right)  _{pi}\overline{z}\left(  \lambda
\right)  _{pj}=\lambda_{i}\overline{\lambda}_{j}. \label{e4.5}%
\end{equation}
Let $g(\lambda)=u(\mathbf{z}_{{}}(\lambda))$. Then $g\in C^{n}(\overline
{B_{n}})$ and
\begin{equation}
{\frac{\partial^{2}g(\lambda)}{\partial\lambda_{i}\partial\overline{\lambda
}_{j}}}=\sum_{k,\ell=1}^{m}\sum_{\alpha,\beta=1}^{n}{\frac{\partial
^{2}u(\mathbf{z})}{\partial z_{k\alpha}\partial\overline{z}_{\ell\beta}}}%
\frac{\partial\left(  \xi_{k}\lambda_{\alpha}\right)  }{\partial\lambda_{i}%
}\frac{\partial\left(  \overline{\xi_{\ell}\lambda_{\beta}}\right)  }%
{\partial\overline{\lambda_{j}}}\label{e4.6}\\
=\sum_{k,\ell=1}^{m}{\frac{\partial^{2}u(\mathbf{z})}{\partial z_{ki}%
\partial\overline{z}_{\ell j}}}\xi_{k}\overline{\xi_{\ell}}.
\end{equation}
Therefore,
\begin{align*}
\lefteqn{\sum_{i,j=1}^{n}\left(  \delta_{ij}-\lambda_{i}\overline{\lambda_{j}%
}\right)  \frac{\partial^{2}g\left(  \lambda\right)  }{\partial\lambda
_{i}\partial\bar{\lambda}_{j}}}\\
&  =\sum_{k,\ell=1}^{m}\xi_{k}\overline{\xi_{\ell}}\sum_{i,j=1}^{n}%
(\delta_{ij}-\lambda_{i}\overline{\lambda_{j}}){\frac{\partial^{2}u}{\partial
z_{ki}\partial\overline{z}_{\ell j}}}\\
&  =\sum_{k,\ell=1}^{m}\xi_{k}\overline{\xi_{\ell}}\sum_{i,j=1}^{n}%
(\delta_{ij}-\sum_{p=1}^{m}z\left(  \lambda\right)  _{pi}\overline{z}\left(
\lambda\right)  _{pj}){\frac{\partial^{2}u}{\partial z_{ki}\partial
\overline{z}_{\ell j}}}\\
&  =\sum_{k,\ell=1}^{m}\xi_{k}\overline{\xi_{\ell}}\,(\Delta_{1}^{k\ell
}u)\circ(\mathbf{z}(\lambda))\\
&  =0.
\end{align*}
Graham Theorem for invariant harmonic function on $B_{n}$ implies that $g$ is
pluriharmonic in $B_{n}$. In particular, by (\ref{e4.6}), one has
\[
0={\frac{\partial^{2}g}{\partial\lambda_{i}\partial\overline{\lambda}_{j}}%
}(0)=\sum_{k,\ell=1}^{m}{\frac{\partial^{2}u(0)}{\partial z_{ki}%
\partial\overline{z}_{\ell j}}}\xi_{k}\overline{\xi_{\ell}},\quad1\leq
k,\ell\leq m,\ 1\leq i,j\leq n
\]
for all $\xi\in B_{n}$. Combining this and Lemma \ref{l4.2}, one has
\[
{\frac{\partial^{2}u}{\partial z_{ki}\partial\overline{z}_{\ell j}}%
}(0)=0,\quad1\leq k,\ell\leq m,1\leq i,j\leq n.
\]
The proof of Theorem \ref{t4.3} is complete.
\end{proof}

\begin{theorem}
\label{t4.4}If $n>1$ is either odd and $u\in C^{\frac{n+1}{2}}(\overline
{\mathbf{II}(n)})$ or $n=2k$ is even and $u\in C^{k,\alpha}(\overline
{\mathbf{II}(n)})$ for some $\alpha>1/2$ and if $u$ is invariant harmonic in
$\mathbf{II}(n)$, then
\begin{equation}
{\frac{\partial^{2}u}{\partial z_{j\alpha}\partial\overline{z}_{k\beta}}%
}(0)=0,\quad1\leq j,\alpha,k,\beta\leq n. \label{e4.7}%
\end{equation}

\end{theorem}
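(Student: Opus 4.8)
The plan is to adapt the proof of Theorem~\ref{t4.3}, now slicing $\mathbf{II}(n)$ by a map tailored to the operator $\tilde\Delta$ of Theorem~\ref{t1.3}. For a row vector $\lambda=(\lambda_1,\dots,\lambda_n)\in\mathbb{C}^n$ put $\mathbf{z}(\lambda):=\lambda^t\lambda$, so $[\mathbf{z}(\lambda)]_{j\alpha}=\lambda_j\lambda_\alpha$; this is symmetric and $\mathbf{z}(\lambda)\mathbf{z}(\lambda)^{\ast}=|\lambda|^2\,\lambda^t\bar\lambda$, a matrix whose only nonzero eigenvalue is $|\lambda|^4$, so $I_n-\mathbf{z}(\lambda)\mathbf{z}(\lambda)^{\ast}>0$ precisely when $|\lambda|<1$. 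Hence $\lambda\mapsto\mathbf{z}(\lambda)$ maps $B_n$ into $\mathbf{II}(n)$ with $\mathbf{z}(0)=0$, and $\mathbf{z}(\overline{B_n})\subset\overline{\mathbf{II}(n)}$. Set $g(\lambda):=u(\mathbf{z}(\lambda))$. Since $\mathbf{z}(\cdot)$ is polynomial, $g$ inherits the boundary regularity of $u$: $g\in C^{\frac{n+1}{2}}(\overline{B_n})$ when $n$ is odd, and $g\in C^{k,\alpha}(\overline{B_n})$ when $n=2k$; moreover $g\in C^\infty(B_n)$, because $u$ is invariant harmonic hence smooth in the interior of $\mathbf{II}(n)$.

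The crucial point is that $\tilde\Delta g\equiv 0$ on $B_n$. Write $\epsilon_{j\alpha}:=1-\delta_{j\alpha}/2$ and $H_{j\alpha,k\beta}:=\dfrac{\partial^2u}{\partial z_{j\alpha}\partial\bar z_{k\beta}}(\mathbf{z}(\lambda))$. Differentiating $g$ while keeping track of the relation $z_{j\alpha}=z_{\alpha j}$ (which is exactly what the weights $\epsilon_{j\alpha}$ in~\eqref{e2.3} encode) yields
\[
Q_{jk}:=\frac{\partial^2g}{\partial\lambda_j\partial\bar\lambda_k}(\lambda)=\sum_{\alpha,\beta=1}^{n}\frac{H_{j\alpha,k\beta}}{\epsilon_{j\alpha}\epsilon_{k\beta}}\,\lambda_\alpha\bar\lambda_\beta .
\]
On the slice $V_{\alpha\beta}(\mathbf{z}(\lambda))=\delta_{\alpha\beta}-|\lambda|^2\lambda_\alpha\bar\lambda_\beta$, so comparing with~\eqref{e2.3} gives
\[
\Delta_2^{jk}u(\mathbf{z}(\lambda))=\sum_{\alpha=1}^n\frac{H_{j\alpha,k\alpha}}{\epsilon_{j\alpha}\epsilon_{k\alpha}}-|\lambda|^2Q_{jk},
\]
and by Theorem~\ref{t2.2}(ii) (applicable since $u\in C^2(\mathbf{II}(n))\cap C(\overline{\mathbf{II}(n)})$) the left side vanishes, whence $|\lambda|^2Q_{jk}=\sum_\alpha\epsilon_{j\alpha}^{-1}\epsilon_{k\alpha}^{-1}H_{j\alpha,k\alpha}$ for all $j,k$. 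Contracting this with $\lambda_j\bar\lambda_k$, summing in $j,k$, and using the Hessian symmetry $H_{j\alpha,k\alpha}=H_{\alpha j,\alpha k}$ together with $\epsilon_{j\alpha}=\epsilon_{\alpha j}$, the right-hand side collapses to $\sum_\alpha Q_{\alpha\alpha}$; therefore
\[
\tilde\Delta g=\sum_jQ_{jj}-|\lambda|^2\sum_{j,k}\lambda_j\bar\lambda_kQ_{jk}=\sum_jQ_{jj}-\sum_\alpha Q_{\alpha\alpha}=0
\]
on $B_n\setminus\{0\}$, hence on all of $B_n$ by continuity of $\tilde\Delta g$.

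Now Theorem~\ref{t1.3} applies: Part~(i) if $n$ is odd, Part~(ii) if $n=2k$. Thus $g$ is pluriharmonic in $B_n$, i.e.\ $Q_{jk}\equiv 0$. Substituting into the formula for $Q_{jk}$ and letting $\lambda\to 0$, and using that the second derivatives of $u$ are continuous at the interior point $0$, the homogeneous bidegree-$(1,1)$ form $\sum_{\alpha,\beta}\epsilon_{j\alpha}^{-1}\epsilon_{k\beta}^{-1}\frac{\partial^2u}{\partial z_{j\alpha}\partial\bar z_{k\beta}}(0)\,\lambda_\alpha\bar\lambda_\beta$ is $o(|\lambda|^2)$ near $0$, hence identically zero for each pair $j,k$. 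Applying Lemma~\ref{l4.2} to the matrix $\big[\epsilon_{j\alpha}^{-1}\epsilon_{k\beta}^{-1}\frac{\partial^2u}{\partial z_{j\alpha}\partial\bar z_{k\beta}}(0)\big]_{\alpha\beta}$ and dividing out the nonzero weights gives $\frac{\partial^2u}{\partial z_{j\alpha}\partial\bar z_{k\beta}}(0)=0$ for all $j,\alpha,k,\beta$, which is~\eqref{e4.7}.

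The step I expect to be the main obstacle is the verification that $\tilde\Delta g\equiv 0$. The slice $\mathbf{z}(\lambda)=\lambda^t\lambda$ is singular at $\lambda=0$, so it carries no direct information about the Hessian of $u$ at $0$ until one exploits the degree-$2$ homogeneity above; and the cancellation producing $\tilde\Delta g=0$ genuinely requires the full system $\Delta_2^{jk}u=0$ (Theorem~\ref{t2.2}(ii)), not merely $\Delta_2u=0$, so the bookkeeping with the weights $\epsilon_{j\alpha}$ and with the symmetry of the complex Hessian of $u$ must be tracked precisely in order for the terms that are not of $\tilde\Delta$-type to drop out. Everything else is a direct transcription of the $\mathbf{I}(m,n)$ argument in the proof of Theorem~\ref{t4.3}.
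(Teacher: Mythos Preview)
Your argument is correct and follows the same core strategy as the paper: slice $\mathbf{II}(n)$ by a rank-one map into symmetric matrices, verify that the pullback is annihilated by $\tilde\Delta$ via the system $\Delta_2^{jk}u=0$, invoke Theorem~\ref{t1.3}, and then read off the Hessian at the origin. The paper uses the slightly more general slice $\mathbf{z}(\lambda)=(\lambda U)^t(\lambda U)$ with a unitary $U$ and exploits the freedom in $U$ together with two applications of Lemma~\ref{l4.2} to isolate each Hessian entry; you take $U=I$ and instead extract the Hessian directly from the vanishing of each $Q_{jk}$ by the degree-two homogeneity argument and a single application of Lemma~\ref{l4.2} for each pair $(j,k)$. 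Your route is a little more economical---the unitary parameter in the paper's proof is not actually needed once one notices that the identities $Q_{jk}\equiv 0$ already separate the indices---but the two proofs are otherwise the same computation.
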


\begin{proof}
Let $\lambda=(\lambda_{1},\cdots,\lambda_{n})\in B_{n}$ and $U=[U_{jk}]$ is a
unitary matrix. Let
\begin{equation}
\mathbf{z}(\lambda)=(\lambda U)^{t}(\lambda U)\in\mathbf{II}(n) \label{e4.8}%
\end{equation}
and
\begin{equation}
v(\lambda)=u(\mathbf{z}(\lambda)). \label{e4.9}%
\end{equation}
Since $\mathbf{z}(\lambda)$ is holomorphic in $\lambda$ and $\mathbf{z}%
(\lambda)$ is symmetric, we have
\begin{align*}
\lefteqn{\frac{\partial^{2}v\left(  \lambda\right)  }{\partial\lambda_{\alpha
}\partial\bar{\lambda}_{\beta}}}\\
&  =\sum_{k,\ell=1}^{n}{\frac{1}{2-\delta_{k\ell}}}{\frac{\partial}%
{\partial\lambda_{\alpha}}}\left(  {\frac{\partial u}{\partial\overline
{z}_{k\ell}}}(\mathbf{z}(\lambda))\overline{\left[  \frac{\partial}%
{\partial\lambda_{\beta}}\left(  U^{t}\lambda^{t}\lambda U\right)  \right]
_{k\ell}}\right) \\
&  =\sum_{k,\ell=1}^{n}{\frac{1}{2-\delta_{k\ell}}}{\frac{\partial}%
{\partial\lambda_{\alpha}}}\left(  {\frac{\partial u}{\partial\overline
{z}_{k\ell}}}(\mathbf{z}(\lambda))\overline{\sum_{q}(\lambda_{q}U_{q\ell
}U_{\beta k}+U_{qk}\lambda_{q}U_{\beta\ell}})\right) \\
&  =\sum_{i,j=1}^{n}\sum_{k,\ell=1}^{n}{\frac{{\frac{\partial^{2}u}{\partial
z_{ij}\partial\overline{z}_{k\ell}}}(\mathbf{z}(\lambda))}{(2-\delta_{k\ell
})(2-\delta_{ij})}}\left(  \sum_{p}\lambda_{p}(U_{pj}U_{\alpha i}%
+U_{pi}U_{\alpha j})\right)  \left(  \overline{\sum_{q}\lambda_{q}(U_{q\ell
}U_{\beta k}+U_{qk}U_{\beta\ell}})\right) \\
&  =\sum_{i,j=1}^{n}\sum_{k,\ell=1}^{n}{\frac{1}{(1-{\frac{1}{2}}\delta
_{k\ell})(1-{\frac{1}{2}}\delta_{ij})}}{\frac{\partial^{2}u}{\partial
z_{ij}\partial\overline{z}_{k\ell}}}(\mathbf{z}(\lambda))(\sum_{p}\lambda
_{p}U_{pi}U_{\alpha j})\overline{\sum_{q}\lambda_{q}U_{qk}U_{\beta\ell}}\\
&  =\sum_{p,q}\lambda_{p}\overline{\lambda}_{q}\sum_{i,k=1}^{n}U_{pi}%
\overline{U_{qk}}\sum_{j,\ell=1}^{n}{\frac{1}{(1-{\frac{1}{2}}\delta_{k\ell
})(1-{\frac{1}{2}}\delta_{ij})}}{\frac{\partial^{2}u}{\partial z_{ij}%
\partial\overline{z}_{k\ell}}}(\mathbf{z}(\lambda))U_{\alpha j}\overline
{U_{\beta\ell}}%
\end{align*}
and
\begin{align*}
\sum_{\alpha,\beta=1}^{n}(\delta_{\alpha\beta}-|\lambda|^{2}\lambda_{\alpha
}\overline{\lambda}_{\beta})U_{\alpha j}\overline{U_{\beta\ell}}  &
=\delta_{j\ell}-|\lambda|^{2}\sum_{\alpha=1}^{n}\lambda_{\alpha}U_{\alpha
j}\sum_{\beta=1}^{n}\overline{\lambda_{\beta}U_{\beta\ell}}\\
&  =\delta_{j\ell}-\left[  \lambda U\right]  _{1j}\left\vert \lambda
U\right\vert ^{2}\left[  \overline{\lambda U}\right]  _{1\ell}\\
&  =\delta_{j\ell}-\left[  \left(  \lambda U\right)  ^{t}\right]  _{j1}\lambda
U\cdot\left(  \lambda U\right)  ^{\ast}\left[  \overline{\lambda U}\right]
_{1\ell}\\
&  =\delta_{j\ell}-\sum_{\alpha=1}^{n}z(\lambda)_{pj}\overline{z(\lambda
)_{p\ell}}\\
&  =V_{j\ell}(\mathbf{z}(\lambda)).
\end{align*}
Therefore,
\begin{align*}
\lefteqn{\sum_{\alpha,\beta=1}^{n}\left(  \delta_{\alpha\beta}-\left\vert
\lambda\right\vert ^{2}\lambda_{\alpha}\bar{\lambda}_{\beta}\right)
\frac{\partial^{2}v\left(  \lambda\right)  }{\partial\lambda_{\alpha}%
\partial\bar{\lambda}_{\beta}}}\\
&  =\sum_{p,q}\lambda_{p}\overline{\lambda}_{q}\sum_{i,k=1}^{n}U_{pi}%
\overline{U_{qk}}\sum_{j,\ell=1}^{n}{\frac{V_{j\ell}(\mathbf{z}(\lambda
))}{(1-{\frac{1}{2}}\delta_{k\ell})(1-{\frac{1}{2}}\delta_{ij})}}%
{\frac{\partial^{2}u}{\partial z_{ij}\partial\overline{z}_{k\ell}}}%
(\mathbf{z}(\lambda))\\
&  =\sum_{p,q}\lambda_{p}\overline{\lambda}_{q}\sum_{i,k=1}^{n}U_{pi}%
\overline{U_{qk}}\cdot\Delta_{2}^{ik}u(\mathbf{z}(\lambda))\\
&  =0.
\end{align*}
Applying Theorem \ref{t1.3} to $v$ on $B_{n}$, one has $v$ is pluriharmonic in
$B_{n}$. Thus,
\begin{align*}
0  &  ={\frac{\partial v(\lambda)}{\partial\lambda_{\alpha}\partial
\overline{\lambda}_{\beta}}}\\
&  =\sum_{p,q}\lambda_{p}\overline{\lambda}_{q}\sum_{i,k=1}^{n}U_{pi}%
\overline{U_{qk}}\sum_{j,\ell=1}^{n}{\frac{1}{(1-{\frac{1}{2}}\delta_{k\ell
})(1-{\frac{1}{2}}\delta_{ij})}}{\frac{\partial^{2}u}{\partial z_{ij}%
\partial\overline{z}_{k\ell}}}(\mathbf{z}(\lambda))U_{\alpha j}\overline
{U_{\beta\ell}}.
\end{align*}
For any $\xi\in\partial B_{n}$ and $\omega\in\left(  0,1\right)  $, by letting
$\lambda=\omega\xi U^{\ast}$ one has%
\[
\omega^{2}\sum_{i,k=1}^{n}\xi_{i}\bar{\xi}_{k}\sum_{j,\ell=1}^{n}{\frac
{1}{(1-{\frac{1}{2}}\delta_{k\ell})(1-{\frac{1}{2}}\delta_{ij})}}%
{\frac{\partial^{2}u}{\partial z_{ij}\partial\overline{z}_{k\ell}}}(\omega
^{2}\xi^{t}\xi)U_{\alpha j}\overline{U_{\beta\ell}}=0.
\]
Let $\omega\rightarrow0,$ one obtain%
\[
\sum_{i,k=1}^{n}\xi_{i}\bar{\xi}_{k}\sum_{j,\ell=1}^{n}{\frac{1}{(1-{\frac
{1}{2}}\delta_{k\ell})(1-{\frac{1}{2}}\delta_{ij})}}{\frac{\partial^{2}%
u}{\partial z_{ij}\partial\overline{z}_{k\ell}}}(0)U_{\alpha j}\overline
{U_{\beta\ell}}=0.
\]
Let
\[
A=\left[  A_{ki}\right]  :=\left[  \sum_{j,\ell=1}^{n}{\frac{1}{(1-{\frac
{1}{2}}\delta_{k\ell})(1-{\frac{1}{2}}\delta_{ij})}}{\frac{\partial^{2}%
u}{\partial z_{ij}\partial\overline{z}_{k\ell}}}(0)U_{\alpha j}\overline
{U_{\beta\ell}}\right]  .
\]
Then
\[
\langle\xi A,\xi\rangle=0.
\]
By Lemma \ref{l4.2}, this implies $A=0$. Therefore,
\[
\sum_{j,\ell=1}^{n}{\frac{1}{(1-{\frac{1}{2}}\delta_{k\ell})(1-{\frac{1}{2}%
}\delta_{ij})}}{\frac{\partial^{2}u}{\partial z_{ij}\partial\overline
{z}_{k\ell}}}(\mathbf{0}))U_{\alpha j}\overline{U_{\beta\ell}}=0.
\]
Take $\alpha=\beta$, Lemma \ref{l4.2} implies that
\[
{\frac{1}{(1-{\frac{1}{2}}\delta_{k\ell})(1-{\frac{1}{2}}\delta_{ij})}}%
{\frac{\partial^{2}u}{\partial z_{ij}\partial\overline{z}_{k\ell}}}%
(0)=0,\quad1\leq i,j,k,\ell\leq n.
\]
Therefore,
\[
{\frac{\partial^{2}u}{\partial z_{ij}\partial\overline{z}_{k\ell}}}%
(0)=0,\quad1\leq i,j,k,\ell\leq n,
\]
and the proof of the theorem is complete.
\end{proof}

\medskip

\begin{theorem}
\label{t4.5}Let $n>1$ be even and $u\in C^{n-1}\left(  \overline
{\mathbf{III}\left(  n\right)  }\right)  $ is invariant harmonic in
$\mathbf{III}(n)$. Then
\begin{equation}
{\frac{\partial^{2}u(0)}{\partial z_{j\alpha}\partial\overline{z}_{k\beta}}%
}=0,\quad1\leq j,k,\alpha,\beta\leq n. \label{e4.10}%
\end{equation}

\end{theorem}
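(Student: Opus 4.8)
The plan is to follow the pattern of Theorems \ref{t4.3} and \ref{t4.4}: exhibit a holomorphic ball-slice of $\mathbf{III}(n)$ through the origin, pull $u$ back to it, show the pullback is invariant harmonic there, apply Graham's Theorem \ref{t1.1}, and finish with a polarization argument in the spirit of Lemma \ref{l4.2}. The point is that for $\mathbf{III}(n)$ the natural slice is a copy of $B_{n-1}$ carrying its ordinary Bergman Laplacian (\ref{e1.2}), which is precisely why $C^{n-1}$ is the right threshold. Concretely, fix $\eta\in\partial B_{n}$ and an orthonormal basis $f_{1},\dots,f_{n-1}$ of $\eta^{\perp}$, and for $\mu\in B_{n-1}$ set $\lambda=\lambda(\mu)=\sum_{a}\mu_{a}f_{a}$ and
\[
\mathbf{z}(\mu)=\eta^{t}\lambda-\lambda^{t}\eta .
\]
This is antisymmetric, $\mathbf{z}(0)=0$, and since $\lambda\perp\eta$ one computes $\mathbf{z}(\mu)\mathbf{z}(\mu)^{\ast}=|\mu|^{2}\eta^{t}\overline{\eta}+\lambda^{t}\overline{\lambda}$, whose only nonzero eigenvalue is $|\mu|^{2}$ with multiplicity two; hence $I_{n}-\mathbf{z}(\mu)\mathbf{z}(\mu)^{\ast}>0$ exactly when $|\mu|<1$, so $\mathbf{z}\colon B_{n-1}\to\mathbf{III}(n)$ is a holomorphic embedding.

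Next put $v=u\circ\mathbf{z}$; since $\mathbf{z}$ is linear in $\mu$, $v\in C^{n-1}(\overline{B_{n-1}})$. The heart of the argument is the chain-rule identity
\[
\sum_{a,b=1}^{n-1}\bigl(\delta_{ab}-\mu_{a}\overline{\mu}_{b}\bigr)\frac{\partial^{2}v}{\partial\mu_{a}\partial\overline{\mu}_{b}}
=4\sum_{j,k=1}^{n}\eta_{j}\overline{\eta}_{k}\,\bigl(\Delta_{3}^{jk}u\bigr)\bigl(\mathbf{z}(\mu)\bigr),
\]
which I would obtain by expanding the left side through $\partial z_{pq}/\partial\mu_{a}=(\eta^{t}f_{a}-f_{a}^{t}\eta)_{pq}$, using $\sum_{a}(f_{a})_{q}\overline{(f_{a})_{q'}}=\delta_{qq'}-\eta_{q}\overline{\eta}_{q'}$ and $V_{\alpha\beta}(\mathbf{z}(\mu))=\delta_{\alpha\beta}-|\mu|^{2}\eta_{\alpha}\overline{\eta}_{\beta}-\lambda_{\alpha}\overline{\lambda}_{\beta}$, and exploiting the antisymmetry of $\partial^{2}u/\partial z_{pq}\partial\overline{z}_{p'q'}$ under $p\leftrightarrow q$ and $p'\leftrightarrow q'$ so that the $|\mu|^{4}$ and $|\mu|^{2}$ cross terms cancel. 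By Theorem \ref{t2.2}(iii) (this is where $n$ even enters) the right side vanishes, so $\Delta_{g}v=0$ on $B_{n-1}$ in the sense of (\ref{e1.2}). Graham's Theorem \ref{t1.1}, applied on $B_{n-1}$, then forces $v$ to be pluriharmonic; since $v\in C^{n-1}(\overline{B_{n-1}})$ this is exactly the allowed regularity, and it is here that the $C^{n-1}$ hypothesis is consumed. The vanishing of the complex Hessian of $v$ at $0$ reads
\[
Q\bigl(\eta^{t}f-f^{t}\eta,\ \eta^{t}g-g^{t}\eta\bigr)=0\qquad\text{for all }f,g\in\eta^{\perp},
\]
where $Q(M,N):=\sum_{p,q,p',q'}\dfrac{\partial^{2}u(0)}{\partial z_{pq}\partial\overline{z}_{p'q'}}M_{pq}\overline{N_{p'q'}}$, a Hermitian form on antisymmetric matrices.

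Finally I would let $\eta$ range over $\partial B_{n}$. Writing $\widetilde{E}_{jk}=\mathbf{e}_{j}^{t}\mathbf{e}_{k}-\mathbf{e}_{k}^{t}\mathbf{e}_{j}$, the choices $\eta=\mathbf{e}_{j}$ (with $f,g$ among the remaining coordinate vectors) give $Q(\widetilde{E}_{jk},\widetilde{E}_{j\ell})=0$ whenever $k,\ell\neq j$. For a pair $\widetilde{E}_{ab},\widetilde{E}_{cd}$ with $a,b,c,d$ all distinct — which can occur only for $n\ge4$ — take $f=\mathbf{e}_{b}$, $g=\mathbf{e}_{d}$ and apply the displayed identity first with $\eta=(\mathbf{e}_{a}+\mathbf{e}_{c})/\sqrt{2}$ and then with $\eta=(\mathbf{e}_{a}+\sqrt{-1}\,\mathbf{e}_{c})/\sqrt{2}$; combining the two resulting relations together with the vanishings $Q(\widetilde{E}_{ab},\widetilde{E}_{ad})=Q(\widetilde{E}_{cb},\widetilde{E}_{cd})=0$ already obtained forces $Q(\widetilde{E}_{ab},\widetilde{E}_{cd})=0$. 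Hence $Q\equiv0$ on antisymmetric matrices, which is precisely (\ref{e4.10}); the case $n=2$ is trivial, since $\mathbf{III}(2)$ is biholomorphic to the unit disc, where harmonic functions are automatically pluriharmonic.

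The main obstacle is the chain-rule identity of the second paragraph: one must carry the constraints $z_{qp}=-z_{pq}$ and $z_{pp}=0$ through the computation, check that the diagonal factors $(1-\delta_{j\alpha})(1-\delta_{k\beta})$ in $\Delta_{3}^{jk}$ are harmless on such functions, and verify that after full antisymmetrization the surviving terms reassemble into exactly the standard ball Laplacian of $v$ rather than the modified operator $\widetilde{\Delta}$ of Theorem \ref{t1.3}; this last point is what pins $C^{n-1}$, matching Graham's original theorem on $B_{n-1}$, as the sharp regularity. The polarization step of the last paragraph is routine linear algebra but does require the two non-coordinate directions above, because the antisymmetry of $\mathbf{z}$ prevents a single $\eta$ from detecting a pair of ``disjoint'' rank-two matrices.
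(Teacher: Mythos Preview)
Your proposal is correct and follows essentially the same route as the paper: slice $\mathbf{III}(n)$ by a copy of $B_{n-1}$ of rank-two antisymmetric matrices, show via Theorem~\ref{t2.2}(iii) that the pullback is invariant harmonic for the standard Bergman Laplacian on $B_{n-1}$, invoke Graham's Theorem~\ref{t1.1}, and polarize. The only cosmetic difference is that the paper fixes the coordinate slice $\eta=\mathbf{e}_{1}$ and then moves it by the unitary conjugation $u\mapsto u(U^{t}\,\cdot\,U)$, reducing the final polarization to a single application of Lemma~\ref{l4.2}, whereas you parametrize the slices directly by $\eta\in\partial B_{n}$ and polarize by choosing $\eta$ among $\mathbf{e}_{j}$ and $(\mathbf{e}_{a}\pm\sqrt{-1}\,\mathbf{e}_{c})/\sqrt{2}$; these are equivalent, your $\eta$ being the first row of the paper's $U$.
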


\begin{proof}
Let $\mathbf{z}(\lambda):B_{n-1}\rightarrow\mathbf{III}(n)$ be defined by
\begin{equation}
\mathbf{z}(\lambda)=\left[
\begin{array}
[c]{cc}%
0 & \lambda\\
-\lambda^{t} & O_{n-1}%
\end{array}
\right]  , \label{e4.11}%
\end{equation}
where $O_{n-1}$ is an $(n-1)\times(n-1)$ zero matrix. Let
\[
g(\lambda)=u(\mathbf{z}(\lambda)),\quad\lambda=(\lambda_{2},\cdots,\lambda
_{n})\in B_{n-1}.
\]
Then
\begin{align*}
{\frac{\partial g(\lambda)}{\partial\lambda_{p}}}  &  =\sum_{j<\alpha}%
^{n}{\frac{\partial u(\mathbf{z}(\lambda))}{\partial z_{j\alpha}}}%
{\frac{\partial(z_{j\alpha}(\lambda)-z_{\alpha j}(\lambda))}{\partial
\lambda_{p}}}\\
&  =\sum_{j,\alpha=1}^{n}{\frac{\partial u(\mathbf{z}(\lambda))}{\partial
z_{j\alpha}}}{\frac{\partial z_{j\alpha}(\lambda)}{\partial\lambda_{p}}}\\
&  =2{\frac{\partial u(\mathbf{z}(\lambda)}{\partial z_{1p}}}%
\end{align*}
and
\[
{\frac{\partial^{2}g(\lambda)}{\partial\lambda_{p}\partial\overline{\lambda
}_{q}}}=4{\frac{\partial^{2}u(\mathbf{z}(\lambda)}{\partial z_{1p}%
\partial\overline{z}_{1q}}}.
\]

Since $V(\mathbf{z}(\lambda))=I_{n}-\mathbf{z}(\lambda)\mathbf{z}%
(\lambda)^{\ast}$, one has
\[
V_{p1}=(1-|\lambda|^{2})\delta_{p1},V_{1p}=(1-|\lambda|^{2})\delta_{1p}%
\]
and%
\[
\quad V_{\alpha\beta}=\delta_{\alpha\beta}-\lambda_{\alpha}\overline{\lambda
}_{\beta}\text{ for }\alpha,\beta\geq2.
\]
Therefore,
\begin{align*}
\lefteqn{\sum_{p,q=2}^{n}\left(  \delta_{pq}-\lambda_{p}\bar{\lambda}%
_{q}\right)  \frac{\partial^{2}g\left(  \lambda\right)  }{\partial\lambda
_{p}\partial\bar{\lambda}_{q}}}\\
&  =4\sum_{p,q=2}^{n}(\delta_{pq}-\lambda_{p}\overline{\lambda}_{q}%
){\frac{\partial^{2}u(\lambda)}{\partial z_{1p}\partial\overline{z}_{1q}}}\\
&  =4\Delta_{3}^{11}u(\mathbf{z}(\lambda))=0.
\end{align*}
By the Graham's theorem on $B_{n-1}$, one have $g$ is pluriharmonic in
$\lambda\in B_{n-1}$. For any unitary matrix $U$ since
\[
u_{U}(\mathbf{z})=u(U^{t}\mathbf{z}U)
\]
is also invariant harmonic in $\mathbf{III}(n)$. By the argument, if we let
\[
g(\lambda)=u(U^{t}\mathbf{z}(\lambda)U)
\]
then
\[
{\frac{\partial^{2}g(0)}{\partial\lambda_{p}\partial\bar{\lambda}_{q}}}=0.
\]
Notice that
\begin{align*}
{\frac{\partial g(\lambda)}{\partial\lambda_{p}}} &  =\sum_{j<\alpha}%
^{n}{\frac{\partial u(\mathbf{z}(\lambda)}{\partial z_{j\alpha}}}%
{\frac{\partial(z_{j\alpha}(\lambda)-z_{\alpha j}(\lambda))}{\partial
\lambda_{p}}}\\
&  =\sum_{j,\alpha=1}^{n}{\frac{\partial u(\mathbf{z}(\lambda)}{\partial
z_{j\alpha}}}{\frac{\partial z_{j\alpha}(\lambda)}{\partial\lambda_{p}}}\\
&  =\sum_{j,\alpha=1}^{n}{\frac{\partial u(\mathbf{z}(\lambda)}{\partial
z_{j\alpha}}}(U_{1j}U_{p\alpha}-U_{pj}U_{1\alpha}),
\end{align*}%
\begin{align*}
{\frac{\partial^{2}g(\lambda)}{\partial\lambda_{p}\partial\overline{\lambda
}_{q}}} &  =\sum_{j,\alpha=1}^{n}\sum_{k,\beta=1}^{n}{\frac{\partial
^{2}u(\mathbf{z}(\lambda)}{\partial z_{j\alpha}\partial\overline{z}_{k\beta}}%
}{\frac{\partial z_{j\alpha}(\lambda)}{\partial\lambda_{p}}}\overline
{\frac{\partial z_{k\beta}}{\partial\lambda_{q}}}\\
&  =\sum_{j,\alpha=1}^{n}\sum_{k,\beta=1}^{n}{\frac{\partial^{2}%
u(\mathbf{z}(\lambda)}{\partial z_{j\alpha}\partial\overline{z}_{k\beta}}%
}(U_{1j}U_{p\alpha}-U_{pj}U_{1\alpha})(\overline{U}_{1k}\overline{U}_{q\beta
}-\overline{U}_{qk}\overline{U}_{1\beta})
\end{align*}
and since $\mathbf{z}$ is anti-symmetric, one has
\begin{align*}
0 &  =\sum_{p,q=1}^{n}U_{q\ell}\overline{U}_{pm}{\frac{\partial^{2}%
g(0)}{\partial\lambda_{p}\partial\overline{\lambda}_{q}}}\\
&  =\sum_{j,\alpha=1}^{n}\sum_{k,\beta=1}^{n}{\frac{\partial^{2}u(0)}{\partial
z_{j\alpha}\partial\overline{z}_{k\beta}}}(U_{1j}\delta_{m\alpha}-\delta
_{mj}U_{1\alpha})(\overline{U}_{1k}\delta_{\ell\beta}-\delta_{\ell k}%
\overline{U}_{1\beta})\\
&  =\sum_{j,k=1}^{n}{\frac{\partial^{2}u(0)}{\partial z_{jm}\partial
\overline{z}_{k\ell}}}U_{1j}\overline{U}_{1k}-\sum_{j,\beta=1}^{n}%
{\frac{\partial^{2}u(0)}{\partial z_{jm}\partial\overline{z}_{\ell\beta}}%
}U_{1j}\overline{U}_{1\beta}\\
&  -\sum_{\alpha,k=1}^{n}{\frac{\partial^{2}u(0)}{\partial z_{m\alpha}%
\partial\overline{z}_{k\ell}}}U_{1\alpha}\overline{U}_{1k}+\sum_{\alpha,\beta
}{\frac{\partial^{2}u(0)}{\partial z_{m\alpha}\partial\overline{z}_{\ell\beta
}}}U_{1\alpha}\overline{U}_{1\beta}\\
&  =4\sum_{j,k=1}^{n}{\frac{\partial^{2}u(0)}{\partial z_{jm}\partial
\overline{z}_{k\ell}}}U_{1j}\overline{U}_{1k}.
\end{align*}
Lemma \ref{l4.2} implies
\[
{\frac{\partial^{2}u(0)}{\partial z_{jm}\partial\overline{z}_{k\ell}}}=0.
\]
The proof of the theorem is complete.
\end{proof}

\medskip

\noindent\textbf{Proof of Theorem 1.2} \medskip

\begin{proof}

\begin{enumerate}
\item[1)] If $u\in C^{n}(\overline{\mathbf{I}(m,n)})$ is invariant harmonic in
$\mathbf{I}(m,n)$. By Theorem \ref{t4.3} and Proposition \ref{p4.1}, one has
\[
\frac{\partial^{2}u}{\partial z_{j\alpha}\partial\bar{z}_{k\beta}}\left(
\mathbf{z}\right)  =0,\quad\mathbf{z}\in\mathbf{I}\left(  m,n\right)
,\quad1\leq j,k\leq m,1\leq\alpha,\beta\leq n.
\]
This means that $u$ is pluriharmonic in $\mathbf{I}(m,n)$.

\item[2)] If $n>1$ is odd and if $u\in C^{\frac{n+1}{2}}(\overline
{\mathbf{II}(n)})$ or $n=2k>1$ is even and if $u\in C^{k,\alpha}%
(\overline{\mathbf{II}(n)})$ for some $\alpha>1/2$ and if $u$ is invariant
harmonic in $\mathbf{II}(n)$. By Theorem \ref{t4.4} and Proposition
\ref{p4.1}, one has
\[
\frac{\partial^{2}u}{\partial z_{j\alpha}\partial\bar{z}_{k\beta}}\left(
\mathbf{z}\right)  =0,\quad\mathbf{z}\in\mathbf{II}\left(  n\right)
,\quad1\leq j,k,\alpha,\beta\leq n.
\]
This means that $u$ is pluriharmonic in $\mathbf{II}(n)$.

\item[3)] If $n$ is even and if $u\in C^{n-1}(\overline{\mathbf{III}(n)})$ is
invariant harmonic in $\mathbf{III}(n)$. By Theorem \ref{t4.5} and Proposition
\ref{p4.1}, one has
\[
\frac{\partial^{2}u}{\partial z_{j\alpha}\partial\bar{z}_{k\beta}}\left(
\mathbf{z}\right)  =0,\quad\mathbf{z}\in\mathbf{III}\left(  n\right)
,\quad1\leq j,k,\alpha,\beta\leq n.
\]
This means that $u$ is pluriharmonic in $\mathbf{III}(n)$.
\end{enumerate}

Therefore, the proof of Theorem \ref{t1.2} is complete.
\end{proof}

\section{Remarks on $\mathbf{III}(3),\mathbf{IV}(4)$ and $\mathbf{IV}(2)$}

\noindent\textbf{First}, let us make a remark on $\mathbf{III}(2k+1)$ when
$k=1$. It is known from Lu\thinspace\cite{lu1963} that $\mathbf{III}(3)$ is
biholomorphical to $B_{3}$. In fact, if let
\[
\varphi(z)=\left[
\begin{array}
[c]{ccc}%
0 & z_{1} & z_{2}\\
-z_{1} & 0 & z_{3}\\
-z_{2} & -z_{3} & 0
\end{array}
\right]
\]
then it is easy to verify that $\phi:B_{3}\rightarrow\mathbf{III}(3)$ is a
biholomorphic map.

\medskip

By Theorem \ref{t1.2} or the Graham's theorem on $B_{3}$, one has

\begin{corollary}
If $u\in C^{3}(\overline{\mathbf{III}(3)})$ is invariant harmonic then $u$ is
pluriharmonic in $\mathbf{III}(3)$.
\end{corollary}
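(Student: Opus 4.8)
The plan is to reduce the corollary to Graham's original theorem (Theorem \ref{t1.1}) on $B_3$ by transporting both the hypotheses and the conclusion through the explicit biholomorphism $\varphi\colon B_3\to\mathbf{III}(3)$ exhibited just above. Three standard facts will be used: (a) $\varphi$ is the restriction to $B_3$ of a linear (hence $C^\infty$) polynomial map of $\mathbb{C}^3$ into the space of $3\times 3$ antisymmetric matrices, and it carries $B_3$ biholomorphically onto $\mathbf{III}(3)$; (b) the Bergman metric is a biholomorphic invariant, so $\varphi^\ast g^{\mathbf{III}(3)}=g^{B_3}$ and the associated Laplace--Beltrami operators are intertwined by $\varphi$; and (c) composition with a holomorphic map preserves pluriharmonicity.

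First I would set $\tilde u:=u\circ\varphi$ on $B_3$. Since $\varphi$ is the restriction of a global polynomial map, it is smooth on $\overline{B_3}$, and by continuity $\varphi(\overline{B_3})$ is a compact set containing $\varphi(B_3)=\mathbf{III}(3)$ and contained in $\overline{\mathbf{III}(3)}$, hence $\varphi(\overline{B_3})=\overline{\mathbf{III}(3)}$; therefore the hypothesis $u\in C^3(\overline{\mathbf{III}(3)})$ yields $\tilde u\in C^3(\overline{B_3})$. By fact (b), the Laplace--Beltrami operators satisfy
\[
\Delta_{g^{B_3}}\tilde u=\bigl(\Delta_{g^{\mathbf{III}(3)}}u\bigr)\circ\varphi=0\quad\text{in }B_3,
\]
so $\tilde u$ is invariant harmonic in $B_3$.

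Next I would invoke Theorem \ref{t1.1}: since $\tilde u\in C^3(\overline{B_3})$ is invariant harmonic in $B_3$, it is pluriharmonic there. To descend back to $\mathbf{III}(3)$ I would apply the complex Hessian transformation rule already recorded in (\ref{e4.1}): writing $\varphi'$ for the holomorphic Jacobian of $\varphi$, one has $H_{\tilde u}(z)=\varphi'(z)\,H_u(\varphi(z))\,\varphi'(z)^\ast$ for all $z\in B_3$. Since $\varphi$ is biholomorphic, $\varphi'(z)$ is invertible at every $z$, so $H_{\tilde u}\equiv 0$ forces $H_u(\varphi(z))\equiv 0$; as $\varphi$ maps $B_3$ onto $\mathbf{III}(3)$, this gives $H_u\equiv 0$ on $\mathbf{III}(3)$, i.e. $u$ is pluriharmonic in $\mathbf{III}(3)$. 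Equivalently, one may simply note that $u=\tilde u\circ\varphi^{-1}$ and that pluriharmonicity is preserved by the holomorphic map $\varphi^{-1}$.

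I do not anticipate a genuine obstacle, since the whole argument is a change of variables. The only point that needs a word of justification --- that the $C^3$ regularity of $u$ up to the \v{S}ilov boundary of $\mathbf{III}(3)$ passes to $\tilde u$ up to $\partial B_3$ --- is immediate because $\varphi$ extends holomorphically (indeed polynomially) past $\overline{B_3}$, so the chain rule turns $C^3$ estimates on $u$ near $\overline{\mathbf{III}(3)}$ into $C^3$ estimates on $\tilde u$ near $\overline{B_3}$. The one item worth recording explicitly is the verification, already asserted above, that $z\mapsto\varphi(z)$ does map $B_3$ onto $\mathbf{III}(3)$ --- equivalently that $I_3-\varphi(z)\varphi(z)^\ast>0$ precisely when $|z|<1$ --- which is Lu's identification $\mathbf{III}(3)\cong B_3$ and amounts to a routine determinant computation.
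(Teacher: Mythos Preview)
Your proposal is correct and is essentially the same route the paper takes: the corollary is stated there as an immediate consequence of Graham's theorem on $B_3$ (equivalently, Theorem \ref{t1.2}(i) for $\mathbf{I}(1,3)$) via the explicit linear biholomorphism $\varphi\colon B_3\to\mathbf{III}(3)$, and you have simply written out the transport argument in full. One very small wording point: the regularity hypothesis $u\in C^3(\overline{\mathbf{III}(3)})$ refers to the full topological closure, not just the \v{S}ilov boundary, so your parenthetical about the \v{S}ilov boundary is unnecessary; the chain-rule justification you give already handles the entire closure.
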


\noindent\textbf{Second}, it is known from Lu\thinspace\cite{lu1963} that
$\mathbf{IV}(4)$ is biholomorphic to $\mathbf{I}(2,2)$. By Theorem \ref{t1.2},
one has the following corollary.

\begin{corollary}
If $u\in C^{2}(\overline{\mathbf{IV}(4)})$ is invariant harmonic in
$\mathbf{IV}(4)$ then $u$ is pluriharmonic in $\mathbf{IV}(4)$.
\end{corollary}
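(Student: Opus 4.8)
The plan is to transfer the statement to $\mathbf{I}(2,2)$ and then quote Part (i) of Theorem \ref{t1.2}. Recall from Lu \cite{lu1963} that $\mathbf{IV}(4)\cong\mathbf{I}(2,2)$; the point I would stress is that this identification may be realized by a $\mathbb{C}$-linear map $\Phi\colon\mathbb{C}^{4}\to M^{2,2}(\mathbb{C})$. Concretely one can take $\Phi(z)=\left(\begin{smallmatrix} z_{1}+iz_{2} & z_{3}+iz_{4}\\ -z_{3}+iz_{4} & z_{1}-iz_{2}\end{smallmatrix}\right)$, for which $\det\Phi(z)=\sum_{j=1}^{4}z_{j}^{2}=s(z)$ and $\operatorname{tr}(\Phi(z)\Phi(z)^{\ast})=2|z|^{2}$; comparing the defining inequalities of $\mathbf{IV}(4)$ in (\ref{e1.7}) with the conditions $\operatorname{tr}(ZZ^{\ast})<2$ and $1-\operatorname{tr}(ZZ^{\ast})+|\det Z|^{2}>0$ that characterize $I_{2}-ZZ^{\ast}>0$, one checks $\Phi(\mathbf{IV}(4))=\mathbf{I}(2,2)$. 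Since $\Phi$ is linear it is a biholomorphism of the ambient spaces, hence a diffeomorphism of $\overline{\mathbf{IV}(4)}$ onto $\overline{\mathbf{I}(2,2)}$; in particular, if $u\in C^{2}(\overline{\mathbf{IV}(4)})$ then $v:=u\circ\Phi^{-1}\in C^{2}(\overline{\mathbf{I}(2,2)})$, and this $C^{2}$ is precisely the regularity $C^{n}$ with $n=2$ demanded by Theorem \ref{t1.2}(i).

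Next I would invoke the biholomorphic invariance of the Bergman metric, so that $\Phi$ is an isometry between the two domains and therefore intertwines $\Delta_{g^{\mathbf{IV}(4)}}$ with $\Delta_{1}=\Delta_{g^{\mathbf{I}(2,2)}}$; thus $\Delta_{g^{\mathbf{IV}(4)}}u=0$ on $\mathbf{IV}(4)$ if and only if $\Delta_{1}v=0$ on $\mathbf{I}(2,2)$. Applying Theorem \ref{t1.2}(i) with $m=n=2$ gives that $v$ is pluriharmonic on $\mathbf{I}(2,2)$. Finally, pluriharmonicity is stable under precomposition with a holomorphic map: locally $v=\operatorname{Re}F$ with $F$ holomorphic, so $u=v\circ\Phi=\operatorname{Re}(F\circ\Phi)$ is locally the real part of a holomorphic function, hence pluriharmonic on $\mathbf{IV}(4)$. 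Equivalently, since $\Phi$ is linear and invertible, the complex Hessian transformation law (\ref{e4.1}) forces $H_{u}\equiv 0$ from $H_{v}\equiv 0$.

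There is no genuine analytic obstacle here; the only step that wants care is making the isomorphism $\mathbf{IV}(4)\cong\mathbf{I}(2,2)$ explicit enough to be certain it costs no derivatives up to the boundary, and the linear model above settles that immediately. If one prefers to bypass even this, an alternative is to use Proposition \ref{p4.1}: $\mathbf{IV}(4)$ is transitive and the family of $C^{2}$ invariant harmonic functions on it is $\operatorname{Aut}(\mathbf{IV}(4))$-invariant, so it suffices to verify that $\partial^{2}u/\partial z_{j}\partial\bar z_{k}$ vanishes at a single interior point, which follows from the corresponding vanishing for $v$ on $\mathbf{I}(2,2)$ (Theorem \ref{t4.3}) carried back through $\Phi$.
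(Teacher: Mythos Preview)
Your argument is correct and follows exactly the route the paper takes: reduce to $\mathbf{I}(2,2)$ via the biholomorphism $\mathbf{IV}(4)\cong\mathbf{I}(2,2)$ and invoke Theorem~\ref{t1.2}(i) with $m=n=2$. The paper in fact states the corollary with only that one-line justification, whereas you have supplied the details (an explicit linear model for $\Phi$, the transfer of $C^{2}$ regularity to the closure, and the invariance of both the Laplace--Beltrami operator and pluriharmonicity) that make the reduction rigorous.
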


\noindent\textbf{Finally}, it is known from Lu\thinspace\cite{lu1963} that
$\mathbf{IV}(2)$ is biholomorphic to the polydisc $D(0,1)^{2}$ in
$\mathbb{C}^{2}$. Moreover, one can verify that the following map
\begin{equation}
\left(  w_{1},w_{2}\right)  =\varphi\left(  z\right)  =\left(  z_{1}%
+iz_{2},z_{1}-iz_{2}\right)  :D(0,1)^{2}\rightarrow\mathbf{IV}(2) \label{l5.1}%
\end{equation}
is a biholomorphic map. Applying the result in Li-Simon \cite{li02}, one has
the following result.

\begin{corollary}
If $u\in C\left(  \overline{\mathbf{IV}\left(  2\right)  }\right)  $ is
invariant harmonic in $\mathbf{IV}(2)$ then

\begin{enumerate}
\item[(i)] $u$ is harmonic (in the regular sense) in $\mathbf{IV}(2)$;

\item[(ii)] $2\operatorname{Re}\frac{\partial^{2}u}{\partial w_{1}\partial
\bar{w}_{2}}=0$ in $\mathbf{IV}(2)$;

\item[(iii)] $u$ is not pluriharmonic in general.
\end{enumerate}
\end{corollary}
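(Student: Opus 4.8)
The plan is to transfer the whole question to the bidisc $D(0,1)^{2}$ by means of the linear biholomorphism $\varphi$ of (\ref{l5.1}) and to quote there the Graham-type theorem of Li and Simon \cite{li02}. Set $v:=u\circ\varphi$. Because $\varphi$ is a linear isomorphism of $\mathbb{C}^{2}$ it restricts to a homeomorphism $\overline{D(0,1)^{2}}\to\overline{\mathbf{IV}(2)}$, so $v\in C(\overline{D(0,1)^{2}})$; and because the Bergman metric --- hence the invariant Laplacian --- is a biholomorphic invariant, $v$ is invariant harmonic on $D(0,1)^{2}$. The Li--Simon theorem then asserts that such a $v$ is $n$-harmonic, i.e. is harmonic in $z_{1}$ and in $z_{2}$ separately:
\[
\frac{\partial^{2}v}{\partial z_{1}\partial\bar z_{1}}=\frac{\partial^{2}v}{\partial z_{2}\partial\bar z_{2}}=0\qquad\text{in }D(0,1)^{2}.
\]
No regularity beyond continuity up to the closure is needed for this: by Hua's theorem quoted in Section 1 the Dirichlet problem for the invariant Laplacian on the bidisc with data on the \v{S}ilov boundary $T^{2}$ has a unique solution, namely the product Poisson integral $\int_{T^{2}}P(z_{1},\zeta_{1})P(z_{2},\zeta_{2})\phi(\zeta)\,d\sigma(\zeta)$, which is harmonic in $z_{1}$ and in $z_{2}$ separately; so $v$ coincides with the product Poisson integral of $v|_{T^{2}}$.

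Next I would carry these two identities back to the coordinates $w$ of $\mathbf{IV}(2)$. Since $\varphi^{-1}(w)=\bigl(\tfrac12(w_{1}+w_{2}),\,-\tfrac{i}{2}(w_{1}-w_{2})\bigr)$ is linear and holomorphic, the chain rule gives $\partial_{w_{1}}=\tfrac12\partial_{z_{1}}-\tfrac{i}{2}\partial_{z_{2}}$, $\partial_{w_{2}}=\tfrac12\partial_{z_{1}}+\tfrac{i}{2}\partial_{z_{2}}$ and, by conjugation, $\bar\partial_{w_{1}}=\tfrac12\bar\partial_{z_{1}}+\tfrac{i}{2}\bar\partial_{z_{2}}$, $\bar\partial_{w_{2}}=\tfrac12\bar\partial_{z_{1}}-\tfrac{i}{2}\bar\partial_{z_{2}}$. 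Expanding and using the two vanishing second derivatives of $v$, the cross terms in the sum of the pure second derivatives cancel, leaving
\[
\frac{\partial^{2}u}{\partial w_{1}\partial\bar w_{1}}+\frac{\partial^{2}u}{\partial w_{2}\partial\bar w_{2}}=\tfrac12\Bigl(\frac{\partial^{2}v}{\partial z_{1}\partial\bar z_{1}}+\frac{\partial^{2}v}{\partial z_{2}\partial\bar z_{2}}\Bigr)=0,
\]
i.e. $u$ is (Euclidean-)harmonic on $\mathbf{IV}(2)$; this is (i). For the mixed derivative the two diagonal terms of $v$ drop out and
\[
\frac{\partial^{2}u}{\partial w_{1}\partial\bar w_{2}}=-\tfrac{i}{4}\Bigl(\frac{\partial^{2}v}{\partial z_{1}\partial\bar z_{2}}+\frac{\partial^{2}v}{\partial z_{2}\partial\bar z_{1}}\Bigr)=-\tfrac{i}{2}\,\operatorname{Re}\frac{\partial^{2}v}{\partial z_{1}\partial\bar z_{2}},
\]
which is purely imaginary since $v$ is real-valued; hence $2\operatorname{Re}\bigl(\partial^{2}u/\partial w_{1}\partial\bar w_{2}\bigr)=0$, which is (ii). (The same computation in fact shows that (i) and (ii) together are \emph{equivalent} to the $n$-harmonicity of $v$, so nothing is lost in the transfer.)

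For (iii) I would produce an explicit counterexample on the bidisc and push it forward. The polynomial $v_{0}(z)=\operatorname{Re}(z_{1}\bar z_{2})$ is harmonic in $z_{1}$ and in $z_{2}$ separately, hence invariant harmonic on $D(0,1)^{2}$, and it is smooth up to $\overline{D(0,1)^{2}}$; but $\partial^{2}v_{0}/\partial z_{1}\partial\bar z_{2}=\tfrac12\neq0$, so $v_{0}$ is not pluriharmonic. Transporting it by $\varphi$ gives $u_{0}:=v_{0}\circ\varphi^{-1}=\tfrac12\operatorname{Im}(w_{1}\bar w_{2})$, which is invariant harmonic on $\mathbf{IV}(2)$ and continuous (indeed smooth) up to the closure yet is not pluriharmonic, pluriharmonicity being a biholomorphic invariant. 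This shows the conclusion of Graham's theorem genuinely fails on $\mathbf{IV}(2)$.

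The routine part is the bookkeeping with the linear change of variables $\varphi$; the one point that deserves care --- and what I would regard as the main obstacle --- is fixing the precise form of the Li--Simon theorem being invoked, namely that it yields $n$-harmonicity of $v$ under mere continuity of $v$ up to the closure, with no $C^{k}$ hypothesis. The uniqueness statement for the Dirichlet problem on the bidisc recorded in the first paragraph is exactly what makes this legitimate.
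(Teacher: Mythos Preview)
Your proposal is correct and follows essentially the same route as the paper: transfer to the bidisc via the linear biholomorphism $\varphi$, invoke the Li--Simon result to obtain $\partial_{z_1}\bar\partial_{z_1}v=\partial_{z_2}\bar\partial_{z_2}v=0$, and then unwind the chain rule. The paper carries out the change of variables in the opposite direction (expressing $\partial_{z_j}\bar\partial_{z_j}v$ in terms of the $w$-derivatives of $u$ and then adding/subtracting), but the content is identical. Your counterexample $u_0=\tfrac{1}{2}\operatorname{Im}(w_1\bar w_2)$ differs from the paper's choice $|w_1|^2-|w_2|^2$, but both are valid; one small point worth noting is that your argument for (ii) tacitly assumes $u$ is real-valued when you pass from ``purely imaginary'' to $\operatorname{Re}(\partial_{w_1}\bar\partial_{w_2}u)=0$ --- the paper makes the same tacit assumption, and in any case the complex-valued statement follows by treating real and imaginary parts separately.
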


\begin{proof}
Since
\begin{equation}
z_{1}=\frac{w_{1}+w_{2}}{2},z_{2}=\frac{w_{1}-w_{2}}{2i} \label{e5.2}%
\end{equation}
and let
\begin{equation}
v(z)=u(w). \label{e5.3}%
\end{equation}
Then $v$ is invariant harmonic in $D(0,1)^{2}$ and continuous up to the
boundary. By the result in \cite{li02}, we have
\begin{equation}
\frac{\partial^{2}v}{\partial z_{1}\partial\bar{z}_{1}}=\frac{\partial^{2}%
v}{\partial z_{2}\partial\bar{z}_{2}}=0,\quad z\in D(0,1)^{2}. \label{e5.4}%
\end{equation}
Notice that%
\begin{equation}
4\frac{\partial^{2}v}{\partial z_{1}\partial\bar{z}_{1}}=\frac{\partial^{2}%
u}{\partial w_{1}\partial\bar{w}_{1}}+\frac{\partial^{2}u}{\partial
w_{1}\partial\bar{w}_{2}}+\frac{\partial^{2}u}{\partial w_{2}\partial\bar
{w}_{1}}+\frac{\partial^{2}u}{\partial w_{2}\partial\bar{w}_{2}} \label{e5.5}%
\end{equation}
and%
\begin{equation}
-4\frac{\partial^{2}v}{\partial z_{2}\partial\bar{z}_{2}}=\frac{\partial^{2}%
u}{\partial w_{1}\partial\bar{w}_{1}}-\frac{\partial^{2}u}{\partial
w_{1}\partial\bar{w}_{2}}-\frac{\partial^{2}u}{\partial w_{2}\partial\bar
{w}_{1}}+\frac{\partial^{2}u}{\partial w_{2}\partial\bar{w}_{2}}. \label{e5.6}%
\end{equation}
By $\left(  \ref{e5.3}\right)  -\left(  \ref{e5.6}\right)  $, one can easily
see Parts (i) and (ii) of the above corollary holds.

In order to prove Part (iii), we let
\begin{equation}
u(z_{1},z_{2})=|z_{1}|^{2}-|z_{2}|^{2}. \label{e5.7}%
\end{equation}
One can verify that $u$ is invariant harmonic in $\mathbf{IV}(2)$, but it is
clearly that $u$ is not pluriharmonic in $\mathbf{IV}(2)$. Therefore, the
proof of the corollary is complete.
\end{proof}

\bibliographystyle{acm}
\bibliography{chen-li}

\begin{thebibliography}{10}

\bibitem{ahern96}
{\sc Ahern, P., Bruna, J., and Cascante, C.}
\newblock {$H^p$}-theory for generalized {$M$}-harmonic functions in the unit
  ball.
\newblock {\em Indiana Univ. Math. J. 45}, 1 (1996), 103--135.

\bibitem{evans10}
{\sc Evans, L.~C.}
\newblock {\em Partial differential equations}, second~ed., vol.~19 of {\em
  Graduate Studies in Mathematics}.
\newblock American Mathematical Society, Providence, RI, 2010.

\bibitem{gilbarg83}
{\sc Gilbarg, D., and Trudinger, N.~S.}
\newblock {\em Elliptic partial differential equations of second order},
  second~ed., vol.~224 of {\em Grundlehren der Mathematischen Wissenschaften
  [Fundamental Principles of Mathematical Sciences]}.
\newblock Springer-Verlag, Berlin, 1983.

\bibitem{graham19831}
{\sc Graham, C.~R.}
\newblock The dirichlet problem for the bergman laplacian. 1.
\newblock {\em Communications in Partial Differential Equations 8}, 5 (1983),
  433--476.

\bibitem{graham19832}
{\sc Graham, C.~R.}
\newblock The dirichlet problem for the bergman laplacian. 2.
\newblock {\em Communications in Partial Differential Equations 8}, 6 (1983),
  563--641.

\bibitem{graham88}
{\sc Graham, C.~R., and Lee, J.~M.}
\newblock Smooth solutions of degenerate {L}aplacians on strictly pseudoconvex
  domains.
\newblock {\em Duke Math. J. 57}, 3 (1988), 697--720.

\bibitem{Hua19630}
{\sc Hua, L.~K.}
\newblock {\em Harmonic analysis of functions of several complex variables in
  the classical domains}.
\newblock Translated from the Russian by Leo Ebner and Adam Kor\'anyi. American
  Mathematical Society, Providence, R.I., 1963.

\bibitem{krantz92}
{\sc Krantz, S.~G.}
\newblock {\em Function theory of several complex variables}, second~ed.
\newblock The Wadsworth \& Brooks/Cole Mathematics Series. Wadsworth \&
  Brooks/Cole Advanced Books \& Software, Pacific Grove, CA, 1992.

\bibitem{li00}
{\sc Li, S.-Y., and Ni, L.}
\newblock On the holomorphicity of proper harmonic maps between unit balls with
  the {B}ergman metrics.
\newblock {\em Math. Ann. 316}, 2 (2000), 333--354.

\bibitem{li02}
{\sc Li, S.-Y., and Simon, E.}
\newblock Boundary behavior of harmonic functions in metrics of {B}ergman type
  on the polydisc.
\newblock {\em Amer. J. Math. 124}, 5 (2002), 1045--1057.

\bibitem{li10}
{\sc Li, S.-Y., and Wei, D.}
\newblock On the rigidity theorem for harmonic functions in {K}\"ahler metric
  of {B}ergman type.
\newblock {\em Sci. China Math. 53}, 3 (2010), 779--790.

\bibitem{liu04}
{\sc Liu, C., and Peng, L.}
\newblock Boundary regularity in the {D}irichlet problem for the invariant
  {L}aplacians {$\Delta_\gamma$} on the unit real ball.
\newblock {\em Proc. Amer. Math. Soc. 132}, 11 (2004), 3259--3268.

\bibitem{loos1977}
{\sc Loos, O.}
\newblock {\em Bounded symmetric domains and Jordan pairs}.
\newblock Lecture Notes, Univ. California at Irvine, 1977.

\bibitem{lu1963}
{\sc Lu, Q.-K.}
\newblock {\em Classical manifolds and classical domains}.
\newblock Shanghai Scientific and Technical press, Shanghai, 1963.

\bibitem{Rudin80}
{\sc Rudin, W.}
\newblock {\em Function theory in the unit ball of {${\bf C}^{n}$}}, vol.~241
  of {\em Grundlehren der Mathematischen Wissenschaften [Fundamental Principles
  of Mathematical Science]}.
\newblock Springer-Verlag, New York-Berlin, 1980.

\bibitem{Slater66}
{\sc Slater, L.~J.}
\newblock {\em Generalized hypergeometric functions}.
\newblock Cambridge University Press, Cambridge, 1966.

\end{thebibliography}

\end{document}